\newcommand\restr[2]{{
  \left.\kern-\nulldelimiterspace 
  #1 
  \vphantom{\big|} 
  \right|_{#2} 
  }} 
\newcommand{\comment}[1]{}
\def\N{\mathbb{N}} 
\def\Z{\mathbb{Z}}
\def\C{\mathbb{C}}
\def\topdf{\texorpdfstring}
\theoremstyle{plain}
\newtheorem{teo}[equation]{Theorem} 
\newtheorem{thm}[equation]{Theorem}
\newtheorem{lema}[equation]{Lemma}
\newtheorem{lem}[equation]{Lemma}
\newtheorem{coro}[equation]{Corollary} 
\newtheorem{prop}[equation]{Proposition}
\theoremstyle{definition}
\newtheorem{ex}[equation]{Example}
\theoremstyle{remark} 
 \newtheorem{rem}[equation]{Remark}
  \numberwithin{equation}{section}
\newcommand{\cE}{\mathcal E}
\newcommand{\cR}{\mathcal R}
\newcommand{\cV}{\mathcal V}
\def\fB{\mathfrak{B}}
\def\fX{\mathfrak{X}}
\def\grp{\mathfrak{Grp}}
\def\Alpha{\Lambda}
\newcommand{\aha}{{{\rm Alg}_\ell}}
\newcommand{\lra}{\longrightarrow}
\newcommand{\iso}{\overset{\sim}{\lra}}
\newcommand{\simh}{\approx}
\newcommand{\onto}{\twoheadrightarrow}
\def\reg{\operatorname{reg}}
\def\sing{\operatorname{sing}}
\def\sink{\operatorname{sink}}
\def\inf{\operatorname{inf}}
\def\sour{\operatorname{sour}}
\def\triqui{\vartriangleleft}
\def\mspan{\operatorname{span}}
\def\supp{\operatorname{supp}}
\def\inc{\operatorname{inc}}
\def\diag{\operatorname{diag}}
\def\Gl{\operatorname{GL}}
\def\GL{\Gl}
\def\ad{\operatorname{ad}}
\def\Idem{\operatorname{Idem}}
\def\ev{\operatorname{ev}}
\def\id{\operatorname{id}}
\newcommand{\coker}{{\rm Coker}}
\renewcommand{\ker}{{\rm Ker}}
\DeclareMathOperator*{\colim}{colim}
\def\Ext{\operatorname{Ext}}
\def\cExt{\mathcal{E}xt}
\def\Hom{\operatorname{Hom}}
\def\End{\operatorname{End}}
\title{Homotopy classification of Leavitt path algebras}
\date{}
\author{Guillermo Corti\~nas}
\author{Diego Montero}
\email{gcorti@dm.uba.ar, dmontero@dm.uba.ar}
\urladdr{http://mate.dm.uba.ar/\~{}gcorti}
\address{Dep. Matem\'atica-IMAS, FCEyN-UBA\\ Ciudad Universitaria Pab 1\\
C1428EGA Buenos Aires\\ Argentina}
\thanks{Both authors were supported by CONICET and by grants UBACyT 20021030100481BA and  PICT 2013-0454. Corti\~nas research was supported by grant MTM2015-65764-C3-1-P (Feder funds).}
\begin{document}

\begin{abstract}
In this paper we address the classification problem for purely infinite simple Leavitt path algebras of finite graphs over a field $\ell$.
Each graph $E$ has associated a Leavitt path $\ell$-algebra $L(E)$. There is an open question which asks whether the pair $(K_0(L(E)), 
[1_{L(E)}])$, consisting of the Grothendieck group together with the class $[1_{L(E)}]$ of the identity, is a complete invariant for the classification, up to algebra isomorphism, of those Leavitt path algebras of finite graphs which are purely infinite simple. We show that $(K_0(L(E)), [1_{L(E)}])$ is a complete invariant for the classification of such algebras up to polynomial homotopy equivalence. To prove this we further develop the study of bivariant algebraic $K$-theory of Leavitt path algebras started in a previous paper and obtain several other results of independent interest.  
\end{abstract}

\maketitle

\section{Introduction} 

A directed graph $E$ consists of a set $E^0$ of vertices and a set $E^1$ of edges together with source and range functions $r,s:E^1\to E^0$. This article is concerned with the Leavitt path algebra $L(E)$ of a directed graph $E$ over a field $\ell$ (\cite{libro}). When $\ell=\C$, $L(E)$ is a normed algebra; its completion is the graph $C^*$-algebra $C^*(E)$.  A graph $E$ is called finite or countable if both $E^0$ and $E^1$ are finite or countable. A result of Cuntz and R{\o}rdam (\cite{ror}*{Theorem 6.5}) says that the purely infinite simple graph algebras associated to finite graphs, i.e. the purely infinite simple Cuntz-Krieger algebras, are classified up to (stable) isomorphism by the Grothendieck group $K_0$. It is an open question whether a similar result holds for Leavitt path algebras \cite{alps}. Here we prove that $K_0$ classifies simple Leavitt path algebras up to ($M_2$-) homotopy equivalence. In the following theorem and elsewhere, we use the following notations. We write
$\iota_2:R\to M_2R$ for the inclusion of an algebra into the upper left hand corner of the matrix algebra, $\phi\simh \psi$ to indicate that two algebra homomorphisms $\phi$ and $\psi$ are (polynomially) homotopic and $\phi\simh_{M_2}\psi$ to mean that $\iota_2\phi\simh\iota_2\psi$. We also put $[1_R]$ for the $K_0$-class of the identity of a unital algebra $R$. In Theorem \ref{thm:main2} we prove the following.

\begin{thm}\label{intro:main}
Let $E$ and $F$ be finite graphs. Assume that $L(E)$ and $L(F)$ are purely infinite simple. Let 
$\xi:K_0(L(E))\to K_0(L(F))$ be an isomorphism of groups. Then there exist nonzero algebra homomorphisms $\phi:L(E)\leftrightarrow L(F):\psi$ such that 
$K_0(\phi)=\xi$, $K_0(\psi)=\xi^{-1}$, $\psi\phi\simh_{M_2} \id_{L(E)}$ and $\phi\psi\simh_{M_2}\id_{L(F)}$. If moreover $\xi([1_{L(E)}])=[1_{L(F)}]$ then $\phi$ and $\psi$ can be chosen to be unital homomorphisms such that $\psi\phi\simh \id_{L(E)}$ and $\phi\psi\simh \id_{L(F)}$. 
\end{thm}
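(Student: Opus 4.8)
The plan is to route the argument through the bivariant algebraic $K$-theory category $kk$ developed in the paper, lifting $\xi$ first to a bivariant equivalence and then to honest algebra homomorphisms, in the spirit of an algebraic Kirchberg--Phillips theorem.

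\textbf{Step 1: lift $\xi$ to a $kk$-equivalence.} Since $L(E)$ and $L(F)$ are purely infinite simple Leavitt path algebras of finite graphs, their bivariant $K$-groups are controlled by the graph data: $K_0$ and $K_1$ are the cokernel and kernel of $I-A^t$ for the relevant adjacency matrix, so $K_1(L(E))$ and $K_1(L(F))$ are free abelian of rank equal to the free rank of $K_0(L(E))$ and $K_0(L(F))$. I would invoke the universal coefficient sequence for $kk$, which exhibits $kk(L(E),L(F))$ as an extension of $\Hom(K_0(L(E)),K_0(L(F)))\oplus\Hom(K_1(L(E)),K_1(L(F)))$ by an $\Ext$ term, so every homomorphism pair on $K_*$ lifts. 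Choosing any isomorphism $\xi_1\colon K_1(L(E))\iso K_1(L(F))$ of these free groups of equal rank, the pair $(\xi,\xi_1)$ lifts to some $\Xi\in kk(L(E),L(F))$. As $\Xi$ induces isomorphisms on both $K_0$ and $K_1$, it is invertible in $kk$; write $\Xi^{-1}$ for its inverse.

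\textbf{Step 2: realize $\Xi$ and $\Xi^{-1}$ by algebra homomorphisms.} Here I would use the universal property of $L(E)$, whereby a homomorphism out of $L(E)$ is the same as a Cuntz--Krieger $E$-family in the target. Exploiting that $L(F)$ is purely infinite simple — so that it contains an abundance of infinite idempotents and partial isometries, and every prescribed $K_0$-class is realized by an idempotent — I would construct a nonzero homomorphism $\phi\colon L(E)\to L(F)$ whose class in $kk$ is $\Xi$, so that $K_0(\phi)=\xi$. Symmetrically one builds $\psi\colon L(F)\to L(E)$ with $kk$-class $\Xi^{-1}$, whence $K_0(\psi)=\xi^{-1}$.

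\textbf{Step 3: the $M_2$-homotopies.} The composite $\psi\phi$ induces the identity on $K_0(L(E))$ and on $K_1$, hence equals $\id_{L(E)}$ in $kk$. The crucial rigidity statement to establish is that an endomorphism of a purely infinite simple Leavitt path algebra that is the identity in $kk$ is $M_2$-homotopic to $\id$; applying it to $\psi\phi$ and $\phi\psi$ yields $\psi\phi\simh_{M_2}\id_{L(E)}$ and $\phi\psi\simh_{M_2}\id_{L(F)}$. The factor $M_2$ is forced because comparing an idempotent to its image and passing through the requisite conjugations naturally takes place after one amplification, while proper infiniteness of the algebra keeps the amplification bounded by $M_2$ rather than pushing it out to $M_\infty$. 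This realization-plus-rigidity package is the main obstacle: it is exactly the algebraic analogue of the Kirchberg--Phillips lifting, and the delicate bookkeeping is that which confines the stabilization to $M_2$. For the unital refinement, assume $\xi([1_{L(E)}])=[1_{L(F)}]$. Then $\phi(1_{L(E)})$ is an idempotent of $L(F)$ with $[\phi(1_{L(E)})]=\xi([1_{L(E)}])=[1_{L(F)}]$, and in a unital purely infinite simple algebra two idempotents with the same $K_0$-class are conjugate by an invertible element; conjugating $\phi$ accordingly I may assume $\phi(1_{L(E)})=1_{L(F)}$, i.e. $\phi$ is unital, and likewise arrange $\psi$ unital. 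Finally, proper infiniteness of the units lets one absorb the amplification, upgrading the $M_2$-homotopies to genuine homotopies $\psi\phi\simh\id_{L(E)}$ and $\phi\psi\simh\id_{L(F)}$.
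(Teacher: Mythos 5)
Your overall route is the paper's: lift $\xi$ to an invertible class in $kk(L(E),L(F))$ via the universal coefficient theorem, realize that class and its inverse by algebra homomorphisms, and deduce the homotopies from the injectivity half of the same lifting theorem (the paper's Theorem \ref{thm:kkliftr}). The ``realization-plus-rigidity package'' you isolate as the main obstacle is indeed the paper's main technical result, proved in Sections \ref{sec:k0k1lift} and \ref{sec:kklift} by adapting R\o rdam's $C^*$-arguments.

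The one step that fails as written is in your Step 1: in the algebraic setting $K_1(L(E))$ is \emph{not} free abelian of rank equal to the free rank of $K_0(L(E))$ --- it sits in an extension $0\to K_0(L(E))\otimes K_1(\ell)\to K_1(L(E))\to\ker(I-A_E^t)\to 0$ (Remark \ref{rem:xi0detxi1}), so it contains a copy of $K_0(L(E))\otimes\ell^\times$. Consequently you cannot choose an arbitrary isomorphism $\xi_1$ of the full $K_1$-groups and expect the pair $(\xi,\xi_1)$ to lift: for any $\Xi\in kk(L(E),L(F))$ the restriction of $K_1(\Xi)$ to $K_0(L(E))\otimes K_1(\ell)$ is forced to be $K_0(\Xi)\otimes\id$, so most pairs are not realizable. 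The repair is exactly what the paper does: its UCT (\cite{dwkk}*{Corollary 7.19}) is formulated in terms of the genuinely free group $\ker(I-A_E^t)$, onto which $K_1$ surjects via \eqref{map:elonto}; one takes $\xi_1:\ker(I-A_E^t)\to\ker(I-A_F^t)$ to be the isomorphism induced by $\xi$ on $K_0$ modulo torsion, lifts $(\xi,\xi_1)$, and then $K_1(\Xi)$ is automatically an isomorphism, so $\Xi$ is invertible by \cite{dwkk}*{Proposition 5.10}. A second, smaller imprecision: the upgrade from $M_2$-homotopy to genuine homotopy in the unital case is not an ``absorption of the amplification''; the paper obtains it because unital homomorphisms with equal $kk$-class are unitally $\ad$-homotopic (Theorem \ref{thm:kkliftr} and Remark \ref{rem:adhomo}), which shows $\psi$ is a homotopy equivalence, after which one replaces $\phi$ by the homotopy inverse of $\psi$.
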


We also prove other results which we think are of independent interest. For example we have the following embedding theorem, proved in  Corollary \ref{coro:tododentro}.

\begin{thm}\label{thm:embed} Let $E$ be a graph such that $L(E)$ is simple and let $R$ be a unital purely infinite algebra. 
\item[i)] If $E$ is countable then $L(E)$ embeds as a subalgebra of $M_\infty R$.
\item[ii)] If $E$ is finite and $[1_R]=0$ in $K_0(R)$, then $L(E)$ embeds as a unital subalgebra of $R$. 
\item[iii)] If $E$ is finite, then $L(E)$ embeds as a subalgebra of $R$. 
\end{thm}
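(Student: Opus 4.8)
The plan is to reduce all three statements to the construction of a suitable nonzero Cuntz--Krieger $E$-family in the target algebra. Recall that $L(E)$ is the universal $\ell$-algebra generated by orthogonal idempotents $\{p_v\}_{v\in E^0}$ and elements $\{t_e,t_{e^*}\}_{e\in E^1}$ subject to the Cuntz--Krieger relations; any family $\{q_v\},\{s_e,s_{e^*}\}$ satisfying these relations in an algebra $A$ induces a unique homomorphism $\phi\colon L(E)\to A$. Since $L(E)$ is simple, $\ker\phi$ is either $0$ or all of $L(E)$, so $\phi$ is injective as soon as it is nonzero, i.e. as soon as some $q_v\neq 0$. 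Thus in each case it suffices to realize such a family, and the problem becomes one about idempotents: we must produce mutually orthogonal idempotents $\{q_v\}$ together with Murray--von Neumann equivalences $f_e:=s_es_{e^*}\sim q_{r(e)}$, with $f_e\le q_{s(e)}$, the $f_e$ pairwise orthogonal, and $q_v=\sum_{s(e)=v}f_e$ for every regular vertex $v$. Taking $K_0$-classes, such a family forces the relations $[q_v]=\sum_{s(e)=v}[q_{r(e)}]$, so the construction amounts to solving this system in $K_0(R)$ and then realizing a chosen solution by honest orthogonal idempotents.

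The engine is the hypothesis that $R$ is purely infinite: every nonzero idempotent of $R$ is properly infinite, two full properly infinite idempotents are Murray--von Neumann equivalent precisely when their $K_0$-classes agree, and a properly infinite idempotent $e$ can be split as an orthogonal sum $e=g_1+\dots+g_k$ with prescribed classes $[g_i]$ whenever $[e]=\sum_i[g_i]$ in $K_0(R)$. The key observation for parts (i) and (iii) is that the system $[q_v]=\sum_{s(e)=v}[q_{r(e)}]$ always admits the trivial solution $[q_v]=0$, and that nonzero class-zero idempotents exist: since $1_R$ is infinite there is $e\sim 1_R$ with $e\neq 1_R$, whence $h:=1_R-e$ is a nonzero idempotent with $[h]=0$. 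I would therefore realize every $q_v$ of class $0$ and perform all the required splittings and equivalences inside class-zero properly infinite corners. For part (i) the algebra $M_\infty R$ provides countably many orthogonal class-zero slots, which gives enough room to treat an arbitrary countable $E$; note that at infinite emitters there is no summation relation to satisfy, so only the (finite) regular-vertex relations must be realized. For part (iii) the same construction, carried out inside a single class-zero corner, yields a (non-unital) embedding $L(E)\hookrightarrow R$ with no constraint on $[1_R]$.

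The delicate case is (ii), where we additionally demand that $\phi$ be unital, i.e. $\sum_{v}q_v=1_R$; taking classes this forces $\sum_v[q_v]=[1_R]$. Again I would use the class-zero solution $[q_v]=0$, which is consistent precisely because the hypothesis gives $[1_R]=0=\sum_v[q_v]$. The point where the argument genuinely bites --- and the main obstacle --- is that one must now decompose the \emph{given} unit $1_R$ itself, rather than merely some subidempotent, into the vertex idempotents $\{q_v\}$ while simultaneously respecting every Cuntz--Krieger relation. Concretely, after building a nonzero family with support idempotent $p=\sum_v q_v\le 1_R$, one has to promote it to one with support exactly $1_R$; this is possible because $[p]=0=[1_R]$ and, $L(E)$ being simple, $p$ is full, so the realization lemma identifies $p$ with $1_R$ up to equivalence and lets us transport the family. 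This is the algebraic counterpart of Kirchberg's $\mathcal O_2$-embedding theorem, the condition $[1_R]=0$ playing the role of the vanishing of $[1_{\mathcal O_2}]$ in $K_0$; establishing the unital realization is where the bulk of the work lies, whereas (i) and (iii), having no unitality requirement, follow from the roomier non-unital and stable versions of the same construction.
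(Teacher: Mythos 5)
Your proposal is correct and follows essentially the same route as the paper: the paper deduces the theorem by applying its $K_0$-lifting result (Theorem \ref{thm:k0liftr}) to the trivial homomorphism $\xi=0$, and the proof of that theorem is exactly your construction --- realize a Cuntz--Krieger $E$-family out of nonzero class-zero orthogonal idempotents via the Ara--Goodearl--Pardo identification $K_0(R)=\cV(R)\setminus\{0\}$, use simplicity of $L(E)$ for injectivity, and in the unital case transport the family along an MvN equivalence $\psi(1)\sim 1$. The only cosmetic difference is that the paper handles infinite emitters by desingularization rather than by directly checking that no summation relation is imposed there.
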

For particular $R$, we have the following result on uniqueness up to homotopy for embeddings into $R$. In the next theorem and elsewhere, we write $[A,R]$ and 
$[A,R]_{M_2}$ for the set of homotopy classes and $M_2$-homotopy classes of homomorphisms $A\to R$. If moreover, $A$ and $R$ are unital, we write $[A,R]_1$ for the set of homotopy classes of unital homomorphisms $A\to R$. In the next theorem and elsewhere we use the notion of regular supercoherent ring from \cite{gersten}. For example, $L(E)$ is regular supercoherent for every finite graph $E$ (\cite{libro}*{Lemma 6.4.16}). We write $L_n$ for the Leavitt path algebra of the one-vertex graph with $n$ loops.

\begin{thm}\label{thm:embel2}
Let $E$ be finite graph such that $L(E)$ is simple and $R$ a purely infinite simple, regular supercoherent unital algebra. Then  \  $[L(E),L_2]_1= [L(E),L_2]_{M_2}\setminus\{0\}$, $[L(E),R\otimes L_2]_1=[L(E),R\otimes L_2]_{M_2}\setminus\{0\}$, and both sets have exactly one element each.  
\end{thm}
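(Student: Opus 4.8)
The plan is to reduce the statement to one bivariant-$K$-theoretic computation together with the embedding theorem. First I would check that both targets are $kk$-trivial. By the fundamental sequence for Leavitt path algebras developed in the previous paper, $L(E)$ is $kk$-equivalent to the cone of $1-A_E^{t}\colon\ell^{E^0}\to\ell^{E^0}$; for the one-vertex two-loop graph defining $L_2$ this map is the isomorphism $-1\colon\ell\to\ell$, whose cone vanishes, so $L_2\simeq 0$ in $kk$ (equivalently $K_0(L_2)=\Z/(2-1)=0$ and $K_1(L_2)=0$). Since $-\otimes L_2$ is a tensor endofunctor of $kk$, also $R\otimes L_2\simeq 0$. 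Hence $kk(L(E),L_2)=kk(L(E),R\otimes L_2)=0$, and in particular $[1_{L_2}]=0$ and $[1_{R\otimes L_2}]=0$ (the latter also directly from $1\sim 1\oplus 1$ in $L_2$). When $L(E)\cong M_n(\ell)$ the same vanishing holds by Morita invariance of $kk$ together with $kk(\ell,B)=K_0(B)$. Throughout, both targets $B\in\{L_2,\,R\otimes L_2\}$ are unital, purely infinite simple and regular supercoherent, the tensor product inheriting these properties from its factors.

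Next I would establish existence and $M_2$-uniqueness. Since each $B$ is unital and purely infinite with $[1_B]=0$, Theorem \ref{thm:embed}(ii) provides a unital embedding $L(E)\hookrightarrow B$, so $[L(E),L_2]_1$ and $[L(E),R\otimes L_2]_1$ are nonempty and consist of nonzero homomorphisms. For uniqueness I would use the map-level statement underlying Theorem \ref{intro:main}: for $L(E)$ simple and $B$ purely infinite simple regular supercoherent unital, two nonzero homomorphisms $L(E)\to B$ inducing the same class in $kk(L(E),B)$ are $M_2$-homotopic, and every class is realized. Here simplicity of $L(E)$ forces every nonzero homomorphism to be injective, so ``nonzero'' is the correct nondegeneracy hypothesis; this also covers $L(E)\cong M_n(\ell)$, which is simple but not purely infinite. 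As $kk(L(E),B)=0$, there is a single nonzero $M_2$-homotopy class, i.e. $[L(E),B]_{M_2}\setminus\{0\}$ is a singleton.

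Finally comes the unital refinement, which I expect to be the main obstacle. The forgetful map $[L(E),B]_1\to[L(E),B]_{M_2}$ lands in the nonzero part (a unital homomorphism sends $1\mapsto 1\ne 0$) and, by the previous step, hits the unique nonzero class; what remains is its injectivity, i.e. that any two unital homomorphisms $f,g\colon L(E)\to B$ are genuinely homotopic rather than merely $M_2$-homotopic. This is precisely where the absorbing algebra $L_2$ is used: its defining relation realizes $1_B\sim 1_B\oplus 1_B$, which lets one absorb the single factor of $M_2$ and transport a homotopy $\iota_2 f\simh\iota_2 g$ back to a unital homotopy $f\simh g$ in $B$, in parallel with the unital conclusion of Theorem \ref{intro:main}. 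The delicate feature is that everything must be controlled at the single-stabilization level $M_2$ rather than $M_\infty$: it is exactly this restriction that keeps the zero homomorphism in its own class, since over $M_\infty$ the vanishing of $[1_B]$ would allow a unital map to be contracted to $0$ and the symbol ``$\setminus\{0\}$'' would be empty. Granting the unital upgrade, $[L(E),B]_1$ is a singleton that the forgetful map identifies with $[L(E),B]_{M_2}\setminus\{0\}$, yielding both displayed equalities and the assertion that each of the two sets has exactly one element.
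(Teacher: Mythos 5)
Your overall architecture matches the paper's: you identify $j(L_2)=j(R\otimes L_2)=0$ in $kk$, get existence of a unital homomorphism from the embedding theorem (Corollary \ref{coro:tododentro}, using $[1_B]=0$), and get uniqueness of the nonzero $M_2$-homotopy class from the lifting theorem $[L(E),B]_{M_2}\setminus\{0\}\cong kk(L(E),B)=0$ together with the fact that only the zero map is homotopic to zero. Up to that point the argument is sound (modulo the side remarks that pure infinite simplicity of $R\otimes L_2$ is not formal --- the paper invokes centrality of $L_2$ and \cite{apgpsm}*{Theorem 7.9} --- and that what is actually needed and proved is $K$-regularity of $R\otimes L_2$ via Lemma \ref{lem:kreg}, not regular supercoherence of the tensor product).

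The gap is in the unital refinement, which you correctly flag as the main obstacle but then resolve by an argument that does not work. Compressing a homotopy $\iota_2 f\simh\iota_2 g$ through $\ad(x,y):M_2B\to\iota_2(B)$ produces a homotopy between $\ad(x_1,y_1)f$ and $\ad(x_1,y_1)g$, i.e.\ between the compressions of $f$ and $g$ into the proper corner $x_1y_1\,B\,x_1y_1$; these are not unital, and the inverse map $\ad(y_1,x_1)$ is a homomorphism only on that corner, through which the compressed homotopy does not pass. So ``$1_B\sim 1_B\oplus 1_B$ absorbs the $M_2$'' does not transport the homotopy back to a unital one --- indeed $f\simh_{M_2}g$ does not imply $f\simh g$ for unital maps into a general $C_2$-algebra. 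The paper's mechanism is different and uses two further ingredients: (a) Theorem \ref{thm:kkliftr}(ii), which says that unital homomorphisms with $j(\phi)=j(\psi)$ are unitally $3$-step $\ad$-homotopic, each step being either a genuine polynomial homotopy or conjugation by a unit $u\in U(B)$ (Remark \ref{rem:adhomo}); and (b) the vanishing $\pi_0(U(B))\cong KV_1(B)\cong K_1(B)=0$, which holds because $B$ is purely infinite simple and $K_1$-regular (Proposition \ref{prop:kv1pis}, Corollary \ref{coro:kv1pis}, Lemma \ref{lem:kreg}) and forces every such conjugation $\ad(u)$ to be homotopic to the identity through unital maps $\ad(u(t))$. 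Your proposal never invokes $\pi_0(U(B))=0$ at this point, and without it the conjugation steps cannot be removed; the non--purely infinite case $L(E)\cong M_n$ needs the same fact via Proposition \ref{prop:homotopis} (homotopic iff conjugate). Appealing to ``the unital conclusion of Theorem \ref{intro:main}'' does not substitute for this, both because that theorem concerns $kk$-equivalences between two purely infinite simple Leavitt path algebras and because its unital conclusion rests on exactly the same two ingredients.
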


In particular, Theorem \ref{thm:embel2} implies that if $d:L_2\to L_2\otimes L_2$, $d(x)=1\otimes x$ and  $\phi: L_2\to L_2\otimes L_2$ is a nonzero homomorphism, then $\phi\simh_{M_2}d$ and that if $\phi$ is unital then $\phi\simh d$.

In \cite{dwkk} we introduced, for an algebra $A$ and a unital algebra $R$, an abelian monoid of homotopy classes of extensions of $A$ by $M_\infty R$, and considered its group completion $\cExt(A,R)$. We showed in \cite{dwkk}*{Remark 5.8} that if $E$ is a graph such that $E^0$ is finite and $E^1$ is countable, then for the bivariant algebraic $K$-theory group $kk_n(A,R)$ of \cite{kkwt}, there is a natural map
\begin{equation}\label{intro:mapextkk}
\cExt(L(E),R)\to kk_{-1}(L(E),  R).
\end{equation}
Recall that a ring $R$ is \emph{$K_n$-regular} if the canonical map $K_n(R)\to K_n(R[t_1,\dots,t_m])$ is an isomorphism for every $m$. For example, every Leavitt path algebra is $K_n$-regular for all $n\in\Z$, by \cite{dwkk}*{Example 5.5}. 
 
\begin{thm}\label{intro:ext}
Let $E$ be a finite graph such that $L(E)$ is simple. Let $R$ be either a division algebra or a $K_0$-regular purely infinite simple unital algebra. Then the natural map \eqref{intro:mapextkk} is an isomorphism 
\[
\cExt(L(E),R)\iso kk_{-1}(L(E),R).
\]
\end{thm}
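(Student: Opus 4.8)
The plan is to compute $kk_{-1}(L(E),R)$ by means of the fundamental distinguished triangle of $L(E)$ in the triangulated category $kk$, to compute $\cExt(L(E),R)$ through the universal extension of $L(E)$, and to check that the natural map \eqref{intro:mapextkk} matches the two descriptions. Recall from \cite{dwkk} that for a finite graph $E$ with $L(E)$ simple there is a distinguished triangle
\[
\ell^{E^0}\xrightarrow{\ N_E\ }\ell^{E^0}\longrightarrow L(E)\longrightarrow \Omega^{-1}\ell^{E^0}
\]
in $kk$, in which $N_E$ is the endomorphism determined by $I-A_E^t$, so that $\coker N_E=K_0(L(E))$ and $\ker N_E=K_1(L(E))$ on $\Z^{E^0}$. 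Applying the cohomological functor $kk_*(-,R)$ and using $kk_n(\ell^{E^0},R)=KH_n(R)^{E^0}$ (because $kk_n(\ell,-)=KH_n$), I obtain the short exact sequence
\[
0\to \coker\bigl(N_E\mid KH_0(R)^{E^0}\bigr)\xrightarrow{\ \partial\ } kk_{-1}(L(E),R)\to \ker\bigl(N_E\mid KH_{-1}(R)^{E^0}\bigr)\to 0.
\]

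First I would reduce the left-hand term to ordinary $K$-theory. Since $R$ is $K_0$-regular (a division algebra being regular, hence $K_n$-regular for all $n$), the comparison map $K_0(R)\to KH_0(R)$ is an isomorphism, whence $\coker(N_E\mid KH_0(R)^{E^0})=\coker(N_E\mid K_0(R)^{E^0})$. Next I would annihilate the right-hand term. By Vorst's theorem $K_0$-regularity implies $K_{-1}$-regularity, so $KH_{-1}(R)=K_{-1}(R)$; it therefore suffices to know $K_{-1}(R)=0$. For a division algebra this is immediate. For a $K_0$-regular purely infinite simple unital $R$ one must establish $K_{-1}(R)=0$ separately, using the proper infiniteness of $R$. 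Granting this vanishing, the kernel term dies and
\[
kk_{-1}(L(E),R)\;\cong\;\coker\bigl(N_E\mid K_0(R)^{E^0}\bigr).
\]

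It remains to identify $\cExt(L(E),R)$ with the same cokernel compatibly with \eqref{intro:mapextkk}. Here I would use that the monoid of extensions of $L(E)$ by $M_\infty R$ is governed by the universal (Cohn-type) extension of $L(E)$: an arbitrary extension is, up to homotopy, a pushout of the universal one along a homomorphism of $M_\infty R$, and such pushouts are classified by $K_0(R)$-data indexed by the vertices $E^0$, subject precisely to the relations encoded by $N_E$. This realizes $\cExt(L(E),R)=\coker(N_E\mid K_0(R)^{E^0})$. By the construction of the map in \cite{dwkk}*{Remark 5.8} and naturality of the connecting map $\partial$ above, \eqref{intro:mapextkk} carries the class of such a pushout to the corresponding element $\partial(x)$ of the cokernel term; as both groups are now the same cokernel and the map induces the identity on it, \eqref{intro:mapextkk} is an isomorphism.

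The step I expect to be the main obstacle is the vanishing of $\ker(N_E\mid KH_{-1}(R)^{E^0})$ in the purely infinite simple case. One cannot simply invoke injectivity of $N_E$, since $N_E=I-A_E^t$ is typically not injective over $\Z$ — indeed $\ker N_E=K_1(L(E))$ may be nonzero — so the kernel term can only be killed by proving $K_{-1}(R)=0$ for a $K_0$-regular purely infinite simple unital $R$. A secondary technical point, to be handled via the explicit extension-theoretic description of \cite{dwkk}, is the precise identification of $\cExt(L(E),R)$ with the cokernel and the verification that \eqref{intro:mapextkk} respects it.
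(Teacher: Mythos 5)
Your strategy diverges completely from the paper's, and it has a genuine gap at exactly the point you flag as the ``main obstacle'': the vanishing of $K_{-1}(R)$ for a $K_0$-regular purely infinite simple unital algebra $R$ is not a known fact and is not something the paper assumes or proves. On the contrary, Corollary \ref{coro:cute} of the paper exhibits $\cExt(L(E),R)$ as an extension whose quotient contains a summand $\Hom_\Z(K_0(L(E)),K_{-1}R)$ for precisely this class of $R$, and Example \ref{ex:cute} only asserts $K_{-1}R=0$ in the special cases $R=\ell$ and $R$ a purely infinite simple unital Leavitt path algebra. So your plan to identify \emph{both} sides with $\coker\bigl(N_E\mid K_0(R)^{E^0}\bigr)$ cannot work: in general neither group equals that cokernel, and the correct statement must carry the $K_{-1}(R)$-contribution through on both sides. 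This also undermines your computation of $\cExt(L(E),R)$: the claim that every extension of $L(E)$ by $M_\infty R$ is, up to homotopy, a pushout of a universal extension classified by $K_0(R)$-data indexed by $E^0$ is not argued (it is essentially the surjectivity you are trying to prove), and if it produced only the cokernel term it would be false whenever $K_{-1}(R)\neq 0$. A smaller point: the presentation triangle has $\ell^{\reg(E)}$, not $\ell^{E^0}$, as its first term (harmless when $L(E)$ is purely infinite simple, since then there are no sinks), and $\ker(I-A_E^t)$ is only a quotient of $K_1(L(E))$, not all of it, because $K_1(\ell)=\ell^\times\neq 0$.

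The paper's route avoids all of this. It realizes $\cExt(L(E),R)=[L(E),\Sigma R]_{M_2}$ for the Wagoner suspension $\Sigma R$ and $kk_{-1}(L(E),R)=kk(L(E),\Sigma R)$, proves that $\Sigma R$ and $\Sigma'R$ are themselves purely infinite simple (Proposition \ref{prop:wagopis}, the one genuinely new ingredient), and then applies the already-established lifting theorem (Theorem \ref{thm:kkliftr}) with target $\Sigma R$; see Theorem \ref{thm:ext}. In particular no computation of either group is needed, and the $K_{-1}(R)$ term is carried along automatically. If you want to salvage your approach, you would need to (a) establish the exact sequence for $\cExt(L(E),R)$ independently --- which is essentially what Theorem \ref{thm:ext} plus \cite{dwkk}*{Corollary 7.19} give after the fact, not before --- and (b) either prove $K_{-1}(R)=0$ (unavailable in this generality) or run a five-lemma comparison of the two exact sequences, for which you would still need surjectivity and injectivity statements equivalent to the theorem itself.
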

Combining Theorem \ref{intro:ext} with results from our previous paper \cite{dwkk} we are able to compute $\cExt(L(E), R)$ in some cases. For example, we get that if $E$ is as in Theorem \ref{intro:ext}, $\reg(E)=E^0\setminus\sink(E)$ and $I$ and $A_E\in\Z^{\reg(E)\times E^0}$ are the identity and the incidence matrices with the rows corresponding to sinks removed, then
\begin{equation}\label{intro:ckext}
\cExt(L(E),\ell)=\coker(I-A_E).
\end{equation} 
If moreover $K_0(L(E))$ is torsion, then for every $R$ as in Theorem \ref{intro:ext} (in particular, for $R=\ell$ and for every purely infinite simple unital Leavitt path algebra $R$), we have 
\begin{equation}\label{intro:extext}
\cExt(L(E),R)=\Ext^1_\Z(K_0(L(E)),K_0(R)).
\end{equation}

The following theorem is the main technical result of the paper; it is key for the proof of Theorems \ref{intro:main}, \ref{thm:embel2} and \ref{intro:ext}.

\begin{thm}\label{intro:kklift}
Let $E$ be a finite graph such that $L(E)$ is simple and $R$ a purely infinite simple unital algebra. Assume that $R$ is $K_1$-regular. Then
the canonical map
\[
[L(E),R]_{M_2}\setminus\{0\}\to kk(L(E),R)
\]
is an isomorphism of monoids. 
\end{thm}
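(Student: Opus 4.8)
The plan is to first upgrade the canonical map to a morphism of monoids, then compute the target homologically inside $kk$, and finally prove bijectivity by separating a comparatively soft surjectivity statement from a hard uniqueness statement modelled on the Kirchberg--Phillips theorem. For the monoid structure on the source, I would use that a purely infinite simple unital $R$ has $1_R$ properly infinite: there are $a_1,a_2,b_1,b_2\in R$ with $b_ia_j=\delta_{ij}1$ and with $a_1b_1,a_2b_2$ orthogonal idempotents. These elements define the Cuntz sum $\phi\oplus\psi:=a_1\phi(-)b_1+a_2\psi(-)b_2$, a homomorphism because $b_1a_2=b_2a_1=0$ kills the cross terms. A standard connectedness argument shows $\oplus$ is well defined on $\simh_{M_2}$-classes and makes $[L(E),R]_{M_2}$ an abelian monoid. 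Since $L(E)$ is simple, a sum of nonzero maps is nonzero, so $[L(E),R]_{M_2}\setminus\{0\}$ is a subsemigroup. Because $kk$ is homotopy invariant, $M_2$-stable and additive, $\phi\mapsto[\phi]\in kk(L(E),R)$ is constant on $\simh_{M_2}$-classes and turns Cuntz sums into sums; hence the map is a homomorphism of monoids. The identity of the restricted monoid will turn out to be a \emph{nonzero} homomorphism with vanishing $kk$-class, whose existence is produced by the surjectivity argument.

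Next I would compute the target. Recall from \cite{dwkk} that $L(E)$ sits in a distinguished triangle
\[
\ell^{\reg(E)}\xrightarrow{\,I-A_E^t\,}\elld\longrightarrow L(E)\longrightarrow \Omega^{-1}\ell^{\reg(E)}
\]
in $kk$. Applying the contravariant functor $kk(-,R)$, using additivity and the identification $kk_n(\ell,R)=KH_n(R)$ of \cite{kkwt,dwkk}, produces the long exact sequence
\[
K_1(R)^{E^0}\xrightarrow{I-A_E}K_1(R)^{\reg(E)}\to kk(L(E),R)\to K_0(R)^{E^0}\xrightarrow{I-A_E}K_0(R)^{\reg(E)}.
\]
Here I have used that, by Vorst's theorem, $K_1$-regularity of $R$ forces $K_0$-regularity, so that $KH_i(R)=K_i(R)$ for $i=0,1$. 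Thus $kk(L(E),R)$ is an extension of $\ker\!\bigl(I-A_E\colon K_0(R)^{E^0}\to K_0(R)^{\reg(E)}\bigr)$ by $\coker\!\bigl(I-A_E\colon K_1(R)^{E^0}\to K_1(R)^{\reg(E)}\bigr)$; this is the group against which homomorphisms must be matched, and it is compatible with the formula \eqref{intro:ckext}.

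For surjectivity I would bootstrap from the vertex case. When $E$ is a single vertex, $L(E)=\ell$ and a homomorphism is the choice of an idempotent, so $[\ell,R]_{M_2}$ is the monoid $V(R)$ of idempotents in $M_\infty R$ up to homotopy; for purely infinite simple $R$ the nonzero part of $V(R)$ is exactly $K_0(R)=kk(\ell,R)$, by Cuntz's theorem. In general the surjection $\elld\onto L(E)$ of the triangle lets me push idempotents of $M_\infty R$ representing a class in $K_0(R)^{E^0}$ forward to genuine homomorphisms $L(E)\to R$; exactness of the long exact sequence then shows that every class of the kernel subgroup is realized, and the part coming from the $K_1$-cokernel is reached by precomposing/conjugating with units, again using pure infiniteness of $R$. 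Theorem \ref{thm:embed} supplies at least one nonzero homomorphism to anchor the construction, and additivity propagates realizability across the whole group; in particular $0\in kk(L(E),R)$ is hit by a nonzero class, which is the sought neutral element $\tau$.

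The hard part will be injectivity, i.e. the uniqueness assertion that two nonzero homomorphisms with equal $kk$-class are $M_2$-homotopic. Using the additive structure this reduces to the single statement that a nonzero $\phi\colon L(E)\to R$ with $[\phi]=0$ is $M_2$-homotopic to the absorbing trivial homomorphism $\tau$; this is the algebraic analogue of the Kirchberg--Phillips uniqueness theorem and is where I expect the real work to lie. I would transport the vanishing of $[\phi]$ through the triangle to information about the generating idempotents $p_v$ and partial isometries $s_e$ of $L(E)$: the hypothesis forces the idempotents $\phi(p_v)$ to be $K_0$-trivial, hence homotopic in $M_\infty R$ to those defining $\tau$ by Cuntz's theorem, while the remaining freedom in the $\phi(s_e)$ can be absorbed by the infinite repeat available after $M_2$-stabilization. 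The delicate step is to assemble these separate homotopies of idempotents and partial isometries into one homotopy of \emph{algebra homomorphisms} that respects all Leavitt relations simultaneously along the path; pure infiniteness of $R$ provides the room to correct the relations, and the regular supercoherence of $L(E)$ (from \cite{libro,gersten}) supplies the homotopy invariance needed to conclude. Combining this uniqueness with the surjectivity above yields the desired isomorphism of monoids.
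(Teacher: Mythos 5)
Your overall architecture matches the paper's: the monoid structure via the Cuntz sum coming from the $C_2$-algebra structure of $R$, the computation of $kk(L(E),R)$ from the triangle $\ell^{\reg(E)}\to\ell^{E^0}\to L(E)$, surjectivity by lifting $K$-theoretic data to homomorphisms using the Ara--Goodearl--Pardo identification $K_0(R)=\cV(R)\setminus\{0\}$, and a hard uniqueness step. But the uniqueness step --- which you correctly identify as the real content --- is not proved, and the sketch you give for it has two genuine gaps. First, your reduction of injectivity to ``the fiber over $0$ is the single class $\tau$'' is circular as stated: to cancel the auxiliary map $\chi$ with $\phi\oplus\chi\simh_{M_2}\tau\simh_{M_2}\psi\oplus\chi$ you need $\tau$ to be a \emph{neutral} element, i.e.\ $\tau\oplus\phi\simh_{M_2}\phi$ for all $\phi$; that absorption statement is of essentially the same difficulty as the uniqueness theorem itself and cannot be assumed. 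The paper avoids this by proving directly that $j(\phi)=j(\psi)$ implies $\phi$ and $\psi$ are ($3$-step) $\ad$-homotopic. Second, the tools you invoke to ``assemble the homotopies'' are not the right ones: $M_2$-stabilization provides no ``infinite repeat'' (an Eilenberg-swindle argument would collapse the whole monoid, contradicting the theorem), and regular supercoherence of $L(E)$ plays no role in this proof. What actually makes the argument work, and what is entirely absent from your proposal, is the mechanism by which the hypothesis of $K_1$-regularity of $R$ enters: the paper proves (Proposition \ref{prop:kv1pis} and Corollary \ref{coro:kv1pis}) that for a $K_1$-regular purely infinite simple unital $R$ one has $K_1(R)=\pi_0(U(R))$, so that an equality of $K_1$-classes of units can be upgraded to a polynomial path of units, hence to a homotopy of homomorphisms.

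Concretely, after conjugating so that $\phi$ and $\psi$ agree on the diagonal subalgebra $DL(E)$ (possible since $K_0(\phi)=K_0(\psi)$), their difference is encoded in the single unit $u=\sum_{e}\psi(e)\phi(e^*)\in U(R_\phi)$; Lemma \ref{lem:aux} computes $K_1(\psi)\gamma-K_1(\phi)\gamma=\langle-,[u]\rangle$, and the algebraic analogue of R{\o}rdam's Lemma 3.3 (Lemma \ref{lem:3.3}) shows that $j(\phi)=j(\psi)$ forces $[u]=[\nu^{-1}\lambda(\nu)]$ for some $\nu\in U(R_\phi)$, where $\lambda(a)=\sum_e\phi(e)a\phi(e^*)$; the identification $K_1=\pi_0(U)$ then yields a path of units connecting $u$ to $\nu^{-1}\lambda(\nu)$, after which $\psi=\ad(\nu)\phi$. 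None of this structure (the unit $u$, the pairing, the twisted-conjugacy equation) appears in your sketch, and ``matching the $\phi(s_e)$ by pure infiniteness'' does not substitute for it. A related slip occurs in your surjectivity argument: conjugation by units is inner and hence invisible to $kk$, so it cannot ``reach the $K_1$-cokernel part''; the operation needed is the asymmetric twist $\phi(e)\mapsto g\phi(e)$, $\phi(e^*)\mapsto\phi(e^*)g^{-1}$ for $g\in U(R_\phi)$, whose effect on $K_1\circ\gamma$ is again Lemma \ref{lem:aux}. Finally, note that the map $\ell^{E^0}\to L(E)$ is an inclusion, not a surjection, so there is no ``pushing forward'' of idempotents; one must construct the images of the edges by hand from Murray--von Neumann equivalences, which is Theorem \ref{thm:k0liftr}. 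The case where $L(E)$ is simple but not purely infinite (so $L(E)\cong M_n$) also needs the separate, easier argument of Proposition \ref{prop:homotopis}.
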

Thanks to Remark \ref{rem:gagp}, we may view Theorem \ref{intro:kklift} as a generalization of the theorem of Ara, Goodearl and Pardo \cite{agp} which says that if $R$ is as in the theorem and $\cV(R)$ is the monoid of Murray-von Neumann equivalence classes of idempotent matrices in $M_\infty R$, then $K_0(R)=\cV(R)\setminus\{0\}$. In fact, the latter result is used in the proof of  Theorem \ref{intro:kklift}.
 The proof of Theorem \ref{intro:kklift} also uses results from our previous paper on $kk$ of Leavitt path algebras and an adaptation to the purely algebraic setting, developed in Sections  \ref{sec:k0k1lift} and \ref{sec:kklift}, of several results proved for the $C^*$-algebra setting in R\o rdam's article \cite{ror}. 

\goodbreak

\bigskip

The rest of this paper is organized as follows. In Section \ref{sec:v1} we prove (Corollary \ref{coro:kv1pis}) that if $R$ is a $K_1$-regular, purely infinite simple and unital algebra, then $K_1(R)$ is isomorphic to the group $\pi_0(U(R))$ of polynomially connected components of the group of invertible elements of $R$. The case of Theorem \ref{intro:kklift} when $L(E)$ is not purely infinite is contained in Proposition  \ref{prop:homotopis} (see Remark \ref{rem:simpnopis}). Section \ref{sec:k0lift} considers the problem of whether a given group homomorphism $K_0(L(E))\to K_0(R)$ can be lifted to an algebra homomorphism. We show in Theorem \ref{thm:k0liftr} that if $R$ is purely infinite simple and unital and $E$ is countable, then any group homomorphism $\xi:K_0(L(E))\to K_0(R)$ is induced by an algebra homomorphism $\psi:L(E)\to M_\infty R$, that if moreover $E^0$ is finite and $\xi$ is unital (i.e. $\xi([1_{L(E)}])=[1_R])$ then $\xi$ is also induced by a unital homomorphism 
$\phi:L(E)\to R$, and that if $E$ is finite then any group homomorphism
$\xi:K_0(L(E))\to K_0(R)$ is induced by a nonzero algebra homomorphism $\phi:L(E)\to L(F)$. Theorem \ref{thm:embed} is Corollary \ref{coro:tododentro}. If $E$ is a finite graph with reduced incidence matrix $A_E$ as above, we shall abuse notation and write 
$I-A_E^t$ for the transpose of the matrix of \eqref{intro:ckext}. Section \ref{sec:k0k1lift} is concerned with the question of whether, given a finite graph $E$ with reduced incidence matrix $A_E$, an algebra $R$ and a pair $(\xi_0,\xi_1)$ of group homomorphisms $\xi_0:K_0(L(E))\to K_0(R)$ and $\xi_1:\ker(I-A_E^t)\to K_1(R)$, there is an algebra homomorphism simultaneously inducing $\xi_0$ and $\xi_1$. We prove in Theorem \ref{thm:ror} that if $L(E)$ is simple and $R$ is purely infinite, unital and $K_1$-regular, then there is an algebra homomorphism $\phi:L(E)\to R$
which induces both $\xi_0$ and $\xi_1$, and that if $\xi_0$ is unital, then $\phi$ can be chosen to be a unital homomorphism $L(E)\to R$. 
 Section \ref{sec:kklift} is devoted to the proof of Theorem \ref{thm:kkliftr}, which contains the case of Theorem \ref{intro:kklift} when $L(E)$ is purely infinite simple. Theorem \ref{intro:main} is proved in Section \ref{sec:main} (Theorem \ref{thm:main2}). Section \ref{sec:ext} is devoted to algebra extensions. Theorem \ref{intro:ext} is contained in Theorem \ref{thm:ext}; formulas \eqref{intro:ckext} and \eqref{intro:extext} are proved in Corollary \ref{coro:ckext} and Example \ref{ex:cute}. Section \ref{sec:tenso} is concerned with maps to $L_2$ and $R \otimes L_2$; Theorem \ref{thm:embel2} is proved in Theorem \ref{thm:mapl2}. 
  
\goodbreak
\medskip
\noindent{\it Acknowledgements. } A previous version of this article contained a proof of Proposition \ref{prop:vle} for the particular case when $R$ is a Leavitt path algebra. We are indebted to Pere Ara for pointing out that in fact the result holds for every unital purely infinite ring $R$, and that the proof is immediate from results in \cite{agp}.

\section{Idempotents, units and the groups \topdf{$K_0$}{K0} and \topdf{$K_1$}{K1} in the purely infinite simple unital case}\label{sec:v1}

Let $R$ be a ring; write $\Idem(R)$ for the set of idempotent elements. Let $p,q\in\Idem(R)$. We write $p\sim q$ if $p$ and $q$ are \emph{Murray-von Neumann equivalent} \cite{agp}; that is, if there exist elements $x\in pRq$
and $y\in qRp$ such that $xy=p$ and $yx=q$. We call such pair $(x,y)$ an \emph{MvN equivalence} from $p$ to $q$ and write $(x,y):p\sim q$. 

Put $\Idem_n(R)=\Idem(M_n(R))$, $1\le n\le \infty$. If $R$ is unital, we write 
\[
\cV_n(R)=\Idem_n(R)/\sim \qquad (1\le n<\infty),\quad \cV(R)=\Idem_\infty(R)/\sim.
\]

\begin{rem}\label{rem:vr} One may also define $\cV(R)$ as the set of isomorphism classes of finitely generated projective right modules. The equivalence between the two definitions follows from \cite{rosen}*{Theorem 1.2.3} and \cite{black}*{Propositions 4.2.5 and 4.3.1}. One checks that if $f:R\to S$ is a homomorphism and $f(1)=p$, then 
under the identification, the map $\cV(R)\to \cV(S)$ induced by $M_\infty R\to M_\infty S$ corresponds to the scalar extension functor $\otimes_RpS$.
\end{rem}

If $p,q\in\Idem(R)$ and $pq=qp=0$ we say that $p$ and $q$ are \emph{orthogonal} and write $p\perp q$ to indicate this. An idempotent $p$ in a ring $R$ is \emph{infinite}
if there exist orthogonal idempotents $q,r\in R$ such that $p= q+r$, 
$p\sim q$ and $r\ne 0$. A ring $R$ is said to be \emph{purely infinite simple} if for every nonzero element $x\in R$ there exist $s,t\in R$ such that $sxt$ is an infinite idempotent. If $R$ is unital this is equivalent to asking that $R$ not be a division ring and that for every $x\in R$ there are $a,b\in R$ such that $axb=1$.

The following theorem describing $K_0$ and $K_1$ of purely infinite simple unital rings is due to Ara, Goodearl and Pardo.
 If $R$ is a unital ring, write $U(R)$ for the group of invertible elements of $R$.

\begin{thm}\label{thm:kpis}\cite{agp}*{Corollary 2.3 and Theorem 2.4}
If $R$ is a purely infinite simple unital ring, then $$K_0(R) = \mathcal{V}(R) \backslash \{ [0] \}$$
$$K_1(R) = U(R)^{ab}.$$
\end{thm}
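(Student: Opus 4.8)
This is the theorem of Ara, Goodearl and Pardo, so the plan is to reconstruct their argument. Both halves rest on the structural analysis of nonzero idempotents in a purely infinite simple ring: every nonzero idempotent of $M_\infty R$ is properly infinite, and any two nonzero idempotents are subequivalent, i.e.\ each is MvN equivalent to a subidempotent of the other. These two facts follow from the unital characterization recalled above ($R$ is not a division ring and every $x$ admits $a,b$ with $axb=1$), and they are the only inputs specific to pure infinite simplicity.

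For the $K_0$ statement the goal is to show that the canonical map $\gamma\colon\cV(R)\to K_0(R)$ into the Grothendieck group restricts to a bijection on $\cV(R)\setminus\{[0]\}$. First I would prove, using subequivalence, that for nonzero classes $a,b$ one has $a\le b$ in the algebraic order (there is $c$ with $a+c=b$) and symmetrically $b\le a$; since a sum of nonzero idempotents is again nonzero, this shows $\cV(R)\setminus\{[0]\}$ is closed under addition. The key step is to upgrade this to a group structure: proper infiniteness gives $2a\le a$ for each nonzero $a$, and a short monoid argument then produces a neutral idempotent $e$ (satisfying $e+e=e$ and $e+a=a$ for all nonzero $a$) together with, for each nonzero $a$, an inverse $a'$ with $a+a'=e$. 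Thus $\cV(R)\setminus\{[0]\}$ is a group, and the universal property of the Grothendieck group identifies $K_0(R)$ with it: $\gamma$ collapses $[0]$ onto $e$ and is the identity on the complement, so it restricts to the claimed bijection $\cV(R)\setminus\{[0]\}\iso K_0(R)$.

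For the $K_1$ statement, recall $K_1(R)=\GL(R)^{\mathrm{ab}}=\colim_n\GL_n(R)^{\mathrm{ab}}$ by the Whitehead lemma; it suffices to show the stabilization $U(R)=\GL_1(R)\to\GL(R)$ induces an isomorphism on abelianizations. Here proper infiniteness of $1_R$ manufactures, for each $n$, elements $x_i,y_i$ with $y_ix_j=\delta_{ij}$ and orthogonal idempotents $x_iy_i\le 1$; these assemble into a ring isomorphism $\pi\colon M_n(R)\iso eRe$ (with $e=\sum_i x_iy_i$ a full idempotent of $R$) and hence a ``folding'' homomorphism $\beta_n\colon\GL_n(R)\to U(R)$, $\beta_n(g)=\pi(g)+(1-e)$. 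I would then check that $\beta_n$ and the inclusion $\iota_n$ are mutually inverse on abelianizations: every class of $K_1(R)$ has the form $[\beta_n(g)]$, so the natural map $U(R)^{\mathrm{ab}}\to K_1(R)$ is surjective, and conversely $\beta_n$ carries elementary matrices to commutators in $U(R)$ while $u\mapsto\beta_n(\iota_n(u))=x_1uy_1+(1-x_1y_1)$ is the identity on $U(R)^{\mathrm{ab}}$, so that map is injective.

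The main obstacle is twofold. On the $K_0$ side it is the purely monoid-theoretic step that the nonzero classes form a group; this is where pure infinite simplicity is genuinely needed and where one must be careful that, although all nonzero idempotents dominate one another, $\cV(R)$ is not cancellative, so the neutral element and the inverses must be produced by hand rather than extracted from antisymmetry of $\le$. On the $K_1$ side the delicate point is the naturality bookkeeping: identifying $[\beta_n(g)]$ with $[g]$ in $K_1(R)$ uses that the corner inclusion $eRe\hookrightarrow R$ by the full idempotent $e$ is a Morita equivalence inducing the canonical isomorphism on $K_1$, and one must match this against the standard stabilization isomorphism $K_1(R)\cong K_1(M_nR)$; verifying that these agree, together with the commutator computations in $U(R)$, is routine but is the part demanding the most care.
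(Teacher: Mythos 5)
The paper does not prove this statement at all: Theorem \ref{thm:kpis} is imported verbatim from Ara--Goodearl--Pardo (\cite{agp}, Corollary 2.3 and Theorem 2.4), so there is no in-paper argument to compare yours against. Judged on its own terms, your reconstruction of the $K_0$ half is essentially the correct (Cuntz-style) argument: mutual subequivalence of nonzero idempotents plus proper infiniteness ($2a\le a$) makes $\cV(R)\setminus\{[0]\}$ a semigroup in which a neutral element and inverses can be manufactured by hand, and the universal property of the Grothendieck group then identifies it with $K_0(R)$. That outline is sound.

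The $K_1$ half, however, has a genuine gap. Your strategy reduces everything to two claims you label routine: that the folding map $\beta_n$ sends elementary matrices $1+a\epsilon_{ij}$ to commutators of $U(R)$, and that $u\mapsto x_1uy_1+(1-x_1y_1)$ is the identity on $U(R)^{\mathrm{ab}}$. Neither is routine, and together they are essentially equivalent to the hard inclusion $[\GL(R),\GL(R)]\cap U(R)\subseteq[U(R),U(R)]$, which is the actual content of \cite{agp}*{Theorem 2.4}. Concretely, $\beta_n(1+a\epsilon_{ij})=1+x_iay_j$ is a unipotent sitting in a corner $(p+q)R(p+q)$ with $p+q\ne 1$; Whitehead-type identities express it as a commutator only after passing to $3\times 3$ matrices or larger, and bringing it back down to a commutator of $U(R)$ itself is exactly where work is needed. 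Similarly, the corner shift $x_1uy_1+(1-x_1y_1)$ agrees with $u$ in $K_1(R)$ for Morita reasons, but showing it agrees with $u$ modulo $[U(R),U(R)]$ is part of what must be proved, not an input. Ara--Goodearl--Pardo close these gaps by a Menal--Moncasi-style reduction: every unit is congruent, modulo a distinguished subgroup generated by elementary-type elements, to one of the form $p+(1-p)u(1-p)$, for which the commutator statement can be checked directly. (The present paper recounts precisely this two-step structure when it adapts the argument in the proof of Proposition \ref{prop:kv1pis}.) Without supplying that reduction, or some substitute, your proof of $K_1(R)=U(R)^{\mathrm{ab}}$ does not go through.
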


\begin{prop}\label{prop:vle} Let $R$ be a purely infinite simple unital ring. Then the map
$\iota:\cV_1(R)\to \cV(R)$ is an isomorphism. Moreover, for every $n\ge 1$ and every element $(q_1,\dots,q_n)\in \Idem_\infty(R)^n$ there exists $(p_1,\dots,p_n)\in\Idem_1(R)^n$, such that $p_i\sim q_i$ in $\Idem_\infty(R)$ and such that $p_i\perp p_j$ for $i\ne j$.
\end{prop}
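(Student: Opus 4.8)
The plan is to deduce both assertions directly from the Ara--Goodearl--Pardo computation recorded in Theorem~\ref{thm:kpis}. Throughout I would identify $\cV_1(R)$ with the image of $\Idem(R)$ in $\cV(R)$ under the upper-left corner inclusion $R\to M_\infty R$; writing $e\in M_\infty R$ for the idempotent corresponding to $1_R$, the classes in this image are exactly those represented by idempotents $p$ with $ep=pe=p$. For injectivity of $\iota$ I would argue directly, with no input from Theorem~\ref{thm:kpis}: if $p,q\in\Idem(R)$ are viewed as corner idempotents and $(x,y)$ is an MvN equivalence $p\sim q$ in $M_\infty R$, then $x\in p(M_\infty R)q$ and $y\in q(M_\infty R)p$ satisfy $x=e(pxq)e$ and $y=e(qyp)e$, so both lie in $e(M_\infty R)e=R$. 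Hence $(x,y)$ is already an MvN equivalence in $\Idem(R)$, and $p\sim q$ in $\cV_1(R)$.

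For surjectivity and the refinement statement the key is to convert Theorem~\ref{thm:kpis} into two ring-theoretic facts. First, since $\cV(R)\setminus\{0\}$ is a group under the addition of $\cV(R)$, any two nonzero classes $[a],[b]$ satisfy $[a]\le[b]$ in the algebraic order of $\cV(R)$: the element $[b]-[a]$ computed in that group is again nonzero and adds to $[b]$. Via the standard dictionary between the monoid order on $\cV(R)$ and MvN subequivalence of idempotents, this says that every nonzero idempotent of $M_\infty R$ is equivalent to an idempotent sitting orthogonally below any prescribed nonzero idempotent. Second, applying this with $[a]=2[1_R]$ and $[b]=[1_R]$ (both nonzero) gives $2[1_R]\le[1_R]$, that is, $1_R$ is properly infinite. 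I would isolate these as the only two consequences of Theorem~\ref{thm:kpis} used in what follows.

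Granting these, the case $n=1$ is precisely surjectivity: given a nonzero $q\in\Idem_\infty(R)$, comparability yields an idempotent $p$ with $ep=pe=p$ and $p\sim q$, so $p\in\cV_1(R)$ and $\iota([p])=[q]$ (and $q=0$ is handled by $p=0$). For general $n$ I would first iterate proper infiniteness of $1_R$ to produce pairwise orthogonal nonzero idempotents $g_1,\dots,g_n\in\Idem(R)$ with $g_i\sim 1_R$ for each $i$, all below $e$. For each $i$ with $q_i\ne 0$, comparability applied with prescribed idempotent $g_i$ gives an idempotent $p_i$ with $p_ig_i=g_ip_i=p_i$ and $p_i\sim q_i$ (taking $p_i=0$ when $q_i=0$). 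Since $p_i$ lies below $g_i$, $p_j$ below $g_j$, and $g_i\perp g_j$, the $p_i$ are pairwise orthogonal, while $p_i\le g_i\le e$ places each $p_i$ in $\Idem_1(R)$, as required. The step needing the most care is this orthogonalization: one must carry out the splitting of $1_R$ into $n$ orthogonal full pieces and check that dropping each $p_i$ into a distinct corner preserves both the equivalence $p_i\sim q_i$ and the mutual orthogonality, rather than merely reproving the $n=1$ statement $n$ times.
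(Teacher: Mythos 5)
Your argument is correct, but it runs in a different direction from the paper's. The paper disposes of Proposition \ref{prop:vle} with a one-line citation to \cite{agp}*{Proposition 1.5 and Lemma 1.1}, which are the elementary structural facts about idempotents in purely infinite simple rings (every nonzero finitely generated projective is represented by an idempotent of $R$ itself, and $1$ splits into arbitrarily many orthogonal full idempotents); in \cite{agp} those lemmas are ingredients in the proof of the $K_0$-computation recorded here as Theorem \ref{thm:kpis}. You instead take Theorem \ref{thm:kpis} as the black box and reverse-engineer the two structural facts from it: since $\cV(R)\setminus\{0\}$ is a group under the monoid operation, any two nonzero classes are comparable in the algebraic order (the difference computed in the group is a nonzero class, and $[p]=[0]$ forces $p=0$, so the complement class is realized by a genuine orthogonal decomposition), and taking $[a]=2[1]$, $[b]=[1]$ gives proper infiniteness of $1$. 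All the individual steps check out: the injectivity of $\iota$ is the corner-compression observation (this is exactly the argument of a lemma the authors drafted and then suppressed), the restriction of an MvN equivalence to a subidempotent preserves orthogonality of summands, and parking each $p_i$ under a distinct full corner $g_i$ of $1$ yields the mutual orthogonality. The trade-off is that your route is logically valid within this paper (Theorem \ref{thm:kpis} is imported as an external result, so there is no circularity here) but is globally redundant, deriving from the deep theorem facts that are more naturally, and in \cite{agp} actually, proved first; the paper's citation is the economical choice, while yours has the merit of making the proposition genuinely "straightforward from Theorem \ref{thm:kpis}" alone.
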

\begin{proof}
This is straightforward from \cite{agp}*{Proposition 1.5 and Lemma 1.1}
\end{proof}

Combining Proposition \ref{prop:vle} and Theorem \ref{thm:kpis} we obtain the following.

\begin{coro}\label{coro:k0pis}
Let $R$ be a purely infinite simple unital ring. Then 
$$ K_0(R) \cong \cV_1(R)\backslash \{[0]\}.  $$ 
\end{coro}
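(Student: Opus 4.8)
The plan is to derive Corollary \ref{coro:k0pis} by directly combining the two results it cites. By Theorem \ref{thm:kpis}, for a purely infinite simple unital ring $R$ we have $K_0(R) = \cV(R) \setminus \{[0]\}$, where $\cV(R) = \Idem_\infty(R)/\!\sim$ is built from idempotent matrices of arbitrary size. The goal is to replace $\cV(R)$ by $\cV_1(R) = \Idem_1(R)/\!\sim$, the monoid coming from idempotents in $R$ itself. The bridge between these is Proposition \ref{prop:vle}, which asserts that the natural map $\iota \colon \cV_1(R) \to \cV(R)$ is an isomorphism.

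So the main step is simply to observe that $\iota$ carries $\{[0]\}$ to $\{[0]\}$ and is a bijection, hence restricts to a bijection on complements. First I would note that $\iota$ is a homomorphism of abelian monoids sending the class of the zero idempotent of $R$ to the class of the zero idempotent of $M_\infty R$; since $\iota$ is injective and $[0]$ is the unique element mapping to $[0]$, we get $\iota^{-1}(\{[0]\}) = \{[0]\}$ in $\cV_1(R)$. Then I would combine this with Theorem \ref{thm:kpis} to write the chain of isomorphisms
\[
K_0(R) = \cV(R) \setminus \{[0]\} \cong \cV_1(R) \setminus \{[0]\},
\]
where the equality is Theorem \ref{thm:kpis} and the isomorphism is the restriction of $\iota$.

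There is essentially no obstacle here, as the statement is a formal consequence of the two preceding results. The only point requiring a moment's care is confirming that the isomorphism $\iota$ respects the distinguished basepoint $[0]$ so that removing it from both sides yields a well-defined bijection; this is immediate since any monoid isomorphism preserves the identity element, and $[0]$ is the identity of both $\cV_1(R)$ and $\cV(R)$. Thus the corollary follows at once, and I would state it in a single short sentence rather than an extended argument.
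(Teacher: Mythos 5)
Your proposal is correct and matches the paper's argument exactly: the corollary is obtained by combining Theorem \ref{thm:kpis} with the isomorphism $\iota\colon\cV_1(R)\to\cV(R)$ of Proposition \ref{prop:vle}, noting that $\iota$ preserves the class $[0]$ so that it restricts to a bijection on the complements.
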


\begin{coro}\label{coro:equiv} Let $R$ be a purely infinite simple unital ring and let  $e, f\in R$ be nonzero idempotents. 
Then the following are equivalent
\item[(1)] $e \sim f$.
\item[(2)] $[e] = [f] $ in $K_0(R)$. 

\noindent If furthermore $e,f\in\Idem_1(R)\backslash\{0,1\}$ then the above conditions are also equivalent to the following. 

\item[(3)] There exists $ u \in U(R)$ such that $ f= u e u^{-1}$.
\item[(4)] There exists a commutator $ u \in [U(R):U(R)]$ such that $ f= u e u^{-1}$.
\end{coro}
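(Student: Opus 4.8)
The plan is to establish $(1)\Leftrightarrow(2)$ for arbitrary nonzero $e,f$, and then, under the extra hypothesis $e,f\in\Idem_1(R)\setminus\{0,1\}$, to close the cycle $(4)\Rightarrow(3)\Rightarrow(1)\Rightarrow(4)$.

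For $(1)\Leftrightarrow(2)$ I would argue directly from the machinery already set up. By definition, $e\sim f$ means exactly that $[e]=[f]$ in $\cV_1(R)$. By Corollary \ref{coro:k0pis} (equivalently, Proposition \ref{prop:vle} combined with Theorem \ref{thm:kpis}) the natural map restricts to a bijection $\cV_1(R)\setminus\{[0]\}\iso K_0(R)$; since $e,f\neq 0$ their classes are nonzero, so $[e]=[f]$ in $\cV_1(R)$ if and only if $[e]=[f]$ in $K_0(R)$. The remaining easy implications are $(4)\Rightarrow(3)$, which is immediate since a commutator is in particular a unit, and $(3)\Rightarrow(1)$: if $f=ueu^{-1}$, then setting $x=eu^{-1}f$ and $y=fue$ one checks $x\in eRf$, $y\in fRe$, and using $ueu^{-1}=f$ (hence $u^{-1}fu=e$) that $xy=e$ and $yx=f$, so $(x,y):e\sim f$.

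The substance is $(1)\Rightarrow(4)$. First I would produce a conjugating unit. Since $e,f\neq 1$, the idempotents $1-e$ and $1-f$ are nonzero, and from $[e]=[f]$ together with $[e]+[1-e]=[1]=[f]+[1-f]$ in the abelian group $K_0(R)$ one cancels to get $[1-e]=[1-f]$; by the already-proved equivalence $(1)\Leftrightarrow(2)$ applied to the nonzero idempotents $1-e,1-f$ this yields $1-e\sim 1-f$. Choosing MvN equivalences $(x,y):e\sim f$ and $(x',y'):(1-e)\sim(1-f)$ and forming $u=y+y'$, the orthogonality relations (e.g. $y(1-e)=0$, $y'e=0$) show that $u\in U(R)$ with $u^{-1}=x+x'$ and that $ueu^{-1}=f$, which already gives $(3)$. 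To upgrade $u$ to a commutator, I would exploit the freedom to replace $u$ by $uw$ for any unit $w$ commuting with $e$, since then $(uw)e(uw)^{-1}=ueu^{-1}=f$; such $w$ are precisely the block-diagonal units, i.e. $w\in U(eRe)\times U\big((1-e)R(1-e)\big)$. By Theorem \ref{thm:kpis} we have the unstabilized description $K_1(R)=U(R)^{ab}$, so it suffices to find such a $w$ with $[uw]=0$ in $K_1(R)$, as this says exactly that $uw\in[U(R),U(R)]$. Because $R$ is simple, $1-e$ is a full idempotent, so $(1-e)R(1-e)$ is Morita equivalent to $R$ and the corner map $\bar\jmath\colon K_1\big((1-e)R(1-e)\big)\to K_1(R)$, $[w']\mapsto[e+w']$, is an isomorphism; in particular it is surjective. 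Picking $w'$ with $\bar\jmath[w']=-[u]$ and setting $w=e+w'$ (which commutes with $e$) gives $[uw]=0$, so $u_0:=uw$ is a commutator with $u_0 e u_0^{-1}=f$, establishing $(4)$.

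The \emph{main obstacle} is the last step, i.e. the passage from an arbitrary conjugating unit to a commutator. This hinges on two points that must be handled carefully: that the corner inclusion $[w']\mapsto[e+w']$ really induces the Morita isomorphism on $K_1$ (so that the image of the centralizer of $e$ exhausts $K_1(R)$), and that ``vanishing in $K_1$'' literally means ``lying in the commutator subgroup,'' which is legitimate only because of the unstabilized identification $K_1(R)=U(R)^{ab}$ from Theorem \ref{thm:kpis}. Both rely on $1-e$ being full, which is where simplicity of $R$ enters; the hypothesis $e,f\neq 0,1$ is exactly what guarantees $1-e,1-f$ are nonzero so that this argument, and the cancellation step, are available.
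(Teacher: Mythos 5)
Your proposal is correct and follows essentially the same route as the paper: $(1)\Leftrightarrow(2)$ via Corollary \ref{coro:k0pis}, the cancellation argument in $K_0(R)$ to obtain $1-e\sim 1-f$ (you build the conjugating unit explicitly where the paper cites \cite{black}*{Proposition 4.2.5}), and the upgrade of the conjugating unit to a commutator by absorbing its $K_1$-class into a full corner (the paper uses the corner at $1-f$ and multiplies on the left; you use the corner at $1-e$ and multiply on the right --- an immaterial difference). The one step to make explicit is that, as in the paper's appeal to \cite{agp}*{Corollary 1.7}, the full corner $(1-e)R(1-e)$ is again purely infinite simple, so Theorem \ref{thm:kpis} applies to it and every class in $K_1$ of the corner is realized by a single unit $w'$, which is what lets you choose $w'$ with $[e+w']=-[u]$.
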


\begin{proof}
The equivalence of (1) and (2) follows from Corollary \ref{coro:k0pis}. By \cite{black}*{Proposition 4.2.5}, (3) is equivalent to having simultaneously $e \sim f$ and $1-e \sim 1-f$. Hence to prove that (1) implies (3) it only remains to show that $1-e \sim 1-f$. But 
$$ [e] + [1-e] = [1]= [f]+[1-f] $$
in $K_0(R)$ and  $[e]= [f]$, implies  $[1-e] = [1-f]$ in $K_0(R)$ and therefore in $\cV_1(R)$. Hence  $1-e \sim 1-f$.
Next we show that (3) implies (4).  Because $R$ is simple and $f\ne 1$, $1-f$ is a full idempotent. Hence $ (1-f) L(E) (1-f)$ is purely infinite simple (by \cite{agp}*{Corollary 1.7}) and the inclusion induces an isomorphism $ K_1((1-f) R (1-f)) \iso K_1(R)$. By Theorem \ref{thm:kpis}, this implies that the induced map $U((1-f) R (1-f))^{ab}\to U(R)^{ab}$ is an isomorphism. Since the latter map sends $[\xi]\mapsto [\xi+f]$, there is an element $\omega\in U((1-f) R (1-f))$ such that $ [\omega+f]= [u^{-1}]$. Then $(\omega+f) u  \in [U(R):U(R)]$ and $(\omega+f) u e u^{-1} (\omega^{-1} +f ) = f$.
To prove that (4) implies (1) take $x = e u^{-1} f$ and $y = f u e$; we have $xy=e$ and $yx=f$. 
\end{proof}

Let $G:\aha\to\grp$ be a functor from algebras to groups and let $A\in\aha$. The \emph{connected component} of $G(A)$ is the subgroup
\[
G(A)\supset G(A)^0=\{g\mid (\exists u(t)\in G(A[t]))\quad u(0)=1, u(1)=g\}.
\]
Observe that $G(A)^0$ is a normal subgroup. We write 
\[
\pi_0G(A)=G(A)/G(A)^0.
\]
The \emph{Karoubi-Villamayor} $K_1$-group (\cite{kv}) is 
\[
KV_1(A)=\pi_0(\GL(A)).
\]
Observe that every elementary matrix is in $\GL(A)^0$. It follows that we have a surjective homomorphism
\begin{equation}\label{map:k1kv1}
K_1(A)\onto KV_1(A).
\end{equation}
 By \cite{weih}*{Proposition 1.5}, the map \eqref{map:k1kv1} is an isomorphism whenever $A$ is $K_1$-regular. 

\begin{lem}\label{lem:kv1pis}
Let $R$ be a unital ring.
\item[i)] If $p\in \Idem(R)$ and $u\in U(pRp)^0$, then $u+1-p\in U(R)^0$.
\item[ii)] Let $x_1,\dots,x_n,y_1,\dots,y_n\in R$ such that $y_ix_jy_i=\delta_{i,j}y_i$, $x_iy_jx_i=\delta_{i,j}x_i$. Set $p_i=x_iy_i$, $q_i=y_ix_i$, $P=\bigoplus_{i=1}^np_iR$, $Q=\bigoplus_{i=1}^nq_iR$. Then the map 
\begin{gather*}
c_{y,x}:=\End_R(P)=\bigoplus_{i,j}p_jRp_i\to \bigoplus_{i,j}q_jRq_i=\End_R(Q),\\
a\mapsto \diag(y_1,\dots,y_n)a\diag(x_1,\dots,x_n)
\end{gather*}
is an isomorphism which sends $U(\End_R(P))^0$ isomorphically onto $U(\End_R(Q))^0$.
\end{lem}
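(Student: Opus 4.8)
The plan is to prove both parts by exhibiting the maps in question as group, respectively ring, homomorphisms that are compatible with the base change $(-)[t]$, so that they carry polynomial paths to polynomial paths and hence respect the connected components defined in the excerpt. For (i), I would first record that the \emph{corner inclusion} $U(pRp)\to U(R)$, $u\mapsto u+1-p$, is a group homomorphism. Indeed, for $u,v\in pRp$ one has $u(1-p)=(1-p)v=0$ and $(1-p)^2=1-p$, whence $(u+1-p)(v+1-p)=uv+1-p$; in particular $p\mapsto 1$ and an inverse pair $uv=vu=p$ goes to an inverse pair in $R$. The very same formula defines a group homomorphism $U((pRp)[t])=U(pR[t]p)\to U(R[t])$ commuting with evaluation at $t=0,1$. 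Thus a path $u(t)\in U((pRp)[t])$ with $u(0)=p$ and $u(1)=u$ is sent to the path $u(t)+1-p\in U(R[t])$ from $1$ to $u+1-p$, which proves $u+1-p\in U(R)^0$.

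For (ii), I would start from the standard identification $\Hom_R(p_iR,p_jR)=p_jRp_i$ (left multiplication, composition corresponding to matrix multiplication), so that $\End_R(P)=\bigoplus_{i,j}p_jRp_i$ and $\End_R(Q)=\bigoplus_{i,j}q_jRq_i$ are generalized matrix rings; no orthogonality of the $p_i$ is needed here. Writing $a=(a_{kl})$ with $a_{kl}\in p_kRp_l$, the map reads $c_{y,x}(a)_{kl}=y_ka_{kl}x_l$. From the diagonal relations $y_ix_iy_i=y_i$ and $x_iy_ix_i=x_i$ one gets $y_kp_k=y_k$, $p_lx_l=x_l$, $q_ky_k=y_k$ and $x_lq_l=x_l$, and these immediately give $y_ka_{kl}x_l\in q_kRq_l$, so $c_{y,x}$ lands in $\End_R(Q)$, sends the unit $\diag(p_1,\dots,p_n)$ to $\diag(q_1,\dots,q_n)$, and is additive. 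The key step is multiplicativity: I would compute
\[
(c_{y,x}(a)c_{y,x}(b))_{kl}=\sum_m y_ka_{km}(x_my_m)b_{ml}x_l=\sum_m y_ka_{km}p_mb_{ml}x_l=y_k(ab)_{kl}x_l,
\]
where the middle equality uses $x_my_m=p_m$ and $p_mb_{ml}=b_{ml}$. I would then exhibit the inverse as $c_{x,y}:\End_R(Q)\to\End_R(P)$, $b\mapsto\diag(x)b\diag(y)$, the identity $c_{x,y}(c_{y,x}(a))_{kl}=x_ky_ka_{kl}x_ly_l=p_ka_{kl}p_l=a_{kl}$ (and its mirror) showing $c_{x,y}c_{y,x}=\id$ and $c_{y,x}c_{x,y}=\id$.

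Once $c_{y,x}$ is known to be a ring isomorphism, the assertion about connected components is formal. Any ring isomorphism $f:S\to S'$ extends coefficient-wise to a ring isomorphism $S[t]\to S'[t]$, restricting to a group isomorphism $U(S[t])\iso U(S'[t])$ that commutes with evaluation at $0$ and $1$, and therefore carries $U(S)^0$ isomorphically onto $U(S')^0$. Applying this to $f=c_{y,x}$, whose coefficient-wise extension is visibly $a(t)\mapsto\diag(y)a(t)\diag(x)$, yields the desired isomorphism $U(\End_R(P))^0\iso U(\End_R(Q))^0$. I expect the only genuine work to be the index bookkeeping in the multiplicativity computation for $c_{y,x}$ and in checking that the two candidate maps land in the correct corners; the connected-component statements in both parts then come essentially for free from naturality under $(-)[t]$.
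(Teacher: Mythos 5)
Your proposal is correct and fills in exactly the routine verifications that the paper dismisses with the single word ``Straightforward'': the corner inclusion $u\mapsto u+1-p$ is a group homomorphism compatible with $(-)[t]$, and $c_{y,x}$ is a ring isomorphism with inverse $c_{x,y}$, so both maps preserve connected components of unit groups. All the identities you use ($y_kp_k=y_k$, $p_lx_l=x_l$, $q_ky_k=y_k$, $x_lq_l=x_l$, and the multiplicativity computation via $x_my_m=p_m$) check out.
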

\begin{proof} Straightforward. \end{proof}
\begin{prop}\label{prop:kv1pis}
Let $R$ be a unital purely infinite simple ring. Then the canonical map $\pi_0(U(R))\to \pi_0(\GL(R))=KV_1(R)$ is an isomorphism. 
\end{prop}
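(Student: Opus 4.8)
The map in question is induced by the inclusion $\iota_1\colon U(R)=\GL_1(R)\hookrightarrow \GL(R)$. I would first record two generalities: since $\GL(R)=\colim_n\GL_n(R)$ and taking connected components commutes with filtered colimits (a polynomial path uses only finitely many coordinates), we have $KV_1(R)=\pi_0(\GL(R))=\colim_n\pi_0(\GL_n(R))$, and this group is abelian by the Whitehead lemma. The plan is to produce a two-sided inverse $\bar r\colon\pi_0(\GL(R))\to\pi_0(U(R))$ of $\pi_0(\iota_1)$. The only input beyond Lemma \ref{lem:kv1pis} will be proper infiniteness: as $R$ is purely infinite simple and unital, Proposition \ref{prop:vle} gives $\cV_1(R)=\cV(R)$, so for each $n$ the class $n[1]\in\cV(R)$ is represented by an idempotent $p=p^{(n)}\in\Idem_1(R)$ together with an isomorphism $R^n\cong pR$ of right modules. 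Equivalently one obtains Leavitt data $x_1,\dots,x_n,y_1,\dots,y_n$ as in Lemma \ref{lem:kv1pis}(ii) with $q_i=y_ix_i=1$, so that $Q=R^n$, $P=pR$, and $c_{y,x}$ becomes a ring isomorphism $\theta_n\colon M_nR=\End_R(R^n)\iso \End_R(pR)=pRp$.

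For each $n$ set $r_n\colon\GL_n(R)\to U(R)$, $r_n(g)=\theta_n(g)+(1-p^{(n)})$. By Lemma \ref{lem:kv1pis}(ii) the isomorphism $\theta_n$ carries $\GL_n(R)^0=U(M_nR)^0$ into $U(p^{(n)}Rp^{(n)})^0$, and by Lemma \ref{lem:kv1pis}(i) adding $1-p^{(n)}$ carries the latter into $U(R)^0$; hence $r_n$ descends to $\bar r_n\colon\pi_0(\GL_n(R))\to\pi_0(U(R))$. To assemble $\bar r$ on the colimit I must check compatibility with the stabilizations $\GL_n(R)\hookrightarrow \GL_{n+1}(R)$, $g\mapsto g\oplus 1$, i.e. that $[\theta_{n+1}(g\oplus 1)+(1-p^{(n+1)})]=[\theta_n(g)+(1-p^{(n)})]$ in $\pi_0(U(R))$. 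Whenever the data can be chosen nested (the $(n{+}1)$-st isometry taken with range orthogonal to $p^{(n)}$, so $p^{(n)}\le p^{(n+1)}$ and $\theta_{n+1}$ extends $\theta_n$), the two elements are literally equal, since the extra diagonal $1$ contributes the summand $p^{(n+1)}-p^{(n)}$, which is absorbed by $1-p^{(n+1)}$.

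The main obstacle is exactly this compatibility when no room is left for a nested choice (for instance when $p^{(n)}=1$, as happens for $R=L_2$). In general the two representatives are of the form ``$g$ realized in a corner, extended by the identity'' for two different module isomorphisms $R^n\cong p^{(n)}R$, $R^n\cong p'R$ with $p^{(n)}\sim p'$ and isomorphic complements. I would connect them by a polynomial rotation homotopy: using proper infiniteness to produce pairwise orthogonal isometries realizing two orthogonal copies of $R^n$, I transport the elementary rotation path $e_{12}(\tau)e_{21}(-\tau)e_{12}(\tau)\in \GL_{2}(R[\tau])$ into the corresponding corner and extend it by the identity. Lemma \ref{lem:kv1pis}(i) then guarantees that the resulting path lies in $U(R[\tau])$ and connects the identity to a unit implementing the change of realization, whence the two classes coincide. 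This rotation, which absorbs the complementary idempotent into the extra matrix slots, is the algebraic analogue of R\o rdam's argument and is where the hypothesis that $R$ be purely infinite is essential.

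Finally I would check that $\bar r$ is a two-sided inverse of $\pi_0(\iota_1)$. For $n=1$ one may take $p^{(1)}=1$ and $\theta_1=\id$, so $r_1=\id_{U(R)}$ and $\bar r\circ\pi_0(\iota_1)=\id_{\pi_0(U(R))}$ on the nose. For the reverse composite it suffices, for $g\in\GL_n(R)$, to show that $g$ and $r_n(g)=\theta_n(g)+(1-p^{(n)})$ have the same class in the abelian group $\pi_0(\GL(R))$. Fixing an isomorphism $\beta\colon R^n\iso p^{(n)}R$ inducing $\theta_n$, the block matrix $\left(\begin{smallmatrix}0&\beta^{-1}\\\beta&0\end{smallmatrix}\right)$ conjugates $g\oplus \id_{p^{(n)}R}$ to $\id_{R^n}\oplus\theta_n(g)$; adding $\id_{(1-p^{(n)})R}$ and using that conjugation is trivial on the abelian group $\pi_0(\GL(R))$ yields $[g]=[\theta_n(g)+(1-p^{(n)})]=\pi_0(\iota_1)(\bar r[g])$. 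Hence $\pi_0(\iota_1)$ is an isomorphism with inverse $\bar r$.
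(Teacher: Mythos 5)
Your strategy --- constructing an explicit inverse of $\pi_0(U(R))\to\pi_0(\GL(R))$ by compressing $M_n(R)$ into a corner $pRp$ via Proposition \ref{prop:vle} and transporting connected components with Lemma \ref{lem:kv1pis} --- is the Cuntz-style route familiar from the $C^*$-setting, and it differs from the paper's proof, which gets surjectivity from $K_1(R)=U(R)^{ab}$ (Theorem \ref{thm:kpis}) together with the surjection $K_1\onto KV_1$, and proves $U(R)\cap\GL(R)^0\subset U(R)^0$ by rerunning the two-step Ara--Goodearl--Pardo argument (Menal--Moncasi diagonalization modulo elementary matrices, then the corner analysis of units of the form $p+(1-p)u(1-p)$) with ``commutator subgroup'' replaced by ``connected component''. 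Several pieces of your argument are sound: each $r_n$ descends to $\pi_0$, the block-swap conjugation shows $[g]=[r_n(g)]$ in the abelian group $\pi_0(\GL(R))$ (which already gives surjectivity of $\pi_0(\iota_1)$), and the Whitehead rotation realized inside $(p+p')R(p+p')\cong M_2(M_nR)$ does prove $[\theta(g)+1-p]=[\theta'(g)+1-p']$ \emph{when the two supports are orthogonal}.

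The gap is exactly at the step you call the main obstacle, and your sketch does not close it. The needed statement is that $[\theta_n(g)+1-p^{(n)}]\in\pi_0(U(R))$ is independent of the realization $R^n\cong p^{(n)}R$; equivalently, that compression by a single isometry pair, $u\mapsto sut+1-st$ with $ts=1$, induces the identity (or at least an injective map) on $\pi_0(U(R))$. Your premise that two realizations have ``isomorphic complements'' already fails in the unavoidable cases: for $R=L_2$ and $n=2$ one realization has $p=1$ (complement $0$) while another has $p'\ne 1$ with $1-p'$ a nonzero idempotent, and a nonzero idempotent is never equivalent to $0$; hence no unit of $R$ implements the change of realization, and since no nonzero idempotent is orthogonal to $p=1$ you can neither conjugate nor interpolate a third, orthogonal system. (Chaining through intermediate orthogonal systems fails for the same reason whenever one support is $1$, which is forced for your $n=1$ choice $p^{(1)}=1$.) Moreover, even when a conjugating unit $v$ does exist, you would need $v\in U(R)^0$, or that conjugation acts trivially on $\pi_0(U(R))$; the latter is equivalent to $[U(R),U(R)]\subset U(R)^0$, which is not available at this stage --- it is a consequence of the proposition being proved. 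This compatibility is precisely the hard content of the result, and it is what the paper imports from \cite{agp} and \cite{m&m}: every unit is congruent modulo $U(R)^0$ to one of the special form $p+(1-p)u(1-p)$ with $p\ne 0$, and only for units of that form does the corner-compression argument close up. Without an algebraic substitute for that reduction, your $\bar r$ is not known to be well defined on the colimit and injectivity of $\pi_0(\iota_1)$ is not established.
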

\begin{proof}
We know from Theorem \ref{thm:kpis} and \eqref{map:k1kv1} that $U(R)\to KV_1(R)$ is surjective. The kernel of this map is $U(R)\cap \GL(R)^0$; it is clear that it contains $U(R)^0$. We have to show that 
\begin{equation}\label{u0gl0}
U(R)\cap\GL(R)^0\subset U(R)^0.
\end{equation} 
We claim that the argument of the proof  that $[\GL(R):\GL(R)]\cap U(R)\subset [U(R):U(R)]$ in \cite{agp}*{Theorem 2.3} can be adapted to prove \eqref{u0gl0}.
The proof in \emph{loc.cit.} has two parts. The first part shows that if $0\ne p\in \Idem(R)$ and $u\in [\GL(R):\GL(R)]\cap U(R)$ satisfies 
\begin{equation}\label{upup}
u=p+(1-p)u(1-p)
\end{equation} 
then  $u\in [U(R):U(R)]$. Using the same argument and taking Lemma \ref{lem:kv1pis} into account, one shows that if \eqref{upup} is in $\GL(R)^0$, then it must be in $U(R)^0$. In the second part of the proof of \cite{agp}*{Theorem 2.3} it is observed that for adequately chosen  idempotents $e$ and $f\in T=eRe$ and elements $x_1,y_1,\dots,x_n,y_n\in R$, the assignment $a\mapsto \diag(y_1,\dots,y_n)a\diag(x_1,\dots,x_n)$ induces an isomorphism between $R$ and the subring
\[
 M_n(T)\supset S=\{(a_{i,j}):a_{i,n}\in Tf, a_{n,i}\in fT \text{ for all } 1\le i\le n\}.
\] 
Let $\cE\subset U(R)$ be the image under the isomorphism $U(S)\iso U(R)$ of the subgroup generated by the set of those elementary matrices $1+a\epsilon_{i,j}$ $i\ne j$ which are elements of $S$. The authors then proceed, using the argument of the proof of \cite{m&m}*{Theorem 2.2}, to show that any $u\in U(R)$ is congruent modulo $\cE$ to one of the form of \eqref{upup}. In view of Lemma \ref{lem:kv1pis} and of the fact that elementary matrices above are in $U(S)^0$, this shows that any $u\in U(R)$ is congruent modulo $U(R)^0$ to one of the form \eqref{upup}. This finishes the proof.
\end{proof}

\begin{coro}\label{coro:kv1pis}
If $R$ is unital, purely infinite simple and $K_1$-regular then $K_1(R)=\pi_0(U(R))$. 
\end{coro}

Let $A$ be an algebra. Identify $\Hom_\aha(\ell,A)=\Idem_1(A)$ via the bijection $\phi\mapsto \phi(1)$. We say that two idempotents 
$p,q\in\Idem_1(A)$ are \emph{homotopic}, and write $p\simh  q$, if the corresponding homomorphisms $\ell\to A$ are homotopic.

\begin{lem}\label{lem:idemzero}
Let $A$ be an algebra and $p\in \Idem_1(A)$. Then $p\simh  0$ if and only if $p=0$. If $A$ is unital, then $p\simh  1$ if and only if 
$p=1$.\end{lem}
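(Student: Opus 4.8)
The plan is to reformulate everything in terms of idempotent paths and then reduce both assertions to a single elementary statement. Recall that under the identification $\Hom_\aha(\ell,A)=\Idem_1(A)$, the homomorphism attached to $p\in\Idem_1(A)$ is $1\mapsto p$, and an \emph{elementary} (polynomial) homotopy between the homomorphisms attached to $p$ and $q$ is exactly an idempotent $e=e(t)\in A[t]$ with $e(0)=p$ and $e(1)=q$; a general homotopy $p\simh q$ is a finite chain of such elementary homotopies, since elementary homotopy is reflexive and symmetric but not transitive. The heart of the matter is therefore the following claim: if $e\in A[t]$ satisfies $e^2=e$ and $e(0)=0$, then $e=0$. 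Granting this, the lemma follows quickly.

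To prove the claim, I would write $e=\sum_{i\ge 1}e_it^i$ (there is no constant term because $e(0)=0$) and suppose $e\neq 0$. Let $m\ge 1$ be the least index with $e_m\neq 0$. Comparing the coefficient of $t^m$ on the two sides of $e^2=e$: on the right it is $e_m\neq 0$, while on the left it is $\sum_{i+j=m}e_ie_j$, a sum over pairs with $i,j\ge 1$ and hence $i,j\le m-1$; by minimality of $m$ every such $e_i$ vanishes, so the left coefficient is $0$, a contradiction. The one point where care is needed is that one cannot simply argue that the lowest-degree term of $e^2$ is $e_m^2t^{2m}$ and invoke $2m>m$, because $A$ need not be reduced and $e_m^2$ could vanish; the coefficient-of-$t^m$ computation sidesteps this entirely.

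With the claim in hand I would assemble the two assertions. For $p\simh 0$, the substitution $t\mapsto 1-t$ turns any elementary homotopy $e$ with $e(1)=0$ into an idempotent $f(t)=e(1-t)\in A[t]$ with $f(0)=0$, whence $f\equiv 0$ and so $e\equiv 0$; thus an idempotent path attaining the value $0$ at either endpoint is constantly $0$, and in particular its other endpoint is $0$ as well. For a general chain $p=p_0\simh p_1\simh\cdots\simh p_n=0$ I would argue by downward induction: the last elementary homotopy forces $p_{n-1}=0$, the previous one then forces $p_{n-2}=0$, and so on up to $p_0=p=0$; the converse $p=0\Rightarrow p\simh 0$ is reflexivity. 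Finally, when $A$ is unital the involution $e\mapsto 1-e$ carries an idempotent path from $p$ to $q$ to one from $1-p$ to $1-q$, so $p\simh 1$ if and only if $1-p\simh 0$, which by the first part holds if and only if $1-p=0$, i.e. $p=1$.

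The only genuinely substantive step, and the one I would flag as the main (modest) obstacle, is the degree argument for the key claim, combined with the bookkeeping observation that $\simh$ is the equivalence relation \emph{generated} by elementary homotopies rather than a single path. Once one knows that vanishing at one endpoint forces an idempotent path to be constant, the downward induction along a chain and the complementation trick for the unital case are entirely routine.
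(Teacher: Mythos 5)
Your proof is correct and follows essentially the same route as the paper, whose entire argument is the assertion that an idempotent $p(t)\in\Idem_1(A[t])$ with $p(0)\in\{0,1\}$ must be constant; your lowest-degree-coefficient computation and the complementation $e\mapsto 1-e$ are exactly the details behind that "one checks," and your chain-of-elementary-homotopies bookkeeping is likewise implicit in the paper's "follows from this."
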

\begin{proof} The if part of both assertions is clear. One checks that if $x\in\{0,1\}$ and $p(t)\in\Idem_1(A[t])$ satisfies $p(0)=x$, 
then $p=x$. The only if part of both assertions follows from this.
\end{proof}

Recall (see \cite{dwkk}*{Section 2}) that a \emph{$C_2$-algebra} is a unital algebra $R$ together with a unital homomorphism from the Cohn algebra $C_2$ to $R$. Thus a $C_2$-algebra is a unital algebra together with elements $x_1,x_2,y_1,y_2\in R$ such that $y_ix_j=\delta_{i,j}$. For example, if $R$ is a purely infinite simple unital algebra then $R$ is a $C_2$-algebra (see \cite{agp}*{Proposition 1.5}). Put
\begin{equation}\label{map:boxplus}
\boxplus:R\oplus R\to R, \quad a\boxplus b=x_1ay_1+x_2by_2.
\end{equation}

\begin{lem}\label{lem:tensosum}
Let $R_1$ and $R_2$ be $C_2$-algebras and let $A_1\triqui R_1$ and $A_2\triqui R_2$ ideals. Let $\boxplus_i:A_i\oplus A_i\to A_i$ be the sum operation \eqref{map:boxplus}. Then the maps 
\[
\boxplus_1\otimes\id_{A_2},\id_{A_1}\otimes\boxplus_2:A_1\otimes A_2\oplus A_1\otimes A_2\to A_1\otimes A_2
\]
are $M_2$-homotopic.   
\end{lem}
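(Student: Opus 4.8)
The plan is to realize both maps as a single ``sum'' homomorphism applied to the two corner inclusions $D\to M_2D$, which are homotopic to each other. Write $D=A_1\otimes A_2\triqui R:=R_1\otimes R_2$, and denote the $C_2$-structures by $s_1,s_2,t_1,t_2\in R_1$ (so $t_is_j=\delta_{i,j}$) and $x_1,x_2,y_1,y_2\in R_2$ (so $y_ix_j=\delta_{i,j}$). Setting $\sigma_i=s_i\otimes 1$, $\tau_i=t_i\otimes 1$, $\xi_i=1\otimes x_i$, $\eta_i=1\otimes y_i$, a direct computation identifies
\[
(\boxplus_1\otimes\id_{A_2})(u,v)=\sigma_1u\tau_1+\sigma_2v\tau_2,\qquad (\id_{A_1}\otimes\boxplus_2)(u,v)=\xi_1u\eta_1+\xi_2v\eta_2.
\]
First I would record the easy but crucial observation that, thanks to the Cohn relations $\tau_i\sigma_j=\delta_{i,j}$ and $\eta_i\xi_j=\delta_{i,j}$, both of these are algebra homomorphisms $D\oplus D\to D$; call them $\Phi$ and $\Psi$.

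The key step is to package both structures into a single $C_2$-structure on $M_2R$. Put $V_i=\diag(\sigma_i,\xi_i)$ and $W_i=\diag(\tau_i,\eta_i)$; then $W_iV_j=\diag(\tau_i\sigma_j,\eta_i\xi_j)=\delta_{i,j}\cdot 1_{M_2R}$, so $(V_i,W_i)$ is a $C_2$-structure on $M_2R$ with ideal $M_2D$, and the associated sum $B(P,Q)=V_1PW_1+V_2QW_2$ is an algebra homomorphism $M_2D\oplus M_2D\to M_2D$. Writing $\iota_2,\iota_2':D\to M_2D$ for the upper-left and lower-right corner inclusions, a one-line matrix computation gives
\[
\iota_2\circ\Phi=B\circ(\iota_2\oplus\iota_2),\qquad \iota_2'\circ\Psi=B\circ(\iota_2'\oplus\iota_2').
\]

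To finish, I would invoke the standard fact that $\iota_2\simh\iota_2'$: conjugation by a polynomial path $Z(t)\in\operatorname{SL}_2(\ell[t])$ from $1$ to the rotation $\left(\begin{smallmatrix}0&-1\\1&0\end{smallmatrix}\right)$ (obtainable as a product of three elementary matrices, hence with polynomial inverse) furnishes a homotopy $h\colon D\to M_2(D[t])$ restricting to $\iota_2$ at $t=0$ and to $\iota_2'$ at $t=1$. Post-composing the resulting homotopy $(\iota_2\oplus\iota_2)\simh(\iota_2'\oplus\iota_2')$ with the fixed homomorphism $B$ yields $\iota_2\Phi\simh\iota_2'\Psi$, while pre-composing $\iota_2\simh\iota_2'$ with $\Psi$ yields $\iota_2'\Psi\simh\iota_2\Psi$; transitivity of homotopy then gives $\iota_2\Phi\simh\iota_2\Psi$, that is, $\Phi\simh_{M_2}\Psi$, as desired.

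I expect the only real content to lie in the bookkeeping of the first two steps — verifying that $B$ is genuinely a homomorphism and that the two corner-composition identities hold — together with checking that the rotation connecting the corners can be taken polynomial with polynomial inverse. Note in particular that commutativity of the two structures $\sigma$ and $\xi$ is never used: only the Cohn relations internal to each structure enter, via $W_iV_j=\delta_{i,j}$. Everything else is formal manipulation with the equivalence relation $\simh$.
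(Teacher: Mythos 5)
Your argument is correct, and it is essentially the same mechanism the paper relies on: the paper's proof is the one-liner ``Straightforward from [dwkk, Lemma~2.3]'', that lemma being the standard fact that $\ad$ by elements satisfying the Cohn relations is $M_2$-homotopic to the corresponding corner inclusion via exactly the rotation-conjugation homotopy you use. Your write-up simply makes this self-contained, and the diagonal packaging $V_i=\diag(\sigma_i,\xi_i)$, $W_i=\diag(\tau_i,\eta_i)$ into a single $C_2$-structure on $M_2R$ is a clean way to organize the verification.
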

\begin{proof} Straightforward from \cite{dwkk}*{Lemma 2.3}.
\end{proof}

Let $C$ be an algebra, $A,B\subset C$ subalgebras and $x,y\in C$ satisfying 
$xAy\subset B$ and $ayxa'=aa'$  $(a,a'\in A)$; then the following map is an algebra homomorphism 
\begin{equation}\label{map:adxy}
\ad(x,y):A\to B,\quad \ad(x,y)(a)=xay.
\end{equation}
If $C$ is unital and $y=x^{-1}$, then $\ad(x,y)=\ad(x)$ is the usual conjugation map.

\begin{lem}\label{lem:adhomo}
Let $A$ and $R$ be algebras, with $A$ finitely generated.  Then:

\item[i)]
The canonical map
\[
[A,M_\infty R]\to [A,M_\infty R]_{M_2}
\]
is bijective.

\item[ii)] If furthermore $R$ is a $C_2$-algebra then the canonical map
\[
[A,R]_{M_2}\to [A,M_\infty R]_{M_2}
\]
is an isomorphism of monoids. 
\end{lem}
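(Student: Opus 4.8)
The plan is to prove the two parts of Lemma \ref{lem:adhomo} separately, with part ii) building on part i) together with Lemma \ref{lem:tensosum}. Throughout I write $\iota_n:M_mR\to M_{mn}R$ for the upper-left corner inclusions and use that homotopy of homomorphisms $A\to M_\infty R$ is detected at finite levels because $A$ is finitely generated: any homomorphism $A\to M_\infty R$, and any homotopy $A\to (M_\infty R)[t]=M_\infty(R[t])$, has image landing in some $M_nR$ since the finitely many images of algebra generators do.

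\textbf{Part i).} The map $[A,M_\infty R]\to[A,M_\infty R]_{M_2}$ is induced by $\iota_2$, so I must show $\iota_2$ induces a bijection on homotopy classes. For surjectivity, an $M_2$-homotopy class is represented by a homomorphism $\varphi:A\to M_\infty R$, and since $M_2(M_\infty R)\cong M_\infty R$ canonically, $\iota_2\varphi$ is already $M_2$-homotopic to $\varphi$ itself viewed in $M_\infty R$: the point is that the two corner inclusions $M_\infty R\rightrightarrows M_2(M_\infty R)=M_\infty R$ differ by an inner automorphism implemented by an infinite permutation matrix, and such an automorphism is homotopic to the identity on $M_\infty R$ (one can rotate using $2\times 2$ rotation matrices in the $t$-variable, a standard Eilenberg-swindle-type homotopy available in $M_\infty$). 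This simultaneously gives injectivity: if $\iota_2\varphi\simh\iota_2\psi$ in $M_2(M_\infty R)=M_\infty R$, then composing with the canonical identification and using that it is homotopic to the identity yields $\varphi\simh\psi$ in $M_\infty R$. The key enabling fact is that in $M_\infty R$ every corner inclusion is homotopic to the identity, which is exactly the ``infinite matrices absorb $M_2$'' phenomenon.

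\textbf{Part ii).} Now assume $R$ is a $C_2$-algebra; I must show $\iota_2$ induces a monoid isomorphism $[A,R]_{M_2}\to[A,M_\infty R]_{M_2}$. Surjectivity: given $\varphi:A\to M_\infty R$, its image lands in some $M_nR$, and I use the $C_2$-structure to ``fold'' $M_nR$ back into $R$. Concretely, the block sum operation $\boxplus$ of \eqref{map:boxplus} lets me compress $M_nR\cong M_n(R)$ into $R$ via the $n$-fold iterated $\boxplus$ built from the isometries $x_i,y_i$; this compression map $M_nR\to R$ is a homomorphism on the relevant corner, and Lemma \ref{lem:tensosum} is precisely the tool guaranteeing the resulting map is well-defined up to $M_2$-homotopy independent of how the folding is ordered. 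Thus every class in $[A,M_\infty R]_{M_2}$ has a representative factoring through $R$. Injectivity: if $\varphi,\psi:A\to R$ become $M_2$-homotopic after including into $M_\infty R$, I apply the same $\boxplus$-compression to the homotopy (which lives in $M_\infty(R[t])$ and again lands at a finite level) to pull it back to an $M_2$-homotopy in $R$; here one must check the compression is compatible with the $M_2$-homotopy relation, which is where Lemma \ref{lem:tensosum} enters a second time to reconcile the $M_2$ introduced by the homotopy with the $M_2$ introduced by compression.

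I expect the main obstacle to be verifying that the $\boxplus$-compression $M_nR\to R$ interacts correctly with the $M_2$-homotopy relation in part ii): the operation $\boxplus$ is only associative and commutative up to $M_2$-homotopy, so iterating it to fold $M_nR$ into $R$ produces a map that is canonical only at the level of $M_2$-homotopy classes, and one must be careful that applying it to a homotopy (a path) rather than to a single homomorphism still yields a genuine $M_2$-homotopy rather than merely endpoints that agree. Lemma \ref{lem:tensosum} and the finite-level reduction coming from finite generation of $A$ are exactly what make this bookkeeping go through; the rest is routine naturality of the corner inclusions and of $\boxplus$.
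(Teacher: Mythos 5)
Your overall strategy matches the paper's: reduce everything to finite matrix levels using that $A$ is finitely generated, and in part ii) use the Cuntz isometries to compress matrix algebras over $R$ back into $R$. But one step in your part i) is not justified as stated. You claim that the composite $M_\infty R\xrightarrow{\iota_2} M_2(M_\infty R)\cong M_\infty R$ is an endomorphism of $M_\infty R$ that is \emph{homotopic to the identity}, citing ``$2\times 2$ rotation matrices in the $t$-variable''. In the polynomial setting the rotation trick only yields an \emph{$M_2$-homotopy} (that is exactly the content of \cite{dwkk}*{Lemma 2.3}), so asserting a plain polynomial homotopy on all of $M_\infty R$ essentially restates the question part i) asks, namely whether $M_2$-homotopy implies homotopy for maps into $M_\infty R$; a single global homotopy of endomorphisms of $M_\infty R$ would require a coherent family of rotations across all matrix sizes, which you do not construct. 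The correct and sufficient statement is the finite-level one, which is how the paper argues: a homomorphism, and likewise a homotopy, out of the finitely generated $A$ lands in some $M_{2^n}R$, and $M_2$-homotopy of maps into $M_{2^n}R$ is by definition homotopy of the corresponding maps into $M_{2^{n+1}}R$; hence $[A,M_\infty R]=\colim_n[A,M_{2^n}R]=\colim_n[A,M_{2^n}R]_{M_2}=[A,M_\infty R]_{M_2}$. Since you do invoke finite generation at the outset, your argument is repairable by replacing the global claim with this colimit interchange.

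For part ii), your iterated $\boxplus$-folding of $M_nR$ into $R$ runs into the coherence issues you yourself flag (associativity of $\boxplus$ only up to $M_2$-homotopy, and applying the folding to a homotopy rather than to a single map). The paper sidesteps all of this: since $M_2R$ is again a $C_2$-algebra, the colimit argument of part i) reduces everything to the single step $[A,R]_{M_2}\to[A,M_2R]_{M_2}$, which is handled by one explicit compression $\ad(x,y):M_2R\to\iota_2(R)$ with $x=\epsilon_{1,1}\otimes x_1+\epsilon_{1,2}\otimes x_2$ and $y=\epsilon_{1,1}\otimes y_1+\epsilon_{2,1}\otimes y_2$, together with \cite{dwkk}*{Lemma 2.3} to see that $\inc\circ\ad(x,y)$ is $M_2$-homotopic to the identity of $M_2R$; Lemma \ref{lem:tensosum} is needed only to check that the map is a homomorphism of monoids. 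Restructuring your part ii) this way removes the bookkeeping you identify as the main obstacle.
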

\begin{proof} 
\item[i)] Because $A$ is finitely generated, 
\[
[A,M_\infty R]=\colim_n[A, M_{2^n}R]=\colim_n[A, M_{2^n}R]_{M_2}=[A,M_\infty R]_{M_2}.
\] 
\item[ii)]
Because $R$ is an $C_2$-algebra, the map $[A,R]_{M_2}\to [A,M_\infty R]_{M_2}$ is a monoid homomorphism by Lemma \ref{lem:tensosum}. We have to prove that it is bijective. Observe that $M_2R$ is again a $C_2$-algebra. Hence in view of the proof of part i), it suffices to show that
$[A,R]_{M_2}\to [A,M_2R]_{M_2}$ is bijective. Let $x=\epsilon_{1,1}\otimes x_1+\epsilon_{1,2}\otimes x_2$ and $y=\epsilon_{1,1}\otimes y_1
+\epsilon_{2,1}\otimes y_2$. By \cite{dwkk}*{Lemma 2.3}, the following diagram is $M_2$-homotopy commutative
\[
\xymatrix{M_2R\ar@{=}[dr]\ar[r]^{\ad(x,y)}&\iota_2(R)\ar[d]^{\inc}\\
                                          & M_2R.}
\]
It follows that the map of ii) is surjective. Injectivity follows similarly. 
\end{proof}

\begin{lem}\label{lem:adxy}
Let $\phi,\psi:A\to R$ be algebra homomorphisms with $R$ unital. Assume that there are $n\ge 1$ and $u\in\Gl_n(R)$ such that $\ad(u)\iota_n\phi=\iota_n\psi$. 
Then there are elements $x,y\in R$ such that $\ad(x,y)\phi=\psi$. If moreover $A$, $\phi$ and $\psi$ are unital, then we may choose $x$ invertible and $y=x^{-1}$. 
\end{lem}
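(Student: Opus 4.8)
The plan is to read off $x$ and $y$ as the $(1,1)$-entries of $u$ and $u^{-1}$. Write $u=(u_{ij})$ and $u^{-1}=(v_{ij})$ with $u_{ij},v_{ij}\in R$, and recall that $\iota_n\phi(a)=\phi(a)\epsilon_{1,1}$, so that the hypothesis $\ad(u)\iota_n\phi=\iota_n\psi$ reads $u\,(\phi(a)\epsilon_{1,1})\,u^{-1}=\psi(a)\epsilon_{1,1}$ for every $a\in A$. Computing the $(i,j)$-entry of the left-hand side gives $u_{i1}\phi(a)v_{1j}$, so this matrix identity is equivalent to the family of relations $u_{i1}\phi(a)v_{1j}=\delta_{i,1}\delta_{j,1}\psi(a)$ in $R$. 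Its $(1,1)$-instance, $u_{11}\phi(a)v_{11}=\psi(a)$, motivates the choice $x=u_{11}$, $y=v_{11}$, for then $x\phi(a)y=\psi(a)$, i.e.\ $\ad(x,y)\phi=\psi$ --- provided $\ad(x,y)$ is a genuine homomorphism on $\phi(A)$.

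Thus the one nontrivial point is the compatibility condition of \eqref{map:adxy}, namely $\phi(a)\,yx\,\phi(a')=\phi(a)\phi(a')$ for all $a,a'\in A$. To obtain the vanishing relations I need, I would left-multiply the hypothesis by $u^{-1}$, getting $(\phi(a)\epsilon_{1,1})u^{-1}=u^{-1}(\psi(a)\epsilon_{1,1})$, and compare $(1,j)$-entries for $j\ne1$ to conclude that $\phi(a)v_{1j}=0$ whenever $j\ne1$. Feeding this into the identity $\sum_k v_{1k}u_{k1}=(u^{-1}u)_{11}=1$ collapses the sum to its $k=1$ term, giving $\phi(a)\phi(a')=\phi(a)\big(\sum_k v_{1k}u_{k1}\big)\phi(a')=\phi(a)v_{11}u_{11}\phi(a')=\phi(a)\,yx\,\phi(a')$. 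Hence $\ad(x,y)\colon\phi(A)\to R$ is well defined and $\ad(x,y)\phi=\psi$, as required.

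For the unital case, since $\phi(1)=\psi(1)=1$ we have $\iota_n\phi(1)=\iota_n\psi(1)=\epsilon_{1,1}$, so the hypothesis specializes to $u\epsilon_{1,1}u^{-1}=\epsilon_{1,1}$, i.e.\ $u$ commutes with $\epsilon_{1,1}$. Comparing entries in $u\epsilon_{1,1}=\epsilon_{1,1}u$ forces the off-diagonal entries of the first row and first column of $u$ to vanish, so $u=u_{11}\oplus u'$ is block diagonal with $u_{11}\in U(R)$ and $u'\in\GL_{n-1}(R)$; in particular $v_{11}=u_{11}^{-1}$. Taking $x=u_{11}$ and $y=x^{-1}=v_{11}$, the map $\ad(x,y)=\ad(x)$ is honest conjugation, automatically a homomorphism, and the computation above already gives $\ad(x)\phi=\psi$.

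I expect the verification of the homomorphism condition $\phi(a)\,yx\,\phi(a')=\phi(a)\phi(a')$ to be the only substantive step: everything turns on extracting the off-diagonal relation $\phi(a)v_{1j}=0$ $(j\ne1)$ from the hypothesis and then using $\sum_k v_{1k}u_{k1}=1$ to kill all but one summand. The remaining assertions are immediate bookkeeping with matrix entries, and the unital refinement only adds the observation that commuting with $\epsilon_{1,1}$ makes $u$ block diagonal.
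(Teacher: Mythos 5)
Your proposal is correct and follows essentially the same route as the paper: both take $x=u_{1,1}$ and $y=(u^{-1})_{1,1}$ and extract the needed relations $u_{1,1}\phi(a)(u^{-1})_{1,1}=\psi(a)$ and the vanishing of the off-corner products from the matrix identity. Your write-up in fact supplies the verification of the compatibility condition for $\ad(x,y)$ (via $\sum_k v_{1k}u_{k1}=1$) that the paper leaves implicit, and the block-diagonal observation in the unital case is an equivalent way of seeing $xy=yx=1$.
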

\begin{proof} Put $v=u^{-1}$. It follows from the identity $\ad(u)\iota_n\phi=\iota_n\psi$ that for every $a\in A$, $u_{1,1}\phi(a)v_{1,1}=\psi(a)$
and $u_{i,1}\phi(a)=\phi(a)u_{1,i}=0$ if $i\ne 1$. Hence $x=u_{1,1}$ and $y=v_{1,1}$ satisfy $\ad(x,y)\phi=\psi$ and if $\phi$
and $\psi$ are unital, then $xy=yx=1$.
\end{proof}

\begin{prop}\label{prop:homotopis}
Let $R$ be a unital, purely infinite simple, $K_0$-regular algebra and $n\ge 1$. Then the natural monoid maps
\[
[M_n,R]_{M_2}\to [M_n,M_\infty R]\setminus\{0\}\to kk(M_n,R)\cong kk(\ell,R)\cong K_0(R)
\]
are bijective. Moreover, for nonzero algebra homomorphisms $M_n\to M_\infty R$ as well as for unital algebra homomorphisms $M_n\to R$, being  homotopic is the same as being conjugate. 
\end{prop}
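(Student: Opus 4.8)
The plan is to separate the statement into the chain of three bijections and the two conjugacy assertions, reducing everything to bookkeeping with matrix units and idempotents together with the results of Section~\ref{sec:v1}. For the first arrow, note that $M_n$ is finitely generated and that $R$, being purely infinite simple and unital, is a $C_2$-algebra by \cite{agp}*{Proposition 1.5}; hence Lemma~\ref{lem:adhomo} supplies monoid isomorphisms $[M_n,R]_{M_2}\cong[M_n,M_\infty R]_{M_2}\cong[M_n,M_\infty R]$ matching the zero class with the zero class, so the first map restricts to a bijection on nonzero classes. The identifications $kk(M_n,R)\cong kk(\ell,R)\cong K_0(R)$ come from matricial stability of $kk$ and from $K_0$-regularity of $R$, which gives $kk(\ell,R)\cong KH_0(R)\cong K_0(R)$. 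Unwinding the $kk$-equivalence $\ell\to M_n$, $1\mapsto\epsilon_{11}$, one checks that the composite sends the class of $\phi$ to $[\phi(\epsilon_{11})]$; by Lemma~\ref{lem:idemzero} a nonzero $\phi$ has $\phi(\epsilon_{11})\neq0$, so the composite lands in $\cV(R)\setminus\{[0]\}=K_0(R)$ (Theorem~\ref{thm:kpis}). It therefore suffices to prove that $\phi\mapsto[\phi(\epsilon_{11})]$ is a bijection from nonzero classes onto $K_0(R)$.

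For surjectivity, write a nonzero class as $[e]$ with $e\in\Idem_1(R)\setminus\{0\}$ using Corollary~\ref{coro:k0pis}. The corner $eRe$ is again purely infinite simple and unital by \cite{agp}*{Corollary 1.7}, so applying Proposition~\ref{prop:vle} inside $eRe$ yields pairwise orthogonal idempotents $e_1,\dots,e_n\in eRe$ with each $e_i\sim e$, and choosing equivalences between them produces a matrix-unit system $\{e_{ij}\}$ with $e_{ii}=e_i$. The assignment $\epsilon_{ij}\mapsto e_{ij}$ defines a nonzero homomorphism $\phi:M_n\to R\subset M_\infty R$ with $[\phi(\epsilon_{11})]=[e]$, proving surjectivity already at the level of $R$ (which will also serve the $[M_n,R]_{M_2}$ statement).

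For injectivity I will show that two nonzero homomorphisms $\phi,\psi$ with $[\phi(\epsilon_{11})]=[\psi(\epsilon_{11})]$ are conjugate, and that conjugate homomorphisms are homotopic. Writing $e_{ij}=\phi(\epsilon_{ij})$, $f_{ij}=\psi(\epsilon_{ij})$, Corollary~\ref{coro:equiv} gives an MvN equivalence $(a,b):e_{11}\sim f_{11}$; then $x=\sum_i f_{i1}ae_{1i}$ and $y=\sum_i e_{i1}bf_{1i}$ satisfy $xy=\psi(1)$, $yx=\phi(1)$ and $\ad(x,y)\phi=\psi$, $\ad(y,x)\psi=\phi$ in the sense of \eqref{map:adxy}, so $\phi$ and $\psi$ are conjugate; the converse implication on $K_0$-classes is immediate. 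For the target $M_\infty R$, conjugate homomorphisms are $M_2$-homotopic by the Whitehead trick (the element $\diag(u,u^{-1})$ lies in $\GL_2(R)^0$, combined with Lemma~\ref{lem:adxy}), and $M_2$-homotopy equals homotopy in $M_\infty R$ by Lemma~\ref{lem:adhomo}(i). This simultaneously gives the first displayed bijection, injectivity, and the ``moreover'' assertion for nonzero homomorphisms into $M_\infty R$.

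The remaining and most delicate point is the ``moreover'' for unital homomorphisms $M_n\to R$, where honest (non-$M_2$) homotopy into the unital ring $R$ is required. Here conjugacy by a unit $u\in U(R)$ follows as above, since $\sum e_{ii}=1=\sum f_{ii}$ forces both $e_{11}\sim f_{11}$ and $1-e_{11}\sim1-f_{11}$, so Corollary~\ref{coro:equiv}(3) applies. To promote $\psi=\ad(u)\phi$ to an honest homotopy I plan to exploit that $R$ is properly infinite: as the unit is an infinite idempotent one may choose $x_1,x_2,y_1,y_2\in R$ with $y_ix_j=\delta_{i,j}$ and $x_1y_1+x_2y_2=1$, and then the internal Whitehead element $w=x_1uy_1+x_2u^{-1}y_2$ lies in $U(R)^0$. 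Since $\ad(w)\bigl(\ad(x_1,y_1)\phi\bigr)=\ad(x_1,y_1)\psi$, the path $\lambda\mapsto\ad(w(\lambda))$ yields $\ad(x_1,y_1)\phi\simh\ad(x_1,y_1)\psi$, and the same internal rotation (together with Proposition~\ref{prop:kv1pis}, whence $[U(R),U(R)]\subset U(R)^0$) should let me cancel the corner compression $\ad(x_1,y_1)$ against the identity so as to conclude $\phi\simh\psi$. The step I expect to be the main obstacle is exactly this de-stabilization: reconciling the non-unital compression $\ad(x_1,y_1)\phi$ with the unital $\phi$ while keeping the homotopy honest, which is where the properly infinite structure and the swindle implicit in $x_1y_1+x_2y_2=1$ must be used with care.
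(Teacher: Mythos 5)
Most of your argument tracks the paper's proof closely: the first bijection via Lemma \ref{lem:adhomo} and the $C_2$-structure of $R$, surjectivity onto $K_0(R)=\cV_1(R)\setminus\{[0]\}$ by manufacturing a system of matrix units from Proposition \ref{prop:vle} inside a corner, and injectivity by showing that $[\phi(\epsilon_{11})]=[\psi(\epsilon_{11})]$ forces conjugacy (the paper simply cites \cite{goodearl}*{Lemma 15.23(b)} where you write the intertwiners out; note that with the paper's convention $(a,b):e_{11}\sim f_{11}$, $a\in e_{11}Rf_{11}$, your formulas for $x$ and $y$ have $a$ and $b$ interchanged) and that conjugacy implies $M_2$-homotopy, hence homotopy into $M_\infty R$, by the Whitehead/corner trick of \cite{dwkk}*{Lemma 2.3}. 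Also, $\phi\neq0\Rightarrow\phi(\epsilon_{11})\neq0$ is just simplicity of $M_n$, not Lemma \ref{lem:idemzero}. Up to these trivia, everything through the $M_\infty R$ part of the ``moreover'' is correct and is the paper's argument.

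The genuine gap is in your last paragraph. First, the elements $x_1,x_2,y_1,y_2$ with $y_ix_j=\delta_{i,j}$ \emph{and} $x_1y_1+x_2y_2=1$ need not exist: those identities give $[1_R]=[x_1y_1]+[x_2y_2]=2[1_R]$, i.e.\ $[1_R]=0$ in $K_0(R)$, which fails for instance for $R=L_3$. A purely infinite simple unital ring carries a unital $C_2$-algebra structure, not an $L_2$-algebra structure, so the internal Whitehead element $w$ is not available in general. Second, even granting such elements, you yourself flag the de-stabilization of $\ad(x_1,y_1)\phi$ back to $\phi$ as unresolved, and it is: $\ad(x_1,y_1)\phi$ is not unital and no honest homotopy in $R$ joining it to $\phi$ is produced. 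Note that the paper does not attempt this direction at all: for unital homomorphisms $M_n\to R$ its proof establishes only that \emph{homotopic implies conjugate}, by passing to $M_\infty R$ and descending the conjugating invertible matrix to a unit of $R$ via Lemma \ref{lem:adxy} (which your Corollary \ref{coro:equiv} argument also achieves); the converse implication is supplied later only with extra input, e.g.\ in Theorem \ref{thm:mapl2}, where $\pi_0(U(L_2))=0$ (Corollary \ref{coro:kv1pis}) makes the conjugating unit homotopic to $1$. If you want the converse for general $R$, you would have to show the conjugating unit can be chosen in $U(R)^0$, and modifying it by elements of the commutant $\phi(M_n)'\cap R$ only changes its $K_1$-class by elements of $nK_1(R)$ -- so a new idea, or an extra hypothesis, is needed there.
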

\begin{proof} Because as explained above, any purely infinite simple unital algebra is a $C_2$-algebra, the map $[M_n,R]_{M_2}\to [M_n,M_\infty R]$ is an isomorphism of monoids by Lemma \ref{lem:adhomo}.  Since $(\iota_n)^*:kk(M_n,R)\to kk(\ell,R)=K_0R$ is an isomorphism, to prove that the map $[M_n,M_\infty R]\setminus\{0\}\iso kk(M_n,R)$ is surjective, it suffices, by Corollary \ref{coro:k0pis}, to show that the image of its composite with $\iota_n^*$ contains the class of every nonzero idempotent in $R$. Let $p\in \Idem_1 R\setminus\{0\}$; by Proposition \ref{prop:vle} we may choose $q\in \Idem_1R$
, $q\sim p$, and an embedding $\theta:M_n\to R$ sending $\epsilon_{1,1}\to q$. Hence the map of the proposition is surjective. If two homomorphisms $\phi,\psi\in\Hom_{\aha}(M_n,M_\infty R)$ induce the same $K_0$-element then they are conjugate by the argument of the proof of \cite{goodearl}*{Lemma 15.23(b)}, and therefore homotopic by Lemma \ref{lem:adhomo} and \cite{dwkk}*{Lemma 2.3}. From what we have just proved and Lemma \ref{lem:adxy}, it follows that if two unital homomorphisms $M_n\to R$ are homotopic then they are conjugate. This finishes the proof. 
\end{proof}

\begin{rem}\label{rem:simpnopis} Let $E$ be a finite graph such that $L(E)$ is simple. If $L(E)$ is not purely infinite, then it follows from  
\cite{libro}*{Lemma 2.9.5} and source elimination \cite{libro}*{Definition 6.3.26} that $L(E)\cong M_n$ for some $1\le n<\infty$. Hence, since $K_n$-regularity implies $K_{n-1}$-regularity \cite{vorst}, 
Proposition \ref{prop:homotopis} implies Theorem \ref{intro:kklift} in the case when $L(E)$ is simple and not pure infinite. 
\end{rem}

\section{Lifting \topdf{$K$}{K}-theory maps to algebra maps:\topdf{$K_0$}{K0}}\label{sec:k0lift}

Recall that a vertex $v\in E^0$ is \emph{singular} if it is either a sink or an infinite emitter, and that it is \emph{regular} otherwise. We write $\reg(E)$, $\sink(E)$, $\sour(E)$ and $\inf(E)$ for the sets of regular vertices, sinks, sources, and infinite emitters, and put $\sing(E)=\sink(E)\cup\inf(E)$.

Let $R$ and $S$ be unital algebras and $\xi:K_0(R)\to K_0(S)$. We call $\xi$ \emph{unital} if $\xi([1_R])=[1_S]$.

\begin{thm}\label{thm:k0liftr}
Let $E$ be a graph, $R$ a purely infinite simple unital algebra, and $\xi:K_0(L(E))\to K_0(R)$ a group homomorphism. Set $\iota:R\to M_\infty(R)$, $\iota(a)=\epsilon_{1,1}\otimes a$.
\item[i)] If $E$ is countable, then there exists a nonzero algebra homomorphism $\psi:L(E)\to M_\infty R$ such that $K_0(\psi)=K_0(\iota)\xi$. 
\item[ii)] If $E$ is finite, then there exists a nonzero algebra homomorphism $\psi:L(E)\to R$ such that $K_0(\psi)=\xi$.
\item[iii)] If $E^0$ is finite, $E^1$ countable and $\xi$ unital, then there is a unital homomorphism $\phi:L(E)\to R$ such that 
$K_0(\phi)=\xi$.
\end{thm}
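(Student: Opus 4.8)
The plan is to produce the homomorphism directly, by exhibiting a Cuntz--Krieger $E$-family in the target algebra with prescribed $K_0$-data. Recall that $K_0(L(E))$ is generated by the classes $[v]$ of the vertices subject only to the relations $[v]=\sum_{s(e)=v}[r(e)]$ for regular $v$ (\cite{libro}), and that by the universal property an algebra homomorphism out of $L(E)$ is the same thing as a choice of images $P_v$ of the vertices, $S_e$ of the edges and $T_e$ of the ghost edges satisfying the defining relations: the $P_v$ are orthogonal idempotents, $P_{s(e)}S_e=S_e=S_eP_{r(e)}$, $P_{r(e)}T_e=T_e=T_eP_{s(e)}$, $T_eS_f=\delta_{e,f}P_{r(e)}$, and $\sum_{s(e)=v}S_eT_e=P_v$ for regular $v$. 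So it suffices to build such a family in the appropriate target ($M_\infty R$ for (i), $R$ for (ii) and (iii)) with $[P_v]$ equal to the prescribed class $g_v:=\xi([v])$ (read in $K_0(M_\infty R)\cong K_0(R)$ in case (i)); then $K_0(\psi)([v])=[P_v]=g_v$ gives $K_0(\psi)=\xi$ (resp. $K_0(\iota)\xi$) on generators, and $P_v\ne0$ yields the nonzero/unital conclusions.

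First I would realize each $g_v$ by a \emph{nonzero} idempotent. By Corollary \ref{coro:k0pis} every class of $K_0(R)$, including $0$, equals $[p]$ for some nonzero $p\in\Idem_1(R)$; the point of insisting on nonzero representatives is that later every $Q_e$ must satisfy $Q_e\sim P_{r(e)}\ne0$. For the finitely many vertices of cases (ii) and (iii) I would then replace the chosen idempotents by pairwise orthogonal ones of the same classes, using the ``moreover'' part of Proposition \ref{prop:vle}; for the countably many vertices of case (i) I would instead place the representatives in distinct matrix blocks of $M_\infty R$, which makes them orthogonal automatically.

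The key step is, at each regular vertex $v$, to split $P_v$ as an orthogonal sum $\sum_{s(e)=v}Q_e$ with $[Q_e]=g_{r(e)}$; then by Corollary \ref{coro:equiv} (since $[Q_e]=[P_{r(e)}]$) there is a Murray--von Neumann equivalence $(S_e,T_e)\colon Q_e\sim P_{r(e)}$, and orthogonality of the $Q_e$ together with that of the $P_v$ gives exactly $T_eS_f=\delta_{e,f}P_{r(e)}$ and $\sum_{s(e)=v}S_eT_e=P_v$. To obtain the \emph{exact} equality $\sum_eQ_e=P_v$ I would use a transport trick: first produce pairwise orthogonal $Q_e'$ with $[Q_e']=g_{r(e)}$ by Proposition \ref{prop:vle}; their sum $Q$ then satisfies $[Q]=\sum_{s(e)=v}g_{r(e)}=g_v=[P_v]$, the middle equality being $\xi$ applied to the defining relation $[v]=\sum_{s(e)=v}[r(e)]$, so $Q\sim P_v$ by Corollary \ref{coro:equiv}; conjugating the decomposition $Q=\sum_eQ_e'$ across a fixed equivalence $(a,b)\colon Q\sim P_v$ (that is, setting $Q_e:=bQ_e'a$) transports it to an orthogonal decomposition $P_v=\sum_eQ_e$ with the correct classes. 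In case (iii) the very same trick, applied now to $\sum_vP_v'$ and using that $\xi$ is unital (so $\sum_vg_v=[1_R]$), lets me arrange $\sum_vP_v=1_R$, whence $\phi(1_{L(E)})=1_R$.

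I expect the main obstacle to be the infinite-emitter bookkeeping that appears in (i) and (iii): at an infinite emitter $v$ one must fit a \emph{countable} orthogonal family $Q_e\le P_v$ (over the edges $e$ with $s(e)=v$) of prescribed classes, now with no sum constraint, and then attach partial isometries. This forces me to exploit that every nonzero idempotent of a purely infinite simple ring is properly infinite, so that $P_v$ contains a countable orthogonal family of copies of itself and each prescribed class embeds in one of them; organizing this family—and, in (i), the parallel book-keeping of distributing infinitely many vertex idempotents and the partial isometries linking them across the blocks of $M_\infty R$—is where the argument is most delicate. Everything else is formal: once the Cuntz--Krieger family is in place, the universal property of $L(E)$ delivers $\psi$ (resp. the unital $\phi$) with the asserted effect on $K_0$.
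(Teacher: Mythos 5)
Your construction is correct and rests on the same engine as the paper's proof: realize the prescribed $K_0$-data by orthogonal nonzero idempotents via Theorem \ref{thm:kpis}, Corollary \ref{coro:k0pis} and Proposition \ref{prop:vle}, join them by Murray--von Neumann equivalences into a Cuntz--Krieger $E$-family, and invoke the universal property of $L(E)$. The differences are organizational but worth recording. First, the paper indexes its idempotents by edges and sinks, choosing pairwise orthogonal $p_e$ with $[p_e]=\xi[ee^*]$ at the outset and \emph{defining} the image of a regular vertex to be $\sum_{s(e)=v}p_e$; this makes your transport trick (pre-choosing $P_v$ and then conjugating an orthogonal decomposition of prescribed classes into the corner $P_vRP_v$) unnecessary, though your version of it is sound. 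Second, and more substantially, the paper carries out the direct construction only for countable \emph{row-finite} graphs and reduces the general countable case to it by desingularization, since $L(E)\to L(E_\delta)$ is a $K_0$-isomorphism; it never confronts infinite emitters. Your plan to fit a countable orthogonal family of prescribed classes under $P_v$ at an infinite emitter does work --- iterate the splitting $p=q+r$, $q\sim p$, $r\ne 0$ inside the purely infinite simple corner $P_vRP_v$ to get infinitely many orthogonal nonzero subidempotents, each of whose corners has the same $K_0$ --- but this is exactly the bookkeeping you flag as delicate, and desingularization would spare you all of it (it is also the cleanest way to handle part (iii), where $E^1$ may be infinite; the paper gets (iii) by compressing the $M_\infty R$-valued map of (i) along an equivalence $\psi(1)\sim\epsilon_{1,1}$, which exists because $\xi$ is unital, rather than normalizing $\sum_vP_v=1$ as you do). One small caveat: your opening claim that $K_0(L(E))$ is presented by the vertex classes \emph{subject only to} the relations $[v]=\sum_{s(e)=v}[r(e)]$ is the row-finite presentation and is not literally correct in the presence of infinite emitters; fortunately you only use that the vertex classes generate and that these relations hold, both of which are true in general.
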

\begin{proof}
Assume first that $E$ is countable and row-finite. By Theorem \ref{thm:kpis} there are orthogonal idempotents $\{p_e:e\in E^1\}\cup\{p_v:v\in\sing(E)\}\subset \Idem_\infty(R)\setminus\{0\}$ such that $[p_v]=\xi[v]$ and $[p_e]=\xi[ee^*]$ in $K_0(R)$ $(v\in \sink(E)$, $e\in E^1$). If $e\in E^1$ and $r(e)\in\reg(E)$ then  
\[
[p_e]=[\sum_{f\in E^1, s(f)=r(e)}p_f].
\]
Hence for $\sigma_f=\sum_{f\in E^1, s(f)=r(e)}p_f$ there are elements $x_e,y_e\in M_\infty(R)$ implementing an MvN equivalence $p_e\sim\sigma_e$. Similarly if $e\in E^1$ and $r(e)=v\in\sink(E)$, then there is an MvN equivalence $(x_e,y_e):p_e\sim p_v$ with $x_e, y_e\in M_\infty R$. One checks that the prescriptions 
\[
\psi(e)=x_e,\psi(e^*)=y_e\quad (e\in E^1),\quad \psi(v)=p_v\quad (v\in\sink(E))
\]
define a nonzero algebra homomorphism $\psi:L(E)\to M_\infty R$. Let $\tau:M_\infty M_\infty\to M_\infty M_\infty$, $\tau(x\otimes y)=y\otimes x$; one checks that $\tau\otimes Id_R$ induces the identity of $K_0(M_\infty R)$. By construction $K_0(\psi)$ agrees with $K_0(\tau\otimes 1)K_0(\iota)\xi=K_0(\iota)\xi$ on the classes of those vertices which are sinks and on those of  elements of the form $ee^*$ $(e\in E^1)$. Since the latter generate $K_0(L(E))$ (by \cite{libro}*{Theorem 3.2.5}), we have $K_0(\psi)=K_0(\iota)\xi$. 
 
For general countable $E$, let $E_\delta$ be a desingularization and $f:L(E)\to L(E_\delta)$ the canonical homomorphism \cite{aap}*{Section ~5}; then $K_0(f)$ is an isomorphism. Hence by what we have just proved, there exists an algebra homomorphism $\psi':L(E_\delta)\to M_\infty(R)$ such that $K_0(\psi')=K_0(\iota)\xi K_0(f)^{-1}$. Then
$\phi=\psi'f$ satisfies $K_0(\psi)=K_0(\iota)\xi$. This proves i). Next assume that $E^1$ is countable, that $E^0$ is finite and that $\xi([1_{L(E)}]=[1_R]$. Let $\psi:L(E)\to M_\infty(R)$ be a homomorphism such that $K_0(\iota)\xi=K_0(\psi)$. Set $p=\psi(1)$; then $\psi(L(E))\subset pM_\infty (R)p$ and there is an MvN equivalence $(x,y):p\sim \epsilon_{1,1}$. It follows that there is a unique unital homomorphism $\phi:L(E)\to R$ such that $\iota\phi=\ad(y,x)\psi$. By \cite{dwkk}*{Lemma 2.3}, $\phi$ satisfies the requirements of iii). Finally assume that $E$ is finite. By Corollary \ref{coro:k0pis} and Proposition \ref{prop:vle} there are orthogonal idempotents $\{p_e:e\in E^1\}\cup\{p_v:v\in\sink(E)\}\subset \Idem_1(R)\setminus\{0\}$ such that $[p_v]=\xi[v]$ and $[p_e]=\xi[ee^*]$ $(v\in \sink(E)$, $e\in E^1$). If $e\in E^1$ and $r(e)\notin\sink(E)$ then by Corollary \ref{coro:k0pis}, for $\sigma_e$ as in the proof of Theorem \ref{thm:k0liftr} there are elements $x_e\in p_e R \sigma_e$ and $y_e\in\sigma_e R p_e$ such that $p_e=x_ey_e$ and $\sigma_e=y_ex_e$. Similarly, if $e\in E^1$ and $r(e)=v\in\sink(E)$, then there are $x_e\in p_e R p_v$ and $y_e\in p_v R p_e$ such that $y_ex_e=p_v$ and $x_ey_e=p_e$. One checks that the prescriptions 
\[
\psi(e)=x_e,\psi(e^*)=y_e\quad (e\in E^1),\quad \psi(v)=p_v\quad (v\in\sink(E))
\]
define a nonzero algebra homomorphism $\psi:L(E)\to R$ such that $K_0(\psi)=\xi$.   
\end{proof}

\begin{coro}\label{coro:tododentro}
Let $R$ be a unital purely infinite algebra and $E$ a graph such that $L(E)$ is simple. 

\item[i)] If $E$ is countable, then $L(E)$ embeds as a subalgebra of $M_\infty R$.
 
\item[ii)] If $E^1$ is countable, $E^0$ is finite and $[1_R]=0$ in $K_0(R)$, then $L(E)$ embeds as a unital subalgebra
of $R$. 

\item[iii)] If $E$ is finite then $L(E)$ embeds as a subalgebra of $R$. 
\end{coro}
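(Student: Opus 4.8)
The plan is to deduce all three embeddings from Theorem \ref{thm:k0liftr} applied to the canonical purely infinite simple algebra $L_2$, together with the elementary but crucial observation that, since $L(E)$ is simple, any \emph{nonzero} algebra homomorphism out of $L(E)$ is automatically injective. Thus "nonzero homomorphism into $X$" will always upgrade to "embedding into $X$."

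First I would treat the model case $R=L_2$. Since $K_0(L_2)=0$, the zero map $\xi=0\colon K_0(L(E))\to K_0(L_2)$ is the only group homomorphism available, and it is unital because $[1_{L_2}]=0$. Feeding $\xi=0$ into the three parts of Theorem \ref{thm:k0liftr} produces: a nonzero homomorphism $L(E)\to M_\infty L_2$ when $E$ is countable; a nonzero homomorphism $L(E)\to L_2$ when $E$ is finite; and a unital (hence nonzero) homomorphism $L(E)\to L_2$ when $E^0$ is finite and $E^1$ countable. By simplicity of $L(E)$ these are embeddings, so $L(E)$ embeds into $M_\infty L_2$, resp. into $L_2$, unitally in the last case.

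Next I would pass from $L_2$ to a general unital purely infinite $R$ by embedding $L_2$ into $R$. Observe that a unital homomorphism $L_2\to R$ is exactly a choice of $x_1,x_2,y_1,y_2\in R$ with $y_ix_j=\delta_{i,j}1_R$ and $x_1y_1+x_2y_2=1_R$; equivalently, a decomposition of $1_R$ as a sum of two orthogonal idempotents each Murray--von Neumann equivalent to $1_R$, and any such homomorphism is injective since $L_2$ is simple. More generally, any nonzero idempotent $p\in R$ with $p\sim p\oplus p$ gives a unital embedding $L_2\hookrightarrow pRp\subseteq R$. Because $R$ is purely infinite such an idempotent exists, yielding a (possibly non-unital) embedding $L_2\hookrightarrow R$, which in turn induces an embedding $M_\infty L_2\hookrightarrow M_\infty R$. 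Composing these with the model-case embeddings of the previous paragraph proves i) and iii). For ii), the extra hypothesis $[1_R]=0$ in $K_0(R)$ is precisely what forces $1_R\sim 1_R\oplus 1_R$, so that $L_2$ embeds \emph{unitally} into $R$; composing with the unital embedding $L(E)\hookrightarrow L_2$ gives the desired unital embedding $L(E)\hookrightarrow R$.

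The step I expect to be the main obstacle is the construction of the copies of $L_2$ inside a general, \emph{non-simple} purely infinite $R$: concretely, that every nonzero idempotent of $R$ is properly infinite in the sharp form $p\sim p\oplus p$, and that $[1_R]=0$ in $K_0(R)$ implies $1_R\sim 1_R\oplus 1_R$. In the purely infinite simple case both facts drop out of the earlier results of this section: by Corollary \ref{coro:k0pis} and Proposition \ref{prop:vle} the neutral element $0\in K_0(R)=\cV_1(R)\setminus\{[0]\}$ is represented by a nonzero idempotent $e$ with $e\sim e\oplus e$, and $[1_R]=0$ then reads as $1_R\sim 1_R\oplus 1_R$ directly. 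In the non-simple case these statements are no longer governed by Theorem \ref{thm:kpis} and must instead be extracted from the structure theory of purely infinite rings; isolating exactly this input, and checking that it suffices to realize the $x_i,y_i$ above, is where the real work lies.
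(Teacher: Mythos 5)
Your proof is correct, but it takes a genuinely different (and longer) route than the paper. The paper's entire proof is: apply Theorem \ref{thm:k0liftr} with target $R$ itself and $\xi=0$; the resulting nonzero (resp.\ unital, using that $\xi=0$ is unital exactly when $[1_R]=0$) homomorphisms $L(E)\to M_\infty R$, $L(E)\to R$ are injective because $L(E)$ is simple. You instead apply Theorem \ref{thm:k0liftr} only to the model target $L_2$ and then splice in an embedding $L_2\hookrightarrow R$ (unital when $[1_R]=0$), which forces you to prove the auxiliary fact that some nonzero idempotent $p$ of $R$ satisfies $p\sim p\oplus p$ orthogonally inside $pRp$, and that $[1_R]=0$ upgrades this to $p=1$. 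That auxiliary step is fine and does follow, as you say, from Theorem \ref{thm:kpis}, Corollary \ref{coro:k0pis} and Proposition \ref{prop:vle} (the neutral element of the group $\cV(R)\setminus\{[0]\}$ is an idempotent class, and Proposition \ref{prop:vle} lets you realize the relation $[e]=[e]+[e]$ by orthogonal idempotents in the corner), but it is extra work that the direct application avoids entirely. As for your worry about non-simple purely infinite $R$: the hypothesis of the corollary should be read as ``purely infinite simple'' --- the paper never defines ``purely infinite'' in isolation, and its own proof invokes Theorem \ref{thm:k0liftr}, which assumes simplicity --- so the obstacle you isolate at the end is not one you actually need to overcome. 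One incidental payoff of your factorization through $L_2$ is that it is close in spirit to Remark \ref{rem:brosore}, where the unital copy of $L_2$ inside $R$ is combined with the Brownlowe--S{\o}rensen embedding theorem to drop the simplicity hypothesis on $L(E)$.
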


\begin{proof} Apply Theorem \ref{thm:k0liftr} to the trivial homomorphism $\xi=0$. 
\end{proof}

\begin{rem}\label{rem:brosore}
It follows from Corollary \ref{coro:tododentro} that any purely infinite algebra $R$ such that $[1_R]=0$ contains $L_2$ as a unital subalgebra. Hence by \cite{brosore}*{Theorem 4.1}, if $E$ is countable (resp. finite), then $L(E)$ embeds as a subalgebra (resp. a unital subalgebra) of $R$,  independently of whether $L(E)$ is simple or not. 
\end{rem}

\begin{coro}\label{coro:sospe1} Let $E$ be a countable graph with finite $E^0$. Assume that $K_0(L(E))$ is finite and let $d_1,\dots,d_n$, $d_i\backslash d_{i+1}$ be its invariant factors. Let $j:\aha\to kk$ be canonical functor (\cite{kkwt}). Then there is an algebra homomorphism $\psi:L(E)\to M_\infty(\bigoplus_{i=1}^nL_{d_i+1})$ such that $j(\psi)$ is an isomorphism in $kk$. If moreover $L(E)$ is purely infinite simple then there is an algebra homomorphism $\phi:\bigoplus_{i=1}^nL_{d_i+1}\to M_\infty L(E)$
such that $\iota^{-1}j(\phi)$ and $\iota^{-1}j(\psi)$ are inverse isomorphisms in $kk$. If $E$ is finite then the same holds with $L(E)$ substituted for $M_\infty(L(E))$. 
\end{coro}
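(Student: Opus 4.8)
The plan is to manufacture $\psi$ and $\phi$ by lifting the $K_0$-isomorphism $\xi$ and its inverse through Theorem \ref{thm:k0liftr}, and then to promote the resulting $K_0$-isomorphisms to genuine $kk$-isomorphisms using the computation of $kk$ between Leavitt path algebras from \cite{dwkk}. Write $R=\bigoplus_{i=1}^n L_{d_i+1}$. This is the Leavitt path algebra $L(G)$ of the finite graph $G=\bigsqcup_{i=1}^n\cR_{d_i+1}$ (a disjoint union of roses), so additivity of $K$-theory gives $K_0(R)=\bigoplus_i\Z/d_i$ and $K_1(R)=0$, using $K_0(L_{d_i+1})=\Z/d_i$ and $K_1(L_{d_i+1})=0$; moreover each $L_{d_i+1}$ is purely infinite simple unital (as $d_i\ge 2$). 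By the very definition of the invariant factors there is an isomorphism $\xi:K_0(L(E))\iso\bigoplus_i\Z/d_i=K_0(R)$, and I write $\xi_i$ for its composite with the projection onto the $i$-th factor.

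The crucial preliminary is the following structural remark together with the $kk$-theoretic input. Since $K_0(L(E))=\coker(I-A_E^t\colon \Z^{\reg(E)}\to\Z^{E^0})$ is finite and $E^0$ is finite, the cokernel can be finite only if $|\reg(E)|=|E^0|$, forcing $\reg(E)=E^0$ and $I-A_E^t$ injective; hence $E$ has no sinks and no infinite emitters and is therefore a finite graph, and by the fundamental triangle of \cite{dwkk} we get $K_1(L(E))=\ker(I-A_E^t)=0$ while $j(L(E))$ and $j(R)$ lie in the thick subcategory of $kk$ generated by $j(\ell)$. Combining $K_1(L(E))=K_1(R)=0$ with the universal coefficient theorem of \cite{dwkk} (the Ext term involves only $K_1$ and so vanishes) yields, for $X,Y\in\{L(E),R\}$, natural isomorphisms $kk(X,Y)\cong\Hom(K_0(X),K_0(Y))$ that are faithful and compatible with composition. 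This is the mechanism by which $K_0$ will control everything.

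Next I would build $\psi$. Applying Theorem \ref{thm:k0liftr} to each $\xi_i:K_0(L(E))\to K_0(L_{d_i+1})$ (target purely infinite simple unital) produces nonzero homomorphisms $\psi_i:L(E)\to L_{d_i+1}$ with $K_0(\psi_i)=\xi_i$. Assembling them diagonally gives $\psi_0:L(E)\to R$ with $K_0(\psi_0)=\xi$, and composing with $\iota$ and using $M_\infty(\bigoplus_i L_{d_i+1})\cong\bigoplus_i M_\infty L_{d_i+1}$ gives $\psi:L(E)\to M_\infty R$ with $K_0(\psi)=K_0(\iota)\xi$, an isomorphism. Under the identification of the previous paragraph, $\iota^{-1}j(\psi)$ corresponds to $\xi$ and is therefore an isomorphism $L(E)\to R$ in $kk$.

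For the reverse map, assume $L(E)$ is purely infinite simple and apply Theorem \ref{thm:k0liftr} with source the finite-graph algebra $R=L(G)$ and target $L(E)$, to the homomorphism $\xi^{-1}:K_0(R)\to K_0(L(E))$; part (ii) (since $G$ is finite) gives a nonzero $\phi:R\to L(E)\subset M_\infty L(E)$ with $K_0(\phi)=\xi^{-1}$, which is exactly the refinement asserted in the last sentence of the statement, and part (i) gives the $M_\infty L(E)$ version in general. Under the faithful, composition-compatible identification above, $\iota^{-1}j(\phi)$ corresponds to $\xi^{-1}$, so the composites $\iota^{-1}j(\phi)\circ\iota^{-1}j(\psi)$ and $\iota^{-1}j(\psi)\circ\iota^{-1}j(\phi)$ correspond to $\xi^{-1}\xi=\id$ and $\xi\xi^{-1}=\id$, whence the two maps are mutually inverse isomorphisms in $kk$, as required. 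I expect the main obstacle to be precisely this passage from the $K_0$-level to $kk$: establishing that the hypotheses force $K_1(L(E))=0$ and invoking the $kk$-computation of \cite{dwkk} to make $kk(X,Y)\cong\Hom(K_0(X),K_0(Y))$ faithfully and functorially. Once that identification is in place, the constructions via Theorem \ref{thm:k0liftr} and the matrix-stability isomorphisms are routine bookkeeping.
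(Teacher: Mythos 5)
Your construction of $\psi$ and $\phi$ via Theorem \ref{thm:k0liftr} (lifting each projection $\xi_i$ and then lifting $\xi^{-1}$ from the rose-graph algebra) is exactly what the paper does, and your side observation that the hypotheses force $E$ to be finite is correct. But the mechanism you use to pass from $K_0$ to $kk$ contains a genuine error: $K_1(L_{d_i+1})$ is \emph{not} zero in general, and neither is $K_1(L(E))$. By the exact sequence recalled in Remark \ref{rem:xi0detxi1}, when $\ker(I-A_E^t)=0$ one has $K_1(L(E))\cong K_0(L(E))\otimes K_1(\ell)$; in particular $K_1(L_{d+1})\cong \ell^*/(\ell^*)^{d}$, which is nonzero for, say, $\ell=\Q$ and $d=2$ (only $L_2$ has vanishing $K_1$). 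Consequently your claim that the Ext term of the universal coefficient sequence vanishes is unjustified: the relevant term is $\Ext^1_\Z(K_0(L(E)),K_1(R))$, which need not die when $K_0(L(E))$ is finite and $K_1(R)\cong K_0(R)\otimes\ell^*$ is a nontrivial torsion group. So the ``faithful, composition-compatible identification'' $kk(X,Y)\cong\Hom(K_0(X),K_0(Y))$ on which your entire passage to $kk$ rests is not available over a general field, and with it goes your argument both that $j(\psi)$ is a $kk$-isomorphism and that the two composites are the identity.

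The paper's route avoids this. It shows $K_0(\psi)$ is an isomorphism exactly as you do, but then deduces that $K_1(\psi)$ is \emph{also} an isomorphism by naturality: by \cite{dwkk}*{Lemma 7.2} the restriction of $K_1(\psi)$ to $K_0\otimes K_1(\ell)$ is $K_0(\psi)\otimes\id$, and since $\ker(I-A_E^t)=0$ on both sides (because both $K_0$ groups are finite) this restriction is everything. It then invokes \cite{dwkk}*{Proposition 5.10}, a Whitehead-type statement asserting that a $kk$-morphism between such algebras inducing isomorphisms on $K_0$ and $K_1$ is a $kk$-isomorphism; no injectivity of $kk\to\Hom(K_0,K_0)$ is ever needed. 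To repair your write-up you should replace the ``$K_1=0$ plus UCT'' step by this pair of citations (or, if you want to keep a UCT-style argument, restrict to fields with divisible unit group, which is not what the corollary asserts).
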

\begin{proof}
Assume that $E$ is countable with finite $E^0$. By part ii) of Theorem \ref{thm:k0liftr}, for each $1\le i\le n$, there is a homomorphism $\psi_{i}:L(E)\to M_\infty L_{d_i+1}$ such that $K_0(\psi_i)$ is the projection from $K_0(L(E))=\bigoplus_{j=1}^n\Z/d_j$  onto the copy of $\Z/d_i$. 
The map 
\[
\psi=(\psi_1,\dots,\psi_{n}):L(E)\to M_\infty(\bigoplus_{i=1}^bL_{d_i+1})
\] 
then induces an isomorphism in $K_0$. In view of \cite{dwkk}*{Lemma 7.2} and of the fact that, since $K_0(L(E))$ is finite, $\ker(I-A_E^t)=0$, this implies that $K_1(\psi)$ is an isomorphism too. Hence $j(\psi)$ is an isomorphism by \cite{dwkk}*{Proposition 5.10}. Assume furthermore that $L(E)$ is purely infinite simple. Consider the graph 
\[
F=\coprod_{i=1}^n\cR_{d_i+1}.
\] 
Then $L(F)=\bigoplus_{j=1}^nL_{d_{j+1}}$. The homomorphism $\phi$ of the corollary is obtained by applying Theorem \ref{thm:k0liftr} to $\xi=K_0(\psi)^{-1}\iota:K_0(L(F))\to K_0(L(E))$. This proves the first assertion of the corollary; the second, for finite $E$, is proved similarly, using part iii) of Theorem \ref{thm:k0liftr}. 
\end{proof}

Let $E$ be a finite graph; if $X\subset L(E)$, write $\mspan(X)$ for the subspace generated by $X$. In the following proposition and elsewhere we consider the following ``diagonal" subalgebra of $L(E)$
\[
DL(E)=\mspan(\sink(E)\cup\{ee^*: e\in E^1\})\subset L(E).
\]

Proposition \ref{prop:k0agreeepsi} below will be needed in the next section.

\begin{prop}\label{prop:k0agreeepsi}
Let $E$ and $R$ be as in part iii) of Theorem \ref{thm:k0liftr}. Assume that $L(E)$ is simple and let $\phi, \psi : L(E) \to R$ be nonzero algebra homomorphisms such that $K_0(\phi) = K_0 (\psi)$. Then there exists an algebra homomorphism $\psi':L(E) \to R$ such that $j(\psi) = j(\psi')$ in $kk$ and $\psi'_{|DL(E)}=\phi_{|DL(E)}$.
\end{prop}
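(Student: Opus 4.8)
The goal is, given nonzero $\phi,\psi:L(E)\to R$ with $K_0(\phi)=K_0(\psi)$, to modify $\psi$ within its $kk$-class so that it agrees with $\phi$ on the diagonal subalgebra $DL(E)=\mspan(\sink(E)\cup\{ee^*:e\in E^1\})$. The plan is to realize the modification as a conjugation, i.e. to find $u\in U(R)$ with $\psi'=\ad(u)\psi$ satisfying $\psi'|_{DL(E)}=\phi|_{DL(E)}$; since conjugation by a unit induces the identity on $kk$ (via $\ad(u)\simh\id$ by \cite{dwkk}*{Lemma 2.3}, as invertible elements give homotopic-to-identity conjugations), this automatically gives $j(\psi')=j(\psi)$.

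First I would observe that $DL(E)$ has a canonical basis of mutually orthogonal idempotents, namely the set $\{v:v\in\sink(E)\}\cup\{ee^*:e\in E^1\}$ (these are orthogonal and sum to $1_{L(E)}$ when $E$ is finite). Their images under $\phi$ and under $\psi$ are two families of mutually orthogonal nonzero idempotents in $R$. The hypothesis $K_0(\phi)=K_0(\psi)$ forces $[\phi(g)]=[\psi(g)]$ in $K_0(R)$ for each such generator $g$, because the classes $[v]$ and $[ee^*]$ generate $K_0(L(E))$ (by \cite{libro}*{Theorem 3.2.5}) and $\phi,\psi$ are evaluated on them directly. By Corollary \ref{coro:equiv} (equivalence of (1) and (2)), $\phi(g)\sim\psi(g)$ for each generator $g$.

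The heart of the argument is to upgrade these pointwise Murray--von Neumann equivalences to a single simultaneous conjugation carrying the whole family $\{\psi(g)\}$ to $\{\phi(g)\}$. Since both families are finite, orthogonal, and sum to $1_R$ (as $\phi,\psi$ are unital, being nonzero homomorphisms with $E$ finite in the setting of part iii) of Theorem \ref{thm:k0liftr}), I would build the conjugating unit inductively. Having matched the first $k$ idempotents by a partial isometry-type element, one uses $\phi(g_{k+1})\sim\psi(g_{k+1})$ together with the complementary equivalence $1-\sum_{i\le k}\phi(g_i)\sim 1-\sum_{i\le k}\psi(g_i)$ (which holds in $K_0$ by subtraction and hence gives an MvN equivalence by Corollary \ref{coro:equiv}); patching the matching element on the matched block with an MvN equivalence restricted to the orthogonal complement yields a genuine unit $u\in U(R)$ with $u\psi(g)u^{-1}=\phi(g)$ for all generators $g$. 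Concretely, this is the standard fact that in a unital ring two orthogonal decompositions of $1$ into MvN-equivalent pieces are conjugate by a unit; here the purely infinite simple hypothesis guarantees that the complementary classes also agree, which is exactly what makes the inductive step close.

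The main obstacle is precisely this last patching step: matching idempotents one at a time gives partial equivalences $x_i,y_i$ on the blocks, and I must assemble them into an honest invertible element of $R$ rather than merely a map between projective modules. The key point making this possible is that at each stage the leftover complements are also MvN-equivalent (forced by the $K_0$ agreement and Corollary \ref{coro:equiv}), so the element $u=\sum_i x_i$ built from the block equivalences is two-sided invertible with inverse $\sum_i y_i$. Once $u$ is in hand, setting $\psi'=\ad(u)\psi$ gives $\psi'|_{DL(E)}=\phi|_{DL(E)}$ by construction, and $j(\psi')=j(\psi)$ because inner automorphisms are homotopic to the identity, completing the proof.
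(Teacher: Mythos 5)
Your core construction coincides with the paper's: the generators $\{v:v\in\sink(E)\}\cup\{ee^*:e\in E^1\}$ of $DL(E)$ are mutually orthogonal idempotents, $K_0(\phi)=K_0(\psi)$ gives $[\phi(g)]=[\psi(g)]$ for each generator $g$, Corollary \ref{coro:equiv} upgrades this to MvN equivalences $(x_g,y_g):\phi(g)\sim\psi(g)$, and orthogonality kills all cross terms in $(\sum_g x_g)(\sum_h y_h)$, so the block equivalences assemble directly into a single conjugating element. Note that this last point also makes your inductive ``patching'' with complementary equivalences unnecessary: no induction is needed once the implementing elements are chosen in the corners $\phi(g)R\psi(g)$ and $\psi(g)R\phi(g)$.

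There is, however, a genuine gap: you assert that $\phi$ and $\psi$ are unital ``being nonzero homomorphisms,'' and this is false. A nonzero homomorphism out of a simple algebra is injective but need not be unital (the proposition does not assume unitality, and the paper's own Theorem \ref{thm:k0liftr} produces non-unital examples). Hence your two families of idempotents sum to $\phi(1)$ and $\psi(1)$, which may be distinct proper idempotents, so $u=\sum_g x_g$ satisfies only $uu'=\phi(1)$ and $u'u=\psi(1)$ and is not a unit of $R$. Worse, if exactly one of $\phi(1),\psi(1)$ equals $1$, then no conjugation $\ad(u)$ with $u\in U(R)$ can make $\psi$ agree with $\phi$ on $DL(E)$, since $\ad(u)$ fixes $1$; your strategy of realizing $\psi'$ as a unitary conjugate of $\psi$ fails outright in that case. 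The paper circumvents this by a preliminary case analysis --- reducing to $\phi(1)=\psi(1)=p$ via part (3) of Corollary \ref{coro:equiv} when neither is $1$, and invoking Theorem \ref{thm:k0liftr} when one of them is --- and then uses the generalized conjugation $\ad(x,y)$ of \eqref{map:adxy} with $x,y\in pRp$, $xy=yx=p$, which still gives $j(\ad(x,y)\psi)=j(\psi)$ by \cite{dwkk}*{Lemma 2.3} without $x$ being invertible in $R$. To repair your argument you must either add this case analysis or replace $\ad(u)$ by $\ad(x,y)$ throughout.
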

\begin{proof}
First assume that $ \phi(1) = \psi(1)=p$. For each $e\in E^1$ and each $v\in\sink(E)$ choose MvN equivalences $(x_e,y_e):\phi(ee^\ast)\sim \psi(e e^\ast)$ and $(x_v,y_v):\phi(v)\sim \psi(v)$.  Define $x = \sum_{e\in E^1} x_e+\sum_{v\in\sink(E)}x_v$ and $y = \sum_{e\in E^1} y_e+\sum_{v\in\sink(E)}y_v$. Then $x,y\in p R p$ and $x y = p= y x$. Hence $\psi':L(E)\to R$, $\psi'(a) = x \psi(a) y$ satisfies $ \psi'_{|DL(E)} = \phi_{|DL(E)}$. Moreover $j(\psi)=j(\psi')$ by \cite{dwkk}*{Lemma 2.3}.
Next assume that $\phi(1) \neq \psi(1)$ and that none of them is equal to $1$. Then by Corollary \ref{coro:equiv}, there is an element $u \in U(R)$ such that $u \phi (1) u^{-1} = \psi(1)$. Hence we can replace $\psi$ by $a\mapsto u \psi(a) u^{-1}$ and we are in the above case.
Finally, if $\phi(1) \neq \psi(1)$ and one of them, say $\psi(1)$, is $1$,  we can replace $\phi $ by a unital homomorphism by Theorem \ref{thm:k0liftr} and we are again in the first case.
\end{proof}

\section{Lifting \topdf{$K$}{K}-theory maps to algebra maps: \topdf{$K_0$}{K0} and \topdf{$K_1$}{K1}}\label{sec:k0k1lift}

Let $E$ be a finite graph; below we will give a right inverse of the surjective map
\begin{equation}\label{map:elonto}
\partial: K_1(L(E))\onto \ker(I-A_E^t).
\end{equation}
Observe that the analogue of the map \eqref{map:elonto} in the $C^*$-algebra setting is an isomorphism; an explicit formula for its inverse was given by R\o rdam
in \cite{ror}*{page 33} in the case when $E$ is regular. We shall show that in the purely algebraic case considered here, the same formula gives a right inverse of \eqref{map:elonto}, even for singular $E$.

Let $I-B^t_E$ be as in \cite{dwkk}*{Remark 5.7}. Let 
\[
s^*:\Z^{E^0}\to \Z^{(E^1)\coprod\sink(E)}, s^*(\chi_{v})=\left\{\begin{matrix}\sum_{s(e)=v}\chi_e& v\in\reg(E)\\ \chi_v& v\in\sink(E)\end{matrix}\right.
\] 
By \cite{abc}*{formula 4.1}, we have a commutative diagram
\[
\xymatrix{\Z^{E^1}\ar[r]^{I-B^t_E}&\Z^{E^1\coprod\sink(E)}\\
          \Z^{\reg(E)}\ar[u]^{s^*}\ar[r]_{I-A_E^t}&\Z^{E^0}\ar[u]^{s^*}
}
\]
In particular, $s^*$ maps $\ker(I-A^t_E)\to \ker(I-B^t_E)$. Furthermore it is an isomorphism by the dual of \cite{abc}*{Lemma 4.3}. Let $ x = (x_v) \in \ker(I-A_E^t) \subseteq \Z^{\reg(E)}$. Set $y=s^*(x)\in \ker(I-B_E^t)$. Let 
\begin{equation}\label{elS}
S=\{(e,j): y_e\ne 0, 1\le j\le |y_e|\}
\end{equation}
Consider the diagonal matrix $V=V(x)\in M_S(L(E))$, 
\[
V_{(e,j),(e,j)}=\left\{\begin{matrix} e& \text{ if } y_e>0\\ e^* & \text{ if } y_e<0\end{matrix}\right.
\]
Let $p=1-VV^*$, $q=1-V^*V$. Observe that $p,q\in M_S(DL(E))$. Moreover, for $\Alpha=E^1\coprod\sink(E)$, $DL(E)\cong \ell^{\Lambda}$ and we may regard 
$p=(p_{\alpha})$ and $q=(q_{\alpha})$ as $\Alpha$-tuples of diagonal matrices in $M_S$ whose entries are in $\{0,1\}$. One checks, using that $y\in\ker(I-B_E^t)$, that for each $\alpha\in\Alpha$, $p_{\alpha}$ and $q_{\alpha}$ have the same number of nonzero coefficients. Hence we may choose for each $\alpha$ a matrix 
$W_{\alpha}\in p_{\alpha}M_Sq_{\alpha}$ with coefficients in $\{0,1\}$ such that $W_{\alpha} W_{\alpha}^t=p_{\alpha}$ and $W_{\alpha}^tW_{\alpha}=q_{\alpha}$. Further, we may even require that
\begin{equation}\label{condiW}
(W_{\alpha})_{(e,i),(f,j)}=1\Rightarrow (p_{\alpha})_{(e,i),(e,i)}=(q_{\alpha})_{(f,j),(f,j)}=1.
\end{equation}
We shall use \eqref{condiW} in the proof of Lemma \ref{lem:aux} below. 
 Let $W=W(x)\in M_S(DL(E))$ be the matrix corresponding to $(W_\alpha)$; then 
\begin{equation}\label{vw}
WW^*=1-VV^*,\quad W^*W=1-V^*V \text{ and }W^*V=V^*W=0.
\end{equation} 
Put 
\begin{equation}\label{elU}
U(x)=V(x)+W(x).
\end{equation} 
It follows from \eqref{vw} that $U(x)U(x)^*=U(x)^*U(x)=1$.

\begin{prop}\label{prop:section}
Let $x\in \ker(I-A_E^t)$, $[U(x)]\in K_1(L(E))$ the class of the element \eqref{elU} and $\partial : K_1 (L(E))  \to \ker(I-A_E^t)$ as in \eqref{map:elonto}.
Then $\partial(U(x))=-x$.
\end{prop}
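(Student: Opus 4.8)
The plan is to compute $\partial[U(x)]$ directly from the definition of the boundary map $\partial$ in the exact sequence relating $K_1$ and $K_0$, using the explicit structure of $U(x)=V(x)+W(x)$. Since $U(x)$ is a unitary in $M_S(L(E))$ built out of edge variables $e,e^*$ and the ``diagonal'' partial isometries $W_\alpha$, the first thing I would do is recall how $\partial$ is defined for Leavitt path algebras: the map \eqref{map:elonto} arises from the canonical presentation of $K_*(L(E))$ in terms of the matrix $I-A_E^t$, and $\partial$ sends the class of a unitary to the obstruction to lifting it along the relevant idempotent/projection data. Concretely, I expect $\partial$ to be computable by looking at how $U(x)$ conjugates or relates the canonical projections attached to vertices, i.e. by tracking the ``defect'' $1-UU^*$-type data at the level of $K_0$ of the diagonal subalgebra $DL(E)\cong \ell^{\Lambda}$.

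The key computational input is the relation \eqref{vw}, namely $WW^*=1-VV^*$, $W^*W=1-V^*V$, together with $W^*V=V^*W=0$, which guarantee $U(x)U(x)^*=U(x)^*U(x)=1$. The essential point is that $V=V(x)$ is the diagonal matrix with entries $e$ (when $y_e>0$) or $e^*$ (when $y_e<0$), so $VV^*$ and $V^*V$ differ precisely by the source/range relations of the graph — and this difference is exactly what $s^*$ and $I-B_E^t$ encode. So the strategy is: express $VV^*$ and $V^*V$ as diagonal projections in $M_S(DL(E))$, read off their $K_0$-classes in $\cV(DL(E))=\Z^{\Lambda}$, and show that the net contribution computed by $\partial$ equals $-x$ after passing back through the isomorphism $s^*:\ker(I-A_E^t)\iso\ker(I-B_E^t)$. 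Since $W$ lives entirely in $DL(E)$ and merely ``completes'' $V$ to a unitary by matching up the leftover diagonal projections $p_\alpha$ and $q_\alpha$ (which by construction have equal numbers of nonzero entries for each $\alpha$), the $W$-part should contribute nothing new to $\partial$; only the $e$/$e^*$ part of $V$ carries the graph relations that produce $x$.

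The sign $-x$ (rather than $+x$) should emerge naturally from the orientation conventions: whether an edge $e$ appears as $e$ or $e^*$ in $V$ is dictated by the sign of $y_e$, and the boundary map measures $s(e)$ versus $r(e)$ contributions with a definite sign. I would set up the computation so that the bookkeeping of $y_e>0$ versus $y_e<0$ cases is handled uniformly, reducing both to the statement that the index-type invariant of the unitary $U(x)$ recovers the class $-y$ in $\ker(I-B_E^t)$, and then invoke the commutative diagram with $s^*$ to conclude $\partial[U(x)]=-x$ in $\ker(I-A_E^t)$.

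The main obstacle I anticipate is making the definition of $\partial$ sufficiently explicit to carry out this trace. The surjection \eqref{map:elonto} comes from the general machinery for $K$-theory of Leavitt path algebras, and its boundary map is not given by a one-line formula; I would likely need to either unwind the long exact sequence that produces it or, more efficiently, reduce to a known model (for instance the Cohn algebra / Toeplitz-type extension used elsewhere in the paper, or R\o rdam's formula in \cite{ror}) and check compatibility. The condition \eqref{condiW} on $W_\alpha$ is flagged as needed for Lemma \ref{lem:aux}, which suggests the cleanest route is to factor this proposition through that lemma: establish the relevant naturality/compatibility at the level of the diagonal data there, and then the present proposition becomes a direct application. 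So the real work is organizing the boundary-map computation, not the algebra of \eqref{vw}, which is routine once the framework is fixed.
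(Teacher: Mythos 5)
Your overall strategy --- make the boundary map explicit via a Toeplitz-type lift and compute an index-type defect of $U(x)$ --- is the framework the paper actually uses, so you have identified the right vehicle. But the one concrete computation you propose would not produce the answer: you suggest reading off the $K_0$-classes of $VV^*$ and $V^*V$ in $\cV(DL(E))\cong\Z^{\Lambda}$ and transporting them through $s^*$. In $M_S(L(E))$ the element $U(x)$ is a genuine unitary, so $1-UU^*=1-U^*U=0$; there is no defect to read off inside $L(E)$, diagonal subalgebra or not. The defect only appears after lifting to the Cohn algebra $C(E)$: the paper forms $\hat U=\hat V+\hat W\in M_S(C(E))$, checks that $\hat U$ is a partial isometry ($\hat U=\hat U\hat U^*\hat U$), and applies the explicit index-map formula of \cite{www}*{Section 2.4}, which for a partial isometry lift reduces to
\[
\partial([U])=[1-\hat U^*\hat U]-[1-\hat U\hat U^*].
\]
These defect projections live in the ideal $\ker(C(E)\to L(E))$, whose $K_0$ is $\Z^{\reg(E)}$ on the gap projections $q_v$ --- not in $\Z^{\Lambda}$, and with no appearance of $s^*$ or $\ker(I-B_E^t)$ at this stage --- and the identity $\partial([U])=-\sum_v x_vq_v$ is then a direct computation using $x\in\ker(I-A_E^t)$. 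So the missing step in your plan is precisely the lift-and-index computation, which you correctly flag as ``the real work'' but do not supply, and the substitute computation you do sketch takes place in the wrong algebra.

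One smaller point: the proposition cannot be routed through Lemma \ref{lem:aux}. That lemma is a later, logically independent statement (comparing $K_1(\psi)\gamma$ with $K_1(\phi)\gamma$ for two homomorphisms agreeing on $DL(E)$), its proof nowhere computes $\partial$, and the condition \eqref{condiW} is needed there for an unrelated matrix identity. The present proposition is proved first and on its own.
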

\begin{proof}
We keep the notation of the paragraph preceding the proposition. Let $C(E)$ be the Cohn path algebra; consider the subalgebra
\[
DC(E)=\mspan(\{q_v:v\in\reg(E)\}\cup\sink(E)\cup\{ee^*:e\in E^1\}\subset C(E).
\]
Consider the diagonal matrix $\hat{V}$ defined by the same prescription as $V$ but regarded now as an element of $M_S(C(E))$. Let $\hat{W}\in M_S(DC(E))$ be the image of $W$ under the map induced by the obvious inclusion $DL(E)\subset DC(E)$; put $\hat{U}=\hat{V}+\hat{W}$. 
Consider the matrix
$$ h = \begin{bmatrix}
   2 \hat{U} - \hat{U}\hat{U^\ast}\hat{U} & \hat{U}\hat{U^\ast}-1   \\           
    1-\hat{U^\ast}\hat{U} & \hat{U^\ast}
  \end{bmatrix}\in M_{S\coprod S}(C(E)). $$
  
By \cite{www}*{Section 2.4} (see also \cite{robook}*{Definition 9.1.3}), $h$ is invertible and  
$$\partial ( [U]) = [ h 1_{S} h^{-1}]-[1_{S}]. $$
Here $1_{S}$ is the $S\times S$ identity matrix, located in the upper left corner. 

One checks that $\hat{U}=\hat{U}\hat{U^\ast}\hat{U}$, and that 
\begin{equation}\label{deltU}
\partial ([U]) = [1- \hat{U^\ast}(x_i)\hat{U}]-[1 - \hat{U}\hat{U^\ast}]\in K_0(\bigoplus_{v\in \reg(E)}\ell q_v)\cong\Z^{\reg(E)}.
\end{equation} 
One checks, using \eqref{deltU} and the fact that $x\in \ker (I-A_E^t)$, that $$\partial([U])=-[\sum_{v \in E^0} x_v q_v].$$
This finishes the proof.
\end{proof}
 In principle, the assignment $\ker(I-A_E^t)\to K_1(L(E))$, $[x]\mapsto [U(x)]$ is just a set theoretic map. A group homomorphism with similar properties is obtained as follows. Choose a basis $\fB=\{x_i\}$ of the free abelian group $\ker(I-A_E^t)$; let
\begin{equation}\label{map:gamma}
\gamma=\gamma_{\fB}:\ker(I-A_E^t)\to K_1(L(E)), \quad \gamma(\sum_i n_ix_i)=\sum_in_i[U(x_i)].
\end{equation}

Let $E$ be a finite graph such that $L(E)$ is purely infinite simple. Then $\sink(E)=\emptyset$, by \cite{libro}*{Lemma 3.1.10 and Theorem 3.1.10}. Let $\phi : L(E) \to R$ be a unital algebra homomorphism with $R$ purely infinite simple. Set 
\begin{equation}\label{rphi}
R_\phi = \{ x \in R \ : \phi (e e^\ast)x=x\phi(ee^\ast),\quad \  \text{ for all } e \in E^1\}.
\end{equation}
Note that 
\[
R_\phi = \oplus_{e \in E^1} \phi (e e^\ast) R \phi (e e^\ast).
\] 
Because $L(E)$ is simple, $\phi(\alpha)\ne 0$ $(\alpha\in E^1)$, whence each of the inclusions $\phi(\alpha\alpha^*)R\phi(\alpha\alpha^*)\subset R$ induces an isomorphism in $K_1$. Hence the direct sum $R_\phi\subset R^{E^1}$ of those inclusions induces an isomorphism 
\begin{equation}\label{map:adjoint}
K_1(R_\phi) = \bigoplus_{e \in E^1} K_1( \phi (e e^\ast) R \phi (e e^\ast))\iso K_1(R)^{E^1}.
\end{equation}
Let $\iota:K_1(R_\phi)\to K_1(R)$ be the map induced by the inclusion $R_\phi\subset R$. Consider the bilinear map
\begin{equation}\label{pairing}
\langle \cdot , \cdot \rangle: \Z^{E^1}\times K_1(R_\phi)\to K_1(R), \quad \langle x,y\rangle=\sum_i 
x_i\iota(y_i).
\end{equation}
Observe that $\langle \cdot , \cdot \rangle$ is a perfect pairing; indeed the adjoint homomorphism $K_1(R_\phi)\to K_1(R)^{E^1}$ is the isomorphism 
\eqref{map:adjoint}. 

\begin{lema}\label{lem:aux} (cf.\cite{ror}*{Lemma 3.5}.)
Let $E$ be a finite graph such that $L(E)$ is purely infinite simple, $R$ a purely infinite simple unital algebra, and $\phi$ and $\psi : L(E) \to R$ unital homomorphisms. Assume that $\phi$ and $\psi$ agree on $DL(E)$. Let 
\[
u=\sum_{\alpha \in E^1} \psi(\alpha) \phi(\alpha^\ast)\in R_\phi= R_\psi.
\]
Then
\begin{equation}\label{efsdfas}
 K_1(\psi) (\gamma(x ))= \langle x , [u] \rangle + K_1(\phi)   (\gamma(x )) \text{ for all } x \in \ker(I-A_E^t).
\end{equation}
\end{lema}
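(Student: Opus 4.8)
The plan is to reduce to a basis element and then compute, entrywise, the $K_1$-class of a single matrix. Both sides of \eqref{efsdfas} are additive in $x$: the two $\gamma$-terms are homomorphic in $x$ because $\gamma$ is a homomorphism \eqref{map:gamma} and so are $K_1(\phi),K_1(\psi)$, while $x\mapsto\langle x,[u]\rangle$ is additive because $\langle\cdot,\cdot\rangle$ is bilinear \eqref{pairing} and $s^*$ is linear. Here, since $\sink(E)=\emptyset$, I tacitly identify $x\in\ker(I-A_E^t)$ with $y=s^*(x)\in\ker(I-B_E^t)\subseteq\Z^{E^1}$, so that $\langle x,[u]\rangle=\sum_{e}y_e\,\iota([u]_e)$ with $y_e=x_{s(e)}$ and $[u]_e$ the $e$-component of $[u]$ under \eqref{map:adjoint}. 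Hence it suffices to prove \eqref{efsdfas} for $x\in\fB$, where by \eqref{map:gamma} $\gamma(x)=[U(x)]$ with $U=U(x)=V+W$ the unitary \eqref{elU} in $M_S(L(E))$. Writing also $\phi,\psi$ for the induced homomorphisms $M_S(L(E))\to M_S(R)$ and using $U^{-1}=U^*$, I must show
\[
[\psi(U)\phi(U^*)]=[\psi(U)]-[\phi(U)]=\langle x,[u]\rangle\quad\text{in }K_1(R).
\]

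The key point is that $\psi(U)\phi(U^*)$ is diagonal. Since $V$ is diagonal with entries in $\{\alpha,\alpha^*\}$ and $W,W^*\in M_S(DL(E))$, and since $\phi$ and $\psi$ agree on $DL(E)$, we have $\phi(W)=\psi(W)$ and $\phi(W^*)=\psi(W^*)$. Expanding $\psi(U)\phi(U^*)=(\psi(V)+\psi(W))(\phi(V^*)+\phi(W^*))$, the two cross terms vanish: $\psi(V)\phi(W^*)=\psi(VW^*)=0$ and $\psi(W)\phi(V^*)=\phi(WV^*)=0$, because the relations \eqref{vw} together with the partial isometry identities $VV^*V=V$ and $W^*WW^*=W^*$ (which hold by the construction of $V$ and $W$, cf. the support condition \eqref{condiW}) give $VW^*=WV^*=0$ in $M_S(L(E))$. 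Using moreover $\psi(W)\phi(W^*)=\phi(WW^*)=1-\phi(VV^*)=1-\psi(VV^*)$ (again by \eqref{vw}, as $VV^*\in M_S(DL(E))$), I obtain the diagonal matrix
\[
\psi(U)\phi(U^*)=\psi(V)\phi(V^*)+\bigl(1-\psi(VV^*)\bigr).
\]

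Finally I would read off the $K_1$-class entrywise. At an index $(e,j)\in S$ the diagonal entry is a unitary of $R$: if $y_e>0$ it is $u_e+1-\psi(ee^*)$ with $u_e=\psi(e)\phi(e^*)=\phi(ee^*)u\phi(ee^*)$, whose class is $\iota([u]_e)$; if $y_e<0$ it is $\widetilde u_e+1-\psi(r(e))$ with $\widetilde u_e=\psi(e^*)\phi(e)$. In the second case $\widetilde u_e=\psi(e^*)u_e^{-1}\psi(e)$ is the image of $u_e^{-1}$ under the corner isomorphism induced by the MvN equivalence $(\psi(e),\psi(e^*)):\psi(ee^*)\sim\psi(r(e))$, so by \cite{dwkk}*{Lemma 2.3} its class in $K_1(R)$ equals $\iota([u_e^{-1}])=-\iota([u]_e)$. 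Since the class of a diagonal matrix is the sum of the classes of its entries and $(e,j)$ ranges over $|y_e|$ copies for each $e$, I get
\[
[\psi(U)\phi(U^*)]=\sum_{e:\,y_e>0}y_e\,\iota([u]_e)+\sum_{e:\,y_e<0}|y_e|\bigl(-\iota([u]_e)\bigr)=\sum_{e\in E^1}y_e\,\iota([u]_e)=\langle x,[u]\rangle,
\]
as wanted. I expect the main obstacle to be the bookkeeping of this last step: one must keep track of the two genuinely different corners occurring according to the sign of $y_e$ (the edge corner $\phi(ee^*)R\phi(ee^*)$ when $y_e>0$ and the vertex corner $\phi(r(e))R\phi(r(e))$ when $y_e<0$) and verify, through the corner isomorphism, that the $y_e<0$ contributions carry the correct sign $-\iota([u]_e)$; establishing the vanishing of the cross terms from \eqref{vw} and \eqref{condiW} is the other point requiring care.
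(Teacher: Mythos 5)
Your proof is correct and follows the paper's strategy in its essentials: reduce to a basis element of $\ker(I-A_E^t)$, work with the explicit unitary $U(x)=V+W$, and exploit the hypothesis that $\phi$ and $\psi$ agree on $DL(E)$. The organization of the computation differs slightly from the paper's, in a way worth noting. The paper establishes the exact matrix identity $\psi(U)=P\phi(U)Q$ for explicit diagonal matrices $P,Q$ whose entries all live in the edge corners $\phi(ee^*)R\phi(ee^*)$, and then reads off $[P]+[Q]=\langle x,[u]\rangle$; the price is the ``tedious but straightforward'' verification that $\phi(W)=P\phi(W)Q$, which is where condition \eqref{condiW} enters. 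You instead compute $\psi(U)\phi(U)^{-1}$ directly and show it is diagonal, which replaces that verification by the cleaner orthogonality relations $VW^*=WV^*=0$ (these follow from $W=(1-VV^*)W(1-V^*V)$ and $VV^*V=V$, without needing the refinement \eqref{condiW}); the price you pay is that your diagonal entries at indices with $y_e<0$ land in the vertex corners $\psi(r(e))R\psi(r(e))$ rather than the edge corners, so you need the extra corner-transfer step via $\ad(\psi(e^*),\psi(e))$ to identify their $K_1$-classes with $-\iota([u]_e)$. Both of your extra verifications check out (in particular $\psi(e^*)u_e^{-1}\psi(e)=\psi(e^*)\phi(e)$ since $\phi(e)\psi(r(e))=\phi(e)$), so the two computations are equivalent and equally rigorous; yours trades the \eqref{condiW}-bookkeeping for a standard $K_1$-invariance under MvN-equivalent corners.
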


\begin{proof}
Observe that $\psi(e)\phi(e^*)\in U(\phi(e)R\phi(e^*))$ $(e\in E^1)$, whence $u\in U(R_\phi)$. 
Let $\{\chi_e:e\in I\}$ be the canonical basis of $\in\Z^I$. One checks that 
\begin{equation}\label{paircano}
 \langle \chi_e , [u] \rangle = \psi(e) \phi(e^\ast) +1 -\phi(ee^\ast).
\end{equation}
To prove the lemma, we may assume that $x$ is an element of the basis $\fB$ of $\ker(I-A_E^t)$ used in \eqref{map:gamma} to define $\gamma$ . Then taking 
\eqref{paircano} into account and adopting the notations and conventions used in the definition of $U(x)$, one computes that the right hand side of equation \eqref{efsdfas} is
\begin{equation}\label{rhs:aux}
\sum_{y_e > 0} y_e [\psi(e) \phi(e^\ast) +1 -\phi(e e^\ast)] + [\phi ( U(x))] - \sum_{y_e < 0} y_e [\phi(e) \psi(e^\ast) +1 -\phi(e e^\ast)] .
\end{equation}
Let $S$ be as in \eqref{elS}. Consider the diagonal matrices $P,Q\in M_SL(E)$ with diagonal entries as follows
\begin{gather*}
P_{(e,j),(e,j)}=\left\{\begin{matrix}\psi(e)\phi(e^*)+1-\phi(ee^*)& \text{ if }y_e>0\\ 1 & \text{ if } y_e<0\end{matrix}\right.\\
Q_{(e,j),(e,j)}=\left\{\begin{matrix}1 & \text{ if }y_e>0\\ \phi(e)\psi(e^*)+1-\phi(ee^*)& \text{ if } y_e<0\end{matrix}\right.
\end{gather*}
Observe that \eqref{rhs:aux} is $[P\phi(U(x))Q]$. Hence it suffices to show that $K_1(\psi)(U(x))=[P\phi(U(x))Q]$; we shall show that in fact $\psi(U(x))=P\phi(U(x))Q$. Recall that $U(x)=V(x)+W(x)$. It is immediate from the definition of $V(x)$ that $\psi(V(x))=P\phi(V(x))Q$. Hence since $W$ has coefficients in $DL(E)$, it only remains to show that $\phi(W(x))=P\phi(W(x))Q$. A tedious but straightforward calculation, using \eqref{condiW} shows that 
\[
\phi(W(x)_{\alpha})_{(e,i),(f,j)}=(P\phi(W(x)_\alpha)Q)_{(e,i),(f,j)}\qquad\forall (e,i),(f,j)\in S,\quad \alpha\in\Lambda.
\]
This completes the proof.
\end{proof}

\begin{rem}\label{rem:xi0detxi1} 
Recall that if $L(E)$ is unital, we have an exact sequence
\[
0\to K_0(L(E))\otimes K_1(\ell)\to K_1(L(E))\to \ker(I-A_E^t)\to 0.
\]
It follows from \cite{dwkk}*{Lemma 7.2} that if $R\in\aha$ is $K_1$-regular and $\xi\in kk(L(E),R)$, then $K_1(\xi)$ restricts to the composite of $K_0(\xi)\otimes \id$ with the cup product $K_0(R)\otimes K_1(\ell)\to K_1(R)$.
\end{rem} 

\begin{teo}\label{thm:ror}
Let $E$ be a finite graph and $S$ an algebra. Assume that $L(E)$ is simple and that $S$ is unital, purely infinite simple and $K_1$-regular. Let $\xi_0 : K_0 (L(E)) \to K_0(S)$ and $\xi_1 : \ker(I-A_E^t) \to K_1(S)$ be group homomorphisms. 
Then there exists a nonzero algebra homomorphism $\phi : L(E) \to  S$ such that $K_0(\phi) = \xi_0$ and such that $K_1(\phi)\gamma= \xi_1$. If moreover $\xi_0$ is unital then we can choose $\phi$ to be a unital homomorphism $L(E)\to S$.
\end{teo}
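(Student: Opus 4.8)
The plan is to pin down $K_0$ first and then correct $K_1$ without disturbing $K_0$. If $L(E)$ is simple but not purely infinite then, by Remark \ref{rem:simpnopis}, $L(E)\cong M_n$; in this case $E$ has no cycles, so $\ker(I-A_E^t)=0$ and $\xi_1=0=K_1(\phi)\gamma$ automatically, and the statement reduces to Theorem \ref{thm:k0liftr}: we invoke part iii) when $\xi_0$ is unital and part ii) otherwise. So assume $L(E)$ is purely infinite simple; then $\sink(E)=\emptyset$, and $DL(E)=\mspan\{ee^*:e\in E^1\}$ contains all the vertex idempotents. Using Theorem \ref{thm:k0liftr} I choose a homomorphism $\phi_0:L(E)\to S$ with $K_0(\phi_0)=\xi_0$, unital if $\xi_0$ is unital. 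When $\xi_0$ is not unital I replace $S$ by the full corner $\phi_0(1)S\phi_0(1)$ (again purely infinite simple unital by \cite{agp}*{Corollary 1.7}, with the same $K_0$ and $K_1$ since $\phi_0(1)$ is full), so that in all cases I may regard $\phi_0$ as a unital homomorphism into a purely infinite simple unital algebra.

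With $\phi_0$ fixed, the goal is to produce $\psi:L(E)\to S$ agreeing with $\phi_0$ on $DL(E)$ and satisfying $K_1(\psi)\gamma=\xi_1$; agreement on $DL(E)$ automatically gives $K_0(\psi)=K_0(\phi_0)=\xi_0$, because the classes of the vertices generate $K_0(L(E))$ and the vertices lie in $DL(E)$. I introduce the defect $\delta:=\xi_1-K_1(\phi_0)\gamma:\ker(I-A_E^t)\to K_1(S)$. By Lemma \ref{lem:aux}, any $\psi$ agreeing with $\phi_0$ on $DL(E)$ modifies $K_1(\phi_0)\gamma$ exactly by the pairing \eqref{pairing} against the class of $u=\sum_{\alpha\in E^1}\psi(\alpha)\phi_0(\alpha^*)\in U(R_{\phi_0})$, where the lemma's bracket $\langle x,[u]\rangle$ is the value of \eqref{pairing} at $(s^*x,[u])$, i.e. $\langle x,[u]\rangle=\sum_e(s^*x)_e\langle\chi_e,[u]\rangle$. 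Thus it suffices to find $u\in U(R_{\phi_0})$ with $\langle s^*x,[u]\rangle=\delta(x)$ for all $x\in\ker(I-A_E^t)$, and then to realize a $\psi$ producing this $u$.

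To find the required class I use that the pairing \eqref{pairing} is perfect, its adjoint $K_1(R_{\phi_0})\iso K_1(S)^{E^1}=\Hom(\Z^{E^1},K_1(S))$ being the isomorphism \eqref{map:adjoint}. I transport $\delta$ through the isomorphism $s^*:\ker(I-A_E^t)\iso\ker(I-B_E^t)\subset\Z^{E^1}$ to get a homomorphism $\delta\circ(s^*)^{-1}:\ker(I-B_E^t)\to K_1(S)$. Since $\sink(E)=\emptyset$, the quotient $\Z^{E^1}/\ker(I-B_E^t)\cong\operatorname{im}(I-B_E^t)$ is free, so $\ker(I-B_E^t)$ is a direct summand of $\Z^{E^1}$ and $\delta\circ(s^*)^{-1}$ extends to some $\tilde\delta\in\Hom(\Z^{E^1},K_1(S))$. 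By perfectness there is a unique class in $K_1(R_{\phi_0})$ pairing with each $\chi_e$ to $\tilde\delta(\chi_e)$; as $R_{\phi_0}=\bigoplus_e\phi_0(ee^*)S\phi_0(ee^*)$ is a finite direct sum of purely infinite simple unital algebras, Theorem \ref{thm:kpis} lets me realize this class by an honest unit $u=(u_e)_e\in U(R_{\phi_0})$, and then $\langle s^*x,[u]\rangle=\tilde\delta(s^*x)=\delta(x)$ for every $x$.

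Finally I build $\psi$ from $u$ by setting $\psi(e)=u_e\phi_0(e)$ and $\psi(e^*)=\phi_0(e^*)u_e^{-1}$ for $e\in E^1$ (inverse in the corner $\phi_0(ee^*)S\phi_0(ee^*)$) and $\psi=\phi_0$ on vertices; a direct check of the Cuntz--Krieger relations shows this is a well-defined algebra homomorphism agreeing with $\phi_0$ on $DL(E)$ and satisfying $\sum_\alpha\psi(\alpha)\phi_0(\alpha^*)=u$. Lemma \ref{lem:aux} then yields $K_1(\psi)\gamma(x)=\langle s^*x,[u]\rangle+K_1(\phi_0)\gamma(x)=\delta(x)+K_1(\phi_0)\gamma(x)=\xi_1(x)$, while $K_0(\psi)=\xi_0$ as noted; if $\xi_0$ is unital the corner is all of $S$ and $\psi$ is unital. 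The main obstacle is precisely this $K_1$-correction: one must keep $K_0$ frozen, which forces every modification to occur through the corner units $u_e$, while still hitting an arbitrary prescribed $\xi_1$. What makes it work is the interplay of the perfectness of \eqref{pairing} with the fact that $\ker(I-B_E^t)$ is a direct summand of the free group $\Z^{E^1}$ — exactly what allows the defect $\delta$, a priori defined only on $\ker(I-A_E^t)$, to be extended to $\Z^{E^1}$ and then realized by a single unit $u$.
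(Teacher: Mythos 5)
Your proposal is correct and follows essentially the same route as the paper: first realize $\xi_0$ via Theorem \ref{thm:k0liftr} (handling the non--purely-infinite case through Remark \ref{rem:simpnopis}), pass to the corner $\phi_0(1)S\phi_0(1)$, then use the perfectness of the pairing \eqref{pairing} together with the fact that the relevant kernel is a direct summand of a free group to produce a unit $u\in U(R_{\phi_0})$ realizing the $K_1$-defect, twist the edges by $u$, and conclude with Lemma \ref{lem:aux}. The only differences are presentational (you extend the defect after transporting it through $s^*$ to $\ker(I-B_E^t)\subset\Z^{E^1}$, while the paper splits $\ker(I-A_E^t)$ off $\Z^{\reg(E)}$ directly, and you write the unit componentwise as $(u_e)_e$ rather than as a single $g\in U(R_{\phi_0})$), so there is nothing substantive to add.
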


\begin{proof}
By Theorem \ref{thm:k0liftr}, there exists a nonzero algebra homomorphism \goodbreak
$ \phi_0 : L(E) \to  S$ such that $K_0(\phi_0) = \xi_0$, and if $\xi_0$ is unital then we may choose $\phi_0$ unital. If $L(E)$ is not purely infinite, then by Remark \ref{rem:simpnopis}, $L(E)\cong M_n$ for some $1\le n<\infty$. Hence
$\ker(I-A_E^t)=0$ and $K_1(L(E))=K_0(L(E))\otimes U(\ell)$. Assume that $L(E)$ is purely infinite simple. Let $R = \phi_0(1) S \phi_0(1)$ and let $\bar{\phi}_0 : L(E) \to R$ be the corestriction of $\phi_0$ and $\inc : R \to  S$ the inclusion. Since $\ker(I-A_E^t)$ is a direct summand of $\Z^{\reg(E)}$ and $\langle\cdot,\cdot\rangle$ is a perfect pairing, there exists $\theta\in K_1(R_{\bar{\phi}_0})$ such that
$$ \langle - , \theta \rangle = K_1(\inc)^{-1} \xi_1 - K_1(\bar{\phi}_0) \gamma. $$
Because $R_{\bar{\phi}_0}$ is a direct sum of purely infinite simple algebras, by Theorem \ref{thm:kpis} there exists $g \in U(R_{\bar{\phi}_0})$ such that $[g]= \theta$. Define $\bar{\phi} : L(E) \to R$ by setting $\bar{\phi}_{|E^0}=(\bar{\phi_0})_{|E^0}$,  $\bar{\phi}(e)=g\bar{\phi_0}(e)$, 
$\bar{\phi}(e^\ast)=\bar{\phi_0}(e^\ast) g^{-1}$. Observe that $\bar{\phi}$ and $\bar{\phi}_0$ agree on $DL(E)$; in particular, $\bar{\phi}$ is unital.  Hence by Lemma \ref{lem:aux}, we have
$$ K_1(\bar{\phi}) \gamma = K_1(\bar{\phi}_0 ) \gamma + \langle - , [u]\rangle. $$
But it follows from the formula defining $u$ in Lemma \ref{lem:aux} and the definition of $\bar{\phi}$ that $u=g$. Hence
$$ K_1(\bar{\phi}) \gamma = K_1(\inc)^{-1} \xi_1. $$
Set $\phi=\inc\bar{\phi}$; then $ K_1(\phi)\gamma= \xi_1$. Further, $K_0(\phi) = K_0(\phi_0) = \xi_0$ because $\phi$ and $\phi_0$ agree on $E^0$. 
It is clear by construction that if $\phi_0$ is unital homomorphism, then $\phi$ is also unital. 
\end{proof}

\section{Lifting \topdf{$kk$}{kk}-maps to algebra maps}\label{sec:kklift}

Let $\phi,\psi:A\to B$ be algebra homomorphisms. Put
\[
C_{\phi,\psi}=\{(a,f)\in A\oplus B[t]:f(0)=\phi(a), f(1)=\psi(a)\}.
\]
Let $\pi:C_{\phi,\psi}\to A$, $\pi(a,f)=a$; we have an algebra extension
\begin{equation}\label{seq:cylinder}
\Omega B\to C_{\phi,\psi}\overset{\pi}{\lra} A. 
\end{equation}
\begin{lem}\label{lem:triaresta}
Let $j:\aha\to kk$ be the canonical functor. The sequence \eqref{seq:cylinder} induces the following distinguished triangle in $kk$
\[
\xymatrix{j(\Omega B)\ar[r]& j(C_{\phi,\psi})\ar[r]^{j(\pi)}& j(A)\ar[r]^{j(\phi)-j(\psi)}&j(B).}
\] 
\end{lem}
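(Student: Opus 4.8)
The plan is to exhibit \eqref{seq:cylinder} as a pullback of a single ``double endpoint'' evaluation extension and then transport the connecting map by naturality, so that everything reduces to one explicit computation.

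First I would identify the kernel: a pair $(a,f)\in C_{\phi,\psi}$ with $a=0$ is exactly an $f\in B[t]$ with $f(0)=f(1)=0$, so $\ker\pi=\Omega B$ and \eqref{seq:cylinder} is an extension, $\ell$-linearly split by $a\mapsto (a,(1-t)\phi(a)+t\psi(a))$. Next I would consider the evaluation extension
\[
\Omega B\to B[t]\xrightarrow{(\ev_0,\ev_1)} B\oplus B,
\]
again linearly split, with kernel $\Omega B$. One checks directly that $C_{\phi,\psi}$ is the pullback of this extension along the homomorphism $(\phi,\psi):A\to B\oplus B$, compatibly with $\pi$ and with $\id_{\Omega B}$. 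By the excision theorem for $kk$ \cite{kkwt} both extensions yield distinguished triangles, and the pullback square is a morphism of extensions; so, writing $\partial$ for the connecting map of the evaluation extension and using $\Omega^{-1}\Omega B\cong B$ in $kk$, naturality gives $\partial_{\phi,\psi}=\partial\circ j((\phi,\psi))$.

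It then remains to compute $\partial:j(B\oplus B)\to j(B)$. Since $B[t]$ is polynomially homotopy equivalent to $B$ and both $\ev_0,\ev_1$ are homotopic to this equivalence, under the additive identification $j(B\oplus B)=j(B)\oplus j(B)$ the map $j((\ev_0,\ev_1))$ becomes the diagonal $\Delta=(\id,\id)$. In the distinguished triangle $j(\Omega B)\to j(B)\xrightarrow{\Delta} j(B)\oplus j(B)\xrightarrow{\partial} j(B)$ the diagonal is split mono, so $\partial$ is the cokernel map $(x,y)\mapsto x-y$ up to an automorphism of $j(B)$. To fix this automorphism and the sign I would pull the evaluation extension back along the two coproduct inclusions $\iota_0,\iota_1:B\to B\oplus B$: the pullbacks are the contractible path extensions $\Omega B\to \{g\in B[t]:g(1)=0\}\xrightarrow{\ev_0} B$ and $\Omega B\to\{g\in B[t]:g(0)=0\}\xrightarrow{\ev_1} B$, whose connecting maps are isomorphisms interchanged, up to the sign coming from loop reversal $t\mapsto 1-t$, so they are $\id$ and $-\id$ in the normalisation of the loop functor of \cite{kkwt}. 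Hence $\partial=(\id,-\id)$.

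Composing, and using $j((\phi,\psi))=(j(\phi),j(\psi))$ by additivity of $j$, I get $\partial_{\phi,\psi}=j(\phi)-j(\psi)$, as asserted. Everything but the last step is formal naturality of the excision triangle together with additivity of $j$; the main obstacle is precisely the sign bookkeeping, namely checking that the loop-functor orientation is the one assigning the $+$ sign to the $t=0$ endpoint $\phi$ and the $-$ sign to the $t=1$ endpoint $\psi$, so that the difference comes out as $j(\phi)-j(\psi)$ rather than $j(\psi)-j(\phi)$.
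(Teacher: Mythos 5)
Your proposal is correct and follows essentially the same route as the paper: both compare \eqref{seq:cylinder} with the evaluation extension $\Omega B\to B[t]\xrightarrow{(\ev_0,\ev_1)}B\oplus B$ via the map of extensions given by $(\phi,\psi)$, identify the triangle of the latter with $j(\Omega B)\xrightarrow{0}j(B)\xrightarrow{j(\Delta)}j(B\oplus B)\xrightarrow{[\id,-\id]}j(B)$, and conclude by naturality that the boundary map is $j(\phi)-j(\psi)$. Your extra paragraph pinning down the sign by pulling back along the two coproduct inclusions is a welcome elaboration of the step the paper dispatches with ``one checks.''
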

\begin{proof}
By definition of $C_{\phi,\psi}$, we have a map of extensions 
\begin{equation}\label{map:cylseq}
\xymatrix{\Omega B\ar@{=}[d]\ar[r]&C_{\phi,\psi}\ar[r]^\pi\ar[d]&A\ar[d]^{(\phi,\psi)}\\
\Omega B\ar[r]& B[t]\ar[r]^{(\ev_0,\ev_1)}& B\oplus B}
\end{equation}
Let $\Delta:B\to B\oplus B$, $\Delta(b)=(b,b)$. One checks that the $kk$-triangle associated to the bottom row of \eqref{map:cylseq} is isomorphic to 
\[
\xymatrix{j(\Omega B)\ar[r]^0&j(B)\ar[r]^{j(\Delta)}&j(B\oplus B)\ar[r]^(.6){[\id,-\id]}&B.}
\] 
Let $\xi:j(A)\to j(B)$ be the boundary map in the triangle induced by \eqref{seq:cylinder}. It follows from \eqref{map:cylseq} that there is a commutative diagram 
\[
\xymatrix{j(A)\ar[r]^\xi\ar[d]^{(j(\phi),j(\psi))}&j(B)\ar@{=}[d]\\
        j(B)\oplus j(B)\ar[r]_{[\id,-\id]}& j(B).}
\]
Hence $\xi=j(\phi)-j(\psi)$.
\end{proof}
Let $R$ be a unital, purely infinite simple algebra, let $E$ be a finite graph such that $L(E)$ is simple and let $\phi,\psi:L(E)\to R$ be nonzero algebra homomorphisms which agree on $DL(E)$. Let $R_\phi$ be as
in \eqref{rphi}. Put $p=\phi(1)$ and let
\[
B=pRp.
\]
By corestriction, we may consider $\phi$ and $\psi$ as homomorphisms $L(E)\to B$. Let
\[
C=\{f\in B[t]\mid (\exists a\in L(E))\ \ \phi(a)=f(0),\ \ \psi(a)=f(1)\}.
\]
Since $L(E)$ is simple, the map 
\[
C_{\phi,\psi}\to C,\quad (a,f)\mapsto f
\]
is an isomorphism. We shall identify $C=C_{\phi,\psi}$. Assume that $R$ is $K_1$-regular. Then $B$ is $K_1$-regular also, whence $K_0(\Omega B)=KV_1(B)=K_1(B)$. Hence the extension \eqref{seq:cylinder} induces an exact sequence
\begin{equation}\label{seq:kcyl}
\xymatrix{K_1(B)\ar[r]^{\partial'}&K_0(C)\ar[r]^\pi& K_0(L(E))\ar[r]^{\phi-\psi}&K_0(B)}
\end{equation}

The following two lemmas adapt \cite{ror}*{Lemmas 3.2 and 3.3} to the purely algebraic case. 

\begin{lem}\label{lem:3.2}
Let $u$ be as in Lemma \ref{lem:aux}, $\partial'$ as in \eqref{seq:kcyl} and $\langle \cdot,\cdot\rangle$ as in \eqref{pairing}. Let $\sigma\in K_0(C)^{E^1}$, $\sigma_e=[\phi(ee^*)]$. Then for every $x\in \Z^{E^1}$ we have
\[
\langle x,[u]\rangle=-\langle(I-A^t_E)x,\sigma\rangle
\]
\end{lem}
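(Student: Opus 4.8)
The first point to settle is in which group the asserted identity lives: the left-hand side $\langle x,[u]\rangle$ lies in $K_1(B)$, whereas the right-hand side lies in $K_0(C)$. Since $\partial'$ is listed among the data, the identity is to be read in $K_0(C)$ with $\partial'$ applied on the left, and on the right $\langle\cdot,\cdot\rangle$ denotes the $\Z$-bilinear pairing $\Z^{E^1}\times K_0(C)^{E^1}\to K_0(C)$, $\langle z,\sigma\rangle=\sum_{e\in E^1}z_e\sigma_e$. The plan is thus to prove
\[
\partial'\langle x,[u]\rangle=-\langle (I-A_E^t)x,\sigma\rangle\qquad(x\in\Z^{E^1}).
\]
All the operations involved ($\partial'$, the two pairings, and $I-A_E^t$) are additive, so it suffices to check the identity on the basis vectors $x=\chi_e$, $e\in E^1$.

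For $x=\chi_e$ the right-hand side is elementary. The relevant column of the incidence relation reads $(I-A_E^t)\chi_e=\chi_e-\sum_{f\in E^1,\,s(f)=r(e)}\chi_f$, so that, using the Cuntz--Krieger relation $\sum_{s(f)=r(e)}ff^*=r(e)$ at the (necessarily regular) vertex $r(e)$ together with additivity of the class of a sum of orthogonal idempotents,
\[
-\langle (I-A_E^t)\chi_e,\sigma\rangle=\Big[\phi\big(\textstyle\sum_{s(f)=r(e)}ff^*\big)\Big]-[\phi(ee^*)]=[\phi(r(e))]-[\phi(ee^*)]
\]
in $K_0(C)$; here $[\phi(r(e))]$ and $[\phi(ee^*)]=\sigma_e$ are the classes of the constant idempotent paths, which indeed lie in $C$ because $\phi$ and $\psi$ agree on $DL(E)$ and hence on all vertices. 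As a consistency check, both classes have the same image under $\pi$ (namely $[r(e)]=[ee^*]$ in $K_0(L(E))$), so their difference lies in $\ker\pi=\operatorname{im}\partial'$, exactly where $\partial'\langle\chi_e,[u]\rangle$ must land.

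It remains to identify the left-hand side with this difference. By \eqref{pairing}, $\langle\chi_e,[u]\rangle=\iota[\psi(e)\phi(e^*)]$ is the class in $K_1(B)$ of the corner unit $\psi(e)\phi(e^*)\in U(\phi(ee^*)B\phi(ee^*))$. Because $B$ is $K_1$-regular we have $K_1(B)=KV_1(B)=K_0(\Omega B)$ (as recorded before \eqref{seq:kcyl}), and by the long exact sequence of the extension \eqref{seq:cylinder} the map $\partial'$ is induced by the inclusion $\Omega B\hookrightarrow C$. I would therefore represent $[\psi(e)\phi(e^*)]$ by the standard idempotent loop attached to a unit and transport this loop into $M_2(C)$ using the four partial isometries $\phi(e),\phi(e^*),\psi(e),\psi(e^*)$. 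The governing algebraic identity is $\psi(e)\phi(e^*)=\phi(e)\,\big(\phi(e^*)\psi(e)\big)\,\phi(e^*)$, which exhibits the unit as the transport, along the Murray--von Neumann equivalence $(\phi(e),\phi(e^*)):\phi(ee^*)\sim\phi(r(e))$, of the corner unit $\phi(e^*)\psi(e)\in U(\phi(r(e))B\phi(r(e)))$; spreading this equivalence over the interval is precisely what turns the boundary of the unit into the discrepancy between the projections $\phi(ee^*)$ and $\phi(r(e))$ in $K_0(C)$. Carrying out this computation, following \cite{ror}*{Lemma 3.2} with partial isometries replaced by polynomial homotopies, gives $\partial'\langle\chi_e,[u]\rangle=[\phi(r(e))]-[\phi(ee^*)]$, matching the expression computed above and completing the proof.

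The main obstacle is this last step. In the $C^*$-setting one connects $\phi(ee^*)$ and $\phi(r(e))$ by a continuous rotation of partial isometries; here one must instead produce an honest polynomial idempotent over $C$, verify that it satisfies the endpoint conditions defining $C$ (its value at $0$ lands in $\phi(L(E))$ and at $1$ in $\psi(L(E))$, compatibly through a single element of $L(E)$), and then evaluate its $K_0(C)$-class. The bookkeeping with the four partial isometries and the polynomial replacement of the rotation is delicate, but all the homotopy-invariant identifications needed to run R\o rdam's argument are available in the algebraic setting: $KV_1=K_1$ for the $K_1$-regular ring $B$, and the isomorphism $K_1(R_\phi)\cong K_1(B)^{E^1}$ of \eqref{map:adjoint} coming from the fullness of each corner $\phi(ee^*)B\phi(ee^*)$.
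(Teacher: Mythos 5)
Your proposal is correct and follows essentially the same route as the paper: the paper's proof sets $u_e=u\phi(ee^*)+1-\phi(ee^*)$, uses Whitehead's lemma to produce $U_e(t)\in\GL(B[t])$ joining $1$ to $\diag(u_e,u_e^{-1})$, forms $V_e(t)=U_e(t)\diag(\phi(e),0)$ and $W_e(t)=\diag(\phi(e^*),0)U_e(t)^{-1}$, and then runs R\o rdam's Lemma 3.2 with these polynomial paths in place of the continuous unitary rotation --- exactly the transport of the corner unit along $(\phi(e),\phi(e^*))$ that you describe. Your reading of the displayed identity as $\partial'\langle x,[u]\rangle=-\langle(I-A_E^t)x,\sigma\rangle$, with $I-A_E^t$ understood as the edge matrix acting on $\Z^{E^1}$, is also the intended one.
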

\begin{proof} Let $u_e=u\phi(ee^*)+1-\phi(ee^*)$ ($e\in E^1$). By Whitehead's lemma there is $U_e(t)\in \GL(B[t])$ such that $U_e(0)=1$ and
$U_e(1)=\diag(u_e,u_e^{-1})$. Set $V_e(t)=U_e(t)\diag(\phi(e),0)$, $W_e(t)=\diag(\phi(e^*),0)U_e(t)^{-1}$. Now proceed as in the proof of 
\cite{ror}*{Lemma 3.2}, substituting $U_e(t)^{-1}$ and $W_e(t)$ for $U_e(t)^*$ and $V_e(t)^*$.
\end{proof}

\begin{lem}\label{lem:3.3}
Let $\lambda:R_\phi\to R_\phi$, $\lambda(a)=\sum_{e\in E^1}\phi(e)a\phi(e^*)$. If $j(\phi)=j(\psi)\in kk(L(E),B)$ then there is $\nu\in U(R_\phi)$
such that $[u]=[\nu^{-1}\lambda(\nu)]\in K_1(R_\phi)$.
\end{lem}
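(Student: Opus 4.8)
The plan is to reduce the statement to a single membership assertion in $K_1(R_\phi)$ and then to realize the resulting class by an honest unit. First I would record that $\lambda$ is a \emph{unital algebra endomorphism} of $R_\phi$. Indeed, using $\phi(e^*)\phi(f)=\delta_{e,f}\phi(r(e))$ together with the fact that $\phi(e)(\,\cdot\,)\phi(e^*)$ annihilates every block $\phi(gg^*)R\phi(gg^*)$ with $s(g)\neq r(e)$, a short computation gives $\lambda(a)\lambda(b)=\lambda(ab)$ and $\lambda(p)=p$ for $p=\phi(1)$. Hence $\lambda$ carries units to units and induces $\lambda_*:K_1(R_\phi)\to K_1(R_\phi)$, and for any $\nu\in U(R_\phi)$ one has $[\nu^{-1}\lambda(\nu)]=\lambda_*[\nu]-[\nu]=(\lambda_*-\id)[\nu]$. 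So it suffices to prove that $[u]\in\operatorname{im}(\lambda_*-\id)$: once $[u]=(\lambda_*-\id)[\nu_0]$ for some class $[\nu_0]$, I note that each block $\phi(ee^*)R\phi(ee^*)$ is a full corner of the purely infinite simple ring $R$ (the idempotent $\phi(ee^*)$ is nonzero since $\phi$ is injective, and full since $R$ is simple), hence is purely infinite simple unital; by Theorem \ref{thm:kpis} we get $K_1(R_\phi)=U(R_\phi)^{\mathrm{ab}}$, so $[\nu_0]$ is represented by a genuine unit $\nu$, and then $[u]=[\nu^{-1}\lambda(\nu)]$ as required.

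Next I would compute $\lambda_*$ under the identification \eqref{map:adjoint}, $K_1(R_\phi)\cong K_1(R)^{E^1}$. The $e$-component of $\lambda(a)$ is $\sum_{g:\,s(g)=r(e)}\phi(e)a_g\phi(e^*)$, and each corner map $b\mapsto \phi(e)b\phi(e^*)$ from the $g$-block to the $e$-block (for $s(g)=r(e)$) is implemented by the partial-isometry data $\phi(e),\phi(e^*)$, hence induces the identity on $K_1(R)$ by \cite{dwkk}*{Lemma 2.3}. Therefore $\lambda_*$ is the edge matrix $B_E$ (the $0$--$1$ matrix with $(B_E)_{e,g}=1$ iff $s(g)=r(e)$) tensored with $\id_{K_1(R)}$, so that $\lambda_*-\id$ is $B_E-I$ acting on $K_1(R)^{E^1}$, while $[u]$ corresponds to the tuple $([\,u\phi(ee^*)\,])_e=([\psi(e)\phi(e^*)])_e$.

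The crux is to show $[u]\in\operatorname{im}(B_E-I)$, and this is where the hypothesis $j(\phi)=j(\psi)$ enters. Because $\phi$ and $\psi$ become equal in $kk(L(E),B)$, all maps induced by $\phi-\psi$ vanish; by Lemma \ref{lem:triaresta} this turns \eqref{seq:kcyl} into a short exact sequence $0\to K_1(B)\xrightarrow{\partial'}K_0(C)\xrightarrow{\pi}K_0(L(E))\to 0$, so in particular $\partial'$ is injective. I would then feed the formula of Lemma \ref{lem:3.2}, $\langle x,[u]\rangle=-\langle(I-A_E^t)x,\sigma\rangle$ with $\sigma_e=[\phi(ee^*)]$, into this sequence: matching coefficients of $x_e$ for all $x\in\Z^{E^1}$ (using the isomorphism $s^*$ to pass between the vertex matrix $I-A_E^t$ and the edge matrix $I-B_E^t$) exhibits $\partial'[u]$ as $(B_E-I)\sigma\in K_0(C)^{E^1}$, i.e.\ it places $\partial'[u]$ in $\operatorname{im}(B_E-I)$. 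Transporting this back through the injective, $B_E$-equivariant map $\partial'$ then yields $[u]\in\operatorname{im}(\lambda_*-\id)$, which is exactly what was needed.

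I expect the main obstacle to be this last transport step, namely matching the \emph{integral} (and torsion) information carried by $j(\phi)=j(\psi)$ against the image of $B_E-I$ on $K_1(R)^{E^1}$. Over a field this would be plain linear duality with the perfect pairing \eqref{pairing}, but with coefficients in the arbitrary abelian group $K_1(R)$ one must control the Smith normal form of $B_E-I$ block by block and check that the divisibility conditions cutting out $\operatorname{im}(B_E-I)$ are precisely those enforced by the vanishing of the maps induced by $\phi-\psi$ on $K_0$ and $K_1$, as encoded in the exactness of \eqref{seq:kcyl} and in Lemma \ref{lem:3.2}. This bookkeeping is the algebraic counterpart of R\o rdam's argument in \cite{ror}*{Lemma 3.3}.
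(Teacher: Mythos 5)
The paper offers no argument of its own here: its entire proof of Lemma \ref{lem:3.3} is the sentence ``the proof is the same as that of \cite{ror}*{Lemma 3.3}'', so the real benchmark is R\o rdam's argument, whose skeleton you follow correctly for most of the way. Your reductions are sound: $\lambda$ is a unital endomorphism of $R_\phi$ inducing the edge matrix $B_E$ on $K_1(R_\phi)\cong K_1(R)^{E^1}$, so $[\nu^{-1}\lambda(\nu)]=(\lambda_*-\id)[\nu]$ and, since $R_\phi$ is a finite direct sum of corners of a purely infinite simple unital ring, Theorem \ref{thm:kpis} lets you realize any $K_1$-class by an honest unit; Lemma \ref{lem:3.2}, read through $\partial'$, does give $\partial'([u])=(B_E-I)\sigma$ componentwise in $K_0(C)^{E^1}$.

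The gap is the final ``transport'' step, and it is not Smith-normal-form bookkeeping. Knowing that $\partial'$ is injective and $B_E$-equivariant and that $\partial'([u])=(B_E-I)\sigma$ does not place $[u]$ in $\ker(\partial')+\operatorname{im}(B_E-I)=\operatorname{im}(B_E-I)$, because $\sigma$ itself need not lie in $\operatorname{im}(\partial')^{E^1}$. The precise obstruction is the snake-lemma class $\delta(\pi_*\sigma)\in\coker\bigl(B_E-I;K_1(B)^{E^1}\bigr)$ attached to the short exact sequence $0\to K_1(B)\to K_0(C)\to K_0(L(E))\to 0$, and mere \emph{exactness} of \eqref{seq:kcyl} --- which encodes only $K_*(\phi)=K_*(\psi)$ --- cannot kill it: if it did, your argument would prove the conclusion for every pair with $K_*(\phi)=K_*(\psi)$, which together with the rest of Section \ref{sec:kklift} would force $j(\phi)=j(\psi)$ and contradict the nontriviality of the $\Ext^1_\Z(K_0(L(E)),K_1(B))$-term in the universal coefficient filtration of \cite{dwkk}*{Theorem 7.11}. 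The missing ingredient, which is the actual content of R\o rdam's proof, is that $j(\phi)=j(\psi)$ makes the connecting morphism of the cylinder triangle of Lemma \ref{lem:triaresta} vanish, so the sequence $0\to K_1(B)\to K_0(C)\xrightarrow{\pi_*} K_0(L(E))\to 0$ is \emph{split}, not just exact. A splitting $s$ of $\pi_*$ then yields $\tilde\sigma=s(\pi_*\sigma)$ with $(B_E-I)\tilde\sigma=s\bigl((B_E-I)\pi_*\sigma\bigr)=0$ and $\sigma-\tilde\sigma=\partial'(\tau)$ for some $\tau\in K_1(B)^{E^1}$, whence $\partial'\bigl((B_E-I)\tau\bigr)=(B_E-I)\sigma=\partial'([u])$ and injectivity of $\partial'$ gives $[u]=(B_E-I)\tau=(\lambda_*-\id)\tau$. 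You should isolate and prove this splitting statement; without it the argument does not close.
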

\begin{proof} The proof is the same as that of \cite{ror}*{Lemma 3.3}.
\end{proof}

Let $S$ be an algebra, $E$ a finite graph, and $\phi,\psi:L(E)\to S$ algebra homomorphisms. We say that $\phi$ and $\psi$ are \emph{$1$-step $\ad$-homotopic} if either
\begin{itemize}
\item[a)] there is an MvN equivalence $(u,u'):\psi(1)\sim\phi(1)$ such that $\ad(u,u')\phi=\psi$,
\goodbreak
or
\goodbreak
\item[b)] $\phi$ and $\psi$ agree on $DL(E)$ and for $B=\phi(1)S\phi(1)$ there is $U(t)\in \GL(B_{\phi}[t])$ such that $U(0)=1$ and $\phi_{i+1}(e)=U(1)\phi(e)$, $\psi(e^*)=\phi_i(e^*)U(1)^{-1}$.
\end{itemize}
We say that $\phi$ and $\psi$ are \emph{$n$-step $\ad$-homotopic} if there is a sequence of algebra homomorphisms $\phi_i:L(E)\to S$, $1\le i\le n$, such that $\phi_1=\phi$, $\phi_n=\psi$, and $\phi_{i}$ and $\phi_{i+1}$ are $1$-step $\ad$-homotopic for $1\le i\le n-1$. Two unital homomorphisms $\phi$ and $\psi$ are \emph{$n$-step unitally $\ad$-homotopic} if they are $n$-$\ad$-homotopic and the $\phi_i$ can be chosen to be unital for all $1\le i\le n$. Call $\phi$ and $\psi$ (unitally) \emph{$\ad$-homotopic} if they are $n$-step (unitally) $\ad$-homotopic for some $n$.
  
\begin{rem}\label{rem:adhomo}
Observe that if in a) above $\phi$ and $\psi$ are unital, then $u\in U(S)$, so that $\phi$ and $\psi$ are conjugate in the usual, unital sense. Note also that in the situation b) above, $\phi$ and $\psi$ are homotopic. It follows that a unital homomorphism $\phi:L(E)\to L(E)$ is unitally $\ad$-homotopic to the identity if and only if it is homotopic to $\ad(u)$ for some $u\in U(L(E))$.  
\end{rem}	
		
\begin{thm}\label{thm:kkliftr} Let $E$ be a finite graph and $R$ a unital algebra. Assume that $L(E)$ and $R$ are purely infinite simple and that $R$ is $K_1$-regular. Then the canonical map 
\begin{equation}\label{map:kkliftr}
j : [L(E), R]_{M_2}\setminus\{0\} \to kk(L(E),R)
\end{equation}
is an isomorphism of monoids. In particular, $[L(E), R]_{M_2} \setminus\{0\}$ is the group completion of $[L(E),R]_{M_2}$. Moreover, we have the following:

\item[i)] If $\xi\in kk(L(E),R)$, then there is a nonzero algebra homomorphism $\phi:L(E)\to R$ such that $j(\phi)=\xi$. Moreover, $\phi$ may be chosen to be unital if $\xi$ is.

\item[ii)] Two nonzero (unital) algebra homomorphisms $\phi,\psi: L(E)\to R$ satisfy $j(\phi)=j(\psi)$ if and only if they are $M_2$-homotopic if and only if they are (unitally) $\ad$-homotopic if and only if they are $3$-step (unitally) $\ad$-homotopic. 
\end{thm}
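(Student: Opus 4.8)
The plan is to establish the four-way equivalence in part ii) by proving a cycle of implications, while the isomorphism statement and part i) will follow from assembling results already available in the excerpt. First I would observe that $3$-step (unitally) $\ad$-homotopic trivially implies (unitally) $\ad$-homotopic, and that by Remark \ref{rem:adhomo} each type of $1$-step $\ad$-homotopy (cases a) and b)) is either an honest homotopy or an $M_2$-homotopy, so (unitally) $\ad$-homotopic implies $M_2$-homotopic. Since $j$ factors through $[L(E),R]_{M_2}$, being $M_2$-homotopic implies $j(\phi)=j(\psi)$. Thus the only substantive implication is that $j(\phi)=j(\psi)$ forces $\phi$ and $\psi$ to be $3$-step (unitally) $\ad$-homotopic; this will be the heart of the argument and the main obstacle.

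To prove that implication I would proceed as follows. Given nonzero $\phi,\psi:L(E)\to R$ with $j(\phi)=j(\psi)$, first reduce to the case where $\phi$ and $\psi$ agree on $DL(E)$: by Proposition \ref{prop:k0agreeepsi} (noting $K_0(\phi)=K_0(\psi)$ since $j(\phi)=j(\psi)$), there is $\psi'$ with $j(\psi')=j(\psi)$ and $\psi'_{|DL(E)}=\phi_{|DL(E)}$, and the passage from $\psi$ to $\psi'$ is implemented by conjugation by an MvN equivalence on $\phi(1)R\phi(1)$, hence is a single $1$-step $\ad$-homotopy of type a). Having arranged agreement on $DL(E)$, set $u=\sum_{\alpha\in E^1}\psi(\alpha)\phi(\alpha^*)\in U(R_\phi)$ as in Lemma \ref{lem:aux}. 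The goal is to modify $\psi$ by type-b) $\ad$-homotopies so as to kill the class $[u]\in K_1(R_\phi)$, which is precisely the obstruction measured by the R\o rdam-type machinery of Lemmas \ref{lem:3.2} and \ref{lem:3.3}.

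The key computation uses Lemma \ref{lem:3.3}: since $j(\phi)=j(\psi)$ in $kk(L(E),B)$ for $B=\phi(1)R\phi(1)$, there is $\nu\in U(R_\phi)$ with $[u]=[\nu^{-1}\lambda(\nu)]$, where $\lambda(a)=\sum_{e}\phi(e)a\phi(e^*)$. I would then define an intermediate homomorphism $\phi_2$ by twisting $\phi$ via $\nu$ exactly as in the construction in the proof of Theorem \ref{thm:ror} (setting $\phi_2(e)=\nu\phi(e)$, $\phi_2(e^*)=\phi(e^*)\nu^{-1}$ on edges, unchanged on $E^0$), which is a $1$-step type-b) $\ad$-homotopy from $\phi$. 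A direct check, parallel to the end of the proof of Theorem \ref{thm:ror}, shows that the comparison unit between $\phi_2$ and $\psi$ now has trivial class in $K_1(R_\phi)$, meaning $\psi=\ad$-homotopic to $\phi_2$ by a single type-b) step implemented by a $U(t)\in\GL(B_{\phi}[t])$ with $U(0)=1$. Chaining the Proposition \ref{prop:k0agreeepsi} step, the $\nu$-twist, and the final step gives a $3$-step chain; in the unital case all intermediate maps are unital by construction, giving the unital version.

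Finally, to obtain the isomorphism statement itself: part i) (surjectivity and realizability of unital classes) follows by combining Theorem \ref{thm:ror} with Remark \ref{rem:xi0detxi1}, which together show that any $\xi\in kk(L(E),R)$ is realized by a homomorphism inducing $K_0(\xi)$ and the correct $K_1$ data, and that this data determines $\xi$ by the naturality of the universal-coefficient-type sequence; injectivity of \eqref{map:kkliftr} on $[L(E),R]_{M_2}\setminus\{0\}$ is exactly the nontrivial implication of part ii) just proved. That $[L(E),R]_{M_2}\setminus\{0\}$ is then the group completion of $[L(E),R]_{M_2}$ follows formally once we know the map to the group $kk(L(E),R)$ is a bijection of monoids. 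I expect the genuinely delicate point to be verifying that the $\nu$-twist reduces the obstruction class correctly—i.e.\ the bookkeeping matching Lemma \ref{lem:3.3} with Lemma \ref{lem:aux}'s formula for $[u]$—since this is where the algebraic analogue of R\o rdam's argument must be shown to close up, and where one must confirm that no further steps beyond three are needed.
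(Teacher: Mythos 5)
Your overall architecture for part ii) matches the paper's, but there are two genuine gaps. The more serious one is in part i). You claim that Theorem \ref{thm:ror} together with Remark \ref{rem:xi0detxi1} suffices because the $(K_0,K_1)$ data ``determines $\xi$ by the naturality of the universal-coefficient-type sequence.'' It does not: the UCT-type sequence has a nontrivial $\Ext$-kernel, and after Theorem \ref{thm:ror} produces $\psi$ with the correct $K_0$ and $K_1\gamma$ data, there remains a discrepancy $\eta-j(\bar\psi)$ lying in $kk(L(E),B)^2\cong\Ext^1_\Z(K_0(L(E)),K_1(B))$ (for $B=\psi(1)R\psi(1)$), which is nonzero in general, e.g.\ when $K_0(L(E))$ has torsion. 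One must still show that every class in this $\Ext$ group is of the form $j(\phi)-j(\bar\psi)$ for $\phi$ a twist of $\psi$ by a unit $u\in U(R_\psi)$; that is precisely what Lemma \ref{lem:3.2} and the argument of R{\o}rdam's Theorem 3.1 are for. Your proposal omits this step entirely (you invoke Lemma \ref{lem:3.2} only in the injectivity discussion, where it is in fact not used).

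The second gap is in the middle of your $3$-step chain. The one-sided twist $\phi_2(e)=\nu\phi(e)$, $\phi_2(e^*)=\phi(e^*)\nu^{-1}$ is not a type-b) step: type b) requires the twisting unit to equal $U(1)$ for some $U(t)\in\GL(B_\phi[t])$ with $U(0)=1$, i.e.\ to lie in $U(B_\phi)^0$, and $\nu$ is merely a unit satisfying $[u]=[\nu^{-1}\lambda(\nu)]$, with no control on $[\nu]$ itself. The bookkeeping also fails: the comparison unit between $\psi$ and your $\phi_2$ is $u\nu^{-1}$, whose $K_1$-class is $[u]-[\nu]=[\lambda(\nu)]-2[\nu]$, which is not $0$ in general. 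The correct decomposition is: (1) a type-a) step making $\phi$ and $\psi$ agree on $DL(E)$; (2) a type-b) step replacing $\psi$ by the homomorphism whose comparison unit with $\phi$ is exactly $\nu^{-1}\lambda(\nu)$ --- legitimate because $\nu^{-1}\lambda(\nu)u^{-1}$ is trivial in $K_1(B_\phi)$ by Lemma \ref{lem:3.3} and hence lies in $U(B_\phi)^0$ by $K_1$-regularity and Proposition \ref{prop:kv1pis}; (3) the observation that the homomorphism with comparison unit exactly $\nu^{-1}\lambda(\nu)$ is a conjugate $\ad(\nu^{\pm 1})\phi$ of $\phi$ (since $\sum_\alpha\nu^{-1}\phi(\alpha)\nu\phi(\alpha^*)=\nu^{-1}\lambda(\nu)$), which is a type-a) step. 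Note that Proposition \ref{prop:kv1pis} is where the $K_1$-regularity hypothesis enters part ii), converting ``trivial in $K_1$'' into ``connected to $1$''; your write-up never performs this conversion.
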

\begin{proof} The map $[L(E),R]_{M_2} \to kk(L(E),R)$ is a monoid homomorphism by the same argument as in Proposition \ref{prop:homotopis}.

Let $\xi\in kk(L(E),R)$ and let $\gamma:\ker(I-A_E^t)\to K_1(L(E))$ be as in \eqref{map:gamma}. By Theorem \ref{thm:ror} there exists
a nonzero algebra homomorphism $\psi:L(E)\to  R$ such that $K_0(\xi)=K_0(\psi)$ and $K_1(\xi)\gamma=K_1(\psi)\gamma$. Let $B=\psi(1) R\psi(1)$, $\inc : B \to R $ the inclusion and $\bar{\psi}:L(E)\to B$ the corestriction of $\psi$. Then $j(\inc)$ is an isomorphism, and 
for $\eta=j(\inc)^{-1}\xi$ we have $\eta-j(\bar{\psi})\in kk(L(E),B)^2\cong \Ext_\Z^1(K_0(L(E)),K_1(B))$, by \cite{dwkk}*{Theorem 7.11}. To prove that the map of the theorem is surjective, it suffices to show that there exists $u\in U(R_\psi)$ such that for $\phi:L(E)\to B$, $\phi(e)=u\psi(e)$, $\phi(e^*)=\psi(e^*)u^{-1}$, we have $\eta-j(\bar{\psi})=j(\phi)-j(\bar{\psi})$. The argument of the proof of \cite{ror}*{Theorem 3.1} shows this. Next we show that \eqref{map:kkliftr} is injective, and that the different notions of homotopy agree. It follows from Lemma \ref{lem:adhomo}, \cite{dwkk}*{Lemma 2.3} and the definition of $\ad$-homotopy that $\ad$-homomotopic homomorphisms $L(E)\to R$ are $M_2$-homotopic, and from the universal property of $kk$ that $j$ sends homotopic maps to equal maps. Conversely, let $\phi,\psi:L(E)\to  R$ be algebra homomorphisms such that $j(\phi)=j(\psi)$. Then $K_0(\phi)=K_0(\psi)$, whence there exist
for each $e\in E^1$ elements $u_e\in \phi(ee^*) R\psi(ee^*)$ and $u'_e\in \psi(ee^*) R\phi(ee^*)$ such that $u_eu'_e=\phi(ee^*)$ and $u'_eu_e=\psi(ee^*)$. Thus $u=\sum_{e\in E^1}u_e\in \phi(1)R\psi(1)$, $u'=\sum_{e\in E^1}u'_e\in\psi(1) R\phi(1) $, and $\psi'=\ad(u,u')\psi$ agrees with $\phi$ on $DL(E)$. Hence upon spending a $1$-step $\ad$-homotopy from $\psi$ to $\psi'$ if necessary, we may assume that $\phi$ and $\psi$ agree on $DL(E)$. Let  $B=\phi(1) R\phi(1)$ and let $u\in B_\phi$ be as in Lemma \ref{lem:aux}; we have
\begin{equation}\label{phiu}
\psi(e)=u\phi(e), \quad \psi(e^*)=\phi(e^*)u^{-1}.
\end{equation}
Observe that, because $R$ is purely infinite and $K_1$-regular, the same is true of $B$. By Lemma \ref{lem:3.3} and $K_1$-regularity of $B$, there are $\nu\in\GL(B_\phi)$ and 
$U(t)\in \GL(B[t])$ such that $U(0)=1$ and $U(1)=\nu^{-1}\lambda(\nu)u^{-1}$. Hence, upon using a second $1$-step $\ad$-homotopy, we may assume that $u=\nu^{-1}\lambda(\nu)$. A calculation shows that $\psi=\ad(\nu)\phi$. Thus a third $1$-step $\ad$-homotopy concludes the proof of the nonunital part of the theorem. If $\xi$ is unital, then by Theorem \ref{thm:ror} there is a unital algebra homomorphism $\psi:L(E)\to R$ such that $K_0(\xi)=K_0(\psi)$ and $K_1(\xi)\gamma=K_1(\psi)\gamma$. The argument used above to prove the surjectivity of \eqref{map:kkliftr} subsituting $\xi$ for $\eta$, shows that there is a unital algebra homomorphism $\phi:L(E)\to R$ such that $j(\phi)=\xi$. Finally the same argument used above for nonunital homomorphisms shows that two unital homomorphisms $L(E)\to R$ go to the same element in $kk$ if and only if they are unitally $3$-step $\ad$-homotopic.  
\end{proof}

\begin{rem}\label{rem:gagp}
 By Lemma \ref{lem:adhomo}, we have that if $R$ and $L(E)$ are as in Theorem \ref{thm:kkliftr}, then $[L(E), M_\infty R]$ is an abelian monoid, with operation induced by the map \eqref{map:boxplus}, and the canonical homomorphism  $[L(E), M_\infty R] \setminus \{ 0 \}\to kk(L(E),R)$ is an isomorphism of monoids.
\end{rem}

\section{Homotopy classification theorem}\label{sec:main}
\begin{thm}\label{thm:main2}
Let $E$ and $F$ be finite graphs such that $L(E)$ and $L(F)$ are purely infinite simple. 
Let $\xi_0:K_0(L(E))\iso K_0(L(F))$ be an isomorphism. Then there exist nonzero algebra homomorphisms $\phi:L(E)\to L(F)$ and $\psi:L(F)\to L(E)$ such that $K_0(\phi)=\xi_0$, $K_1(\psi)=\xi_0^{-1}$, $\psi\phi\simh_{M_2}\id_{L(E)}$ and $\phi\psi\simh_{M_2}\id_{L(F)}$. If $\xi_0$ is unital then we may choose $\phi$ and $\psi$ to be unital homomorphisms such that $\phi\psi$ and $\psi\phi$ are homotopic to the respective identity maps.
\end{thm}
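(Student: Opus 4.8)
The plan is to reduce the entire statement to Theorem \ref{thm:kkliftr}. Since Leavitt path algebras of finite graphs are $K_n$-regular for every $n$ (as recalled in the introduction), both $L(E)$ and $L(F)$ are $K_1$-regular, so Theorem \ref{thm:kkliftr} applies to each of the pairs $(L(E),L(F))$, $(L(F),L(E))$, $(L(E),L(E))$ and $(L(F),L(F))$. The heart of the matter is to promote the $K_0$-isomorphism $\xi_0$ to a $kk$-equivalence $j(\phi)$; once this is achieved, lifting the inverse class and reading off the compositions in $kk$ will yield everything.

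The first step is to construct a nonzero homomorphism $\phi\colon L(E)\to L(F)$ with $K_0(\phi)=\xi_0$ and $K_1(\phi)$ an isomorphism, via Theorem \ref{thm:ror}. That theorem needs, besides $\xi_0$, a map $\xi_1\colon\ker(I-A_E^t)\to K_1(L(F))$. Here I use that $L(E)$ and $L(F)$ are purely infinite simple, so $\sink(E)=\sink(F)=\emptyset$ and $I-A_E^t$, $I-A_F^t$ are square integer matrices; hence $\ker(I-A_E^t)$ is free of rank equal to the free rank of $K_0(L(E))=\coker(I-A_E^t)$, and likewise for $F$. As $\xi_0$ is an isomorphism these free ranks agree, so I may fix an isomorphism $\bar\xi_0\colon\ker(I-A_E^t)\iso\ker(I-A_F^t)$ and set $\xi_1=\gamma_F\,\bar\xi_0$, where $\gamma_E,\gamma_F$ are the homomorphisms \eqref{map:gamma} attached to $E$ and $F$. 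Let $\phi$ be produced by Theorem \ref{thm:ror}. Applying $\phi$ to the natural sequence of Remark \ref{rem:xi0detxi1}, the induced map on the subgroup $K_0(L(E))\otimes K_1(\ell)$ is $\xi_0\otimes\id$, which is an isomorphism; and using $K_1(\phi)\gamma_E=\xi_1$ together with $\partial_E\gamma_E=-\id$ and $\partial_F\gamma_F=-\id$ from Proposition \ref{prop:section}, the induced map on the quotient $\ker(I-A_E^t)\to\ker(I-A_F^t)$ is exactly $\bar\xi_0$, again an isomorphism. By the five lemma $K_1(\phi)$ is an isomorphism, whence $j(\phi)$ is invertible in $kk$ by \cite{dwkk}*{Proposition 5.10}.

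Next I would invoke Theorem \ref{thm:kkliftr}(i) to lift $j(\phi)^{-1}\in kk(L(F),L(E))$ to a nonzero homomorphism $\psi\colon L(F)\to L(E)$ with $j(\psi)=j(\phi)^{-1}$; in particular $K_0(\psi)=\xi_0^{-1}$. Then $j(\psi\phi)=j(\psi)j(\phi)=\id=j(\id_{L(E)})$, and since both $\psi\phi$ and $\id_{L(E)}$ are nonzero homomorphisms $L(E)\to L(E)$, the injectivity in Theorem \ref{thm:kkliftr}(ii) gives $\psi\phi\simh_{M_2}\id_{L(E)}$; symmetrically $\phi\psi\simh_{M_2}\id_{L(F)}$. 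This settles the non-unital assertion.

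The main obstacle is the unital refinement, namely upgrading the $M_2$-homotopies to honest polynomial homotopies, since Theorem \ref{thm:kkliftr}(ii) together with Remark \ref{rem:adhomo} only delivers a homotopy to an inner automorphism. Assume $\xi_0([1_{L(E)}])=[1_{L(F)}]$. Then $\phi$ may be taken unital (unital case of Theorem \ref{thm:ror}), $j(\phi)$ and hence $j(\phi)^{-1}$ are unital $kk$-classes, and $\psi$ may be chosen unital by Theorem \ref{thm:kkliftr}(i). By Theorem \ref{thm:kkliftr}(ii) and Remark \ref{rem:adhomo} we have $\psi\phi\simh\ad(a)$ for some $a\in U(L(E))$; replacing $\psi$ by $\ad(a^{-1})\circ\psi$, which is again unital and has the same $K_0$ and the same $kk$-class, yields $\psi\phi\simh\id_{L(E)}$. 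For the other composite, $\phi\psi\simh\ad(b)$ for some $b\in U(L(F))$, and the idempotent identity $(\phi\psi)^2=\phi(\psi\phi)\psi\simh\phi\psi$ combined with $(\phi\psi)^2\simh\ad(b^2)$ forces $\ad(b^2)\simh\ad(b)$, whence $\ad(b)\simh\id_{L(F)}$ and $\phi\psi\simh\id_{L(F)}$. This inner-automorphism modification of $\psi$, followed by the idempotent argument, is exactly what removes the ambiguity coming from non-connected invertibles and produces genuine homotopies for both compositions simultaneously.
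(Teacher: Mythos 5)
Your proof is correct and follows essentially the same strategy as the paper's: promote $\xi_0$ to an invertible class in $kk(L(E),L(F))$, lift that class and its inverse to algebra homomorphisms via Theorem \ref{thm:kkliftr}, use the injectivity statement there to get the $M_2$-homotopies, and correct by inner automorphisms in the unital case. The only local differences are that the paper obtains the $kk$-class directly from \cite{dwkk}*{Corollary 7.19} rather than via Theorem \ref{thm:ror} plus your five-lemma argument, and that in the unital case it finishes by noting that $\psi$ acquires left and right homotopy inverses (hence is a homotopy equivalence) and replacing $\phi$ by the homotopy inverse of $\psi$, rather than your conjugation-plus-idempotent trick; both variants are valid.
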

\begin{proof}
Because $\ker(I-A_E^t)$ and $\ker(I-A_F^t)$ are isomorphic to the quotients of $K_0(L(E))$ and $K_0(L(F))$ modulo torsion, the assumed isomorphism $\xi_0$ induces an isomorphism $\xi_1 : \ker(I-A_E^t) \iso \ker(I-A_F^t)$. By \cite{dwkk}*{Corollary 7.19}, there exists
$\xi\in kk(L(E),L(F))$ such that for the injective homomorphism $\gamma_F:\ker(I-A_F^t)\to K_1(L(F))$ of \eqref{map:gamma}, we have $K_0(\xi)=\xi_0$ and $K_1(\xi)\gamma_E=\gamma_F\xi_1$. Hence $\xi\in kk(L(E),L(F))$ is an isomorphism by \cite{dwkk}*{Proposition 5.10}. By Theorem \ref{thm:kkliftr} there are algebra homomorphisms $\phi:L(E)\to L(F)$ and $\psi:L(F)\to L(E)$ such that $j(\phi)=\xi$ and $j(\psi)=\xi^{-1}$, which may be chosen unital if $\xi_0$ is. Again by Theorem \ref{thm:kkliftr}, $\phi\psi$ and $\psi\phi$ are $M_2$-homotopic to the respective identity maps. If moreover $\phi$ and $\psi$ are unital, then by Theorem \ref{thm:kkliftr}, $\phi\psi$ and $\psi\phi$ are unitally $\ad$-homotopic to identity maps.  Hence by Remark \ref{rem:adhomo} there are $u\in U(L(E))$ and $v\in U(L(F)$ such that $\ad(v)\phi\psi$ and $\psi\phi\ad(u)$ are homotopic to identity maps. Hence $\psi$ is a homotopy equivalence. Upon replacing $\phi$ by the homotopy inverse of $\psi$, we get the last statement of the theorem.    
\end{proof}

Recall from \cite{black}*{Chapter III, Section 6.2} that a \emph{scaled ordered group} is an ordered group together with a choice of order unit. If $R$ is a unital algebra, then $K_0(R)$
has a natural structure of scaled ordered group whose positive cone is the image of $\cV(R)$ and whose order unit is $[1_R]$. 

We say that two unital algebras $R$ and $S$ are \emph{unitally homotopy equivalent} if there are unital homomorphisms $\phi:R\to S$ and $\psi:S\to R$ such that $\psi\phi$ and $\phi\psi$ are homotopic to the respective identity maps.

\begin{coro}\label{coro:main2} Let $E$ and $F$ be finite graphs such that $L(E)$ and $L(F)$ are simple. Assume that $K_0(L(E))$ and $K_0(L(F))$ are isomorphic as scaled ordered groups. Then either
\item[i)] there is $1\le n$ such that $L(E)\cong L(F)\cong M_n$
\goodbreak
or
\goodbreak
\item[ii)] $L(E)$ and $L(F)$ are purely infinite and unitally homotopy equivalent.  
\end{coro}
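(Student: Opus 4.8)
The plan is to reduce to the dichotomy that a simple Leavitt path algebra of a finite graph is either a matrix algebra or purely infinite simple, and then separate cases according to the ordered-group structure on $K_0$. First I would invoke Remark \ref{rem:simpnopis}: since $E$ and $F$ are finite and $L(E),L(F)$ are simple, each of $L(E)$ and $L(F)$ is either purely infinite simple or isomorphic to some $M_n$. The observation that organizes the argument is that the ordered-group structure detects which alternative holds. If $L(E)\cong M_n$, then under the standard identification $K_0(M_n)\cong\Z$ (generated by the class of a rank-one idempotent) the positive cone is $\Z_{\ge 0}$, a proper cone, and the order unit $[1_{M_n}]$ is $n$; whereas if $L(E)$ is purely infinite simple, then by Theorem \ref{thm:kpis} we have $K_0(L(E))=\cV(L(E))\setminus\{[0]\}$, so the positive cone is the whole group. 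Since an isomorphism of scaled ordered groups maps positive cone bijectively onto positive cone, it maps a proper cone to a proper cone and a full cone to a full cone; hence the hypothesis forces $L(E)$ and $L(F)$ to be of the same type, ruling out the mixed possibility.

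Next I would dispatch the matrix case. If both $L(E)\cong M_n$ and $L(F)\cong M_m$, then the assumed isomorphism is an isomorphism of scaled ordered groups $(\Z,\Z_{\ge 0},n)\iso(\Z,\Z_{\ge 0},m)$. The only order-preserving automorphism of $(\Z,\Z_{\ge 0})$ is the identity, so the isomorphism is $\pm\id$ and positivity forces it to be $\id$; preservation of the order unit then gives $n=m$, whence $L(E)\cong M_n\cong L(F)$, which is alternative i).

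Finally, in the remaining case both $L(E)$ and $L(F)$ are purely infinite simple, and in particular purely infinite. Forgetting the order, the given isomorphism is a group isomorphism $\xi_0:K_0(L(E))\iso K_0(L(F))$ that preserves the order unit, i.e.\ $\xi_0([1_{L(E)}])=[1_{L(F)}]$, so $\xi_0$ is unital. Applying the unital part of Theorem \ref{thm:main2} to $\xi_0$ produces unital homomorphisms $\phi:L(E)\to L(F)$ and $\psi:L(F)\to L(E)$ with $\psi\phi\simh\id_{L(E)}$ and $\phi\psi\simh\id_{L(F)}$, that is, a unital homotopy equivalence; this is alternative ii). I expect the only real subtlety—rather than a genuine obstacle—to be the bookkeeping around the ordered-group structure: confirming via Theorem \ref{thm:kpis} that purely infinite simplicity really makes the whole group positive, and that the matrix case carries the proper cone $\Z_{\ge 0}$ with order unit $n$, so that the scaled-ordered hypothesis both separates the two regimes and pins down $n=m$. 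Everything else is a direct appeal to Remark \ref{rem:simpnopis} and Theorem \ref{thm:main2}.
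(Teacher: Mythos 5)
Your proposal is correct and follows essentially the same route as the paper: Remark \ref{rem:simpnopis} to get the matrix/purely-infinite dichotomy, Theorem \ref{thm:kpis} to see that the positive cone is all of $K_0$ in the purely infinite case versus the proper cone $\Z_{\ge 0}$ with order unit $n$ for $M_n$, and then Theorem \ref{thm:main2} in the purely infinite case. The paper's proof is just a terser version of the same case analysis, so there is nothing to add.
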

\begin{proof} By Remark \ref{rem:simpnopis} if $L(E)$ is simple but not purely infinite, then there is $n\ge 1$ such that 
$L(E)\cong M_n$. In this case $K_0(L(E))\cong \Z$ with the usual order and $[1_{L(E)}]$ corresponds to $n$. On the other hand if $R$ is a purely infinite simple unital algebra, then every element of $K_0(R)$ is nonnegative, by Theorem \ref{thm:kpis}. The proof is concluded using Theorem \ref{thm:main2} and observing that the identity is the only automorphism of $\Z$ as an ordered group. 
\end{proof}

\section{Algebra extensions}\label{sec:ext}
Let $R$ be an algebra. For $x\in R^\N$, let $\supp (x)=\{n\in\N:x_n\ne 0\}$. For a matrix $a\in R^{\N\times\N}$ and $i\in\N$, put $a_{i,*}$ and $a_{*,i}$ for the $i^{th}$ row and column, and set
\begin{gather*}
\Im (a)=\{a_{i,j}:i,j\in\N\}\subset R,\\
N(a)=\sup\{\#\supp(a_{i,*}), \#\supp(a_{*,i}):i\in\N\}. 
\end{gather*} 
Consider the algebras 
\begin{gather*}
\Gamma R=\{a\in R^{\N\times\N}: \text{each row and column of $a$ is finitely supported}\}, \\
\Gamma' R=\{a\in\Gamma R: \#\Im(a)<\infty \text{ and } N(a)<\infty\},\\
\Sigma R=\Gamma R/M_\infty R,\quad\Sigma'R=\Gamma'R/M_\infty R.  
\end{gather*}
The algebras $\Sigma R$ and $\Sigma'R$ are the \emph{Wagoner} and \emph{Karoubi} suspensions.
\begin{prop}\label{prop:wagopis}
Let $R$ be either a division algebra or a purely infinite simple unital algebra. Then $\Sigma R$ and $\Sigma' R$ are purely infinite simple. 
\end{prop}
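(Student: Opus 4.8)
The plan is to reduce the claim for $\Sigma R$ and $\Sigma' R$ to the defining property of pure infinite simplicity: for every nonzero $x$ there exist $s,t$ with $sxt$ an infinite idempotent. Since the two suspensions are quotients of $\Gamma R$ and $\Gamma' R$ by $M_\infty R$, I would work with a representative matrix $a\in\Gamma R$ (or $\Gamma' R$) whose class $\bar a\in\Sigma R$ is nonzero, i.e.\ $a$ has infinitely many nonzero entries. The first step is to locate, inside the infinite support of $a$, enough ``room'' to build an infinite idempotent. Concretely, I would pick an infinite sequence of positions $(i_k,j_k)$ with $a_{i_k,j_k}\ne 0$; because each row and each column of $a$ is finitely supported, I can thin this sequence out so that the chosen entries sit in pairwise distinct rows and pairwise distinct columns, giving an ``infinite generalized diagonal'' of nonzero entries $a_{i_k,j_k}$.

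The second step handles each diagonal entry using the hypothesis on $R$. If $R$ is a division algebra, each $a_{i_k,j_k}$ is invertible; if $R$ is purely infinite simple unital, for each $k$ there are $b_k,c_k\in R$ with $b_k\,a_{i_k,j_k}\,c_k=1$. In either case I can produce scalar matrices $s,t$ (supported on the relevant rows/columns and lying in $\Gamma R$, and in $\Gamma' R$ when we start there, since the construction uses finitely supported rows/columns and, after compression, a controlled image and bounded $N$) such that $s\,a\,t$ is, modulo $M_\infty R$, the infinite identity matrix $\sum_k \epsilon_{k,k}\otimes 1$ along an infinite index set. That infinite identity is an infinite idempotent in $\Sigma R$: split the index set into two infinite halves to write it as a sum of two orthogonal idempotents, one Murray--von Neumann equivalent to the whole (via a shift-type matrix in $\Gamma R$) and the other nonzero. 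I would then check that conjugating/compressing by the shift gives the required MvN equivalence $p\sim q$ with $r=p-q\ne 0$, exactly as in the definition of an infinite idempotent. This establishes pure infiniteness; simplicity follows because the ideal generated by any nonzero $\bar x$ contains an infinite idempotent equivalent to the identity, hence is everything, so the only nonzero ideal is $\Sigma R$ itself.

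For the $\Gamma' R$/$\Sigma' R$ variant the same construction works, but I must be careful that all auxiliary matrices $s,t$, the shift, and the idempotents stay inside $\Gamma' R$: they have entries drawn from a finite subset of $R$ (the finitely many $b_k,c_k$ can be arranged to come from $\{0,1\}$ in the division case, and more care is needed in the purely infinite case to keep $\#\Im$ finite) and bounded $N$. The main obstacle I anticipate is precisely this finiteness bookkeeping in the $\Sigma' R$ case: ensuring the compressing matrices have finite image and uniformly bounded row/column support so that $s a t$ and the idempotents genuinely lie in $\Gamma' R$, rather than only in $\Gamma R$. I expect this is handled by choosing the thinned diagonal so that the selected entries $a_{i_k,j_k}$ repeat among a fixed finite subset of $R$ (possible because $\#\Im(a)<\infty$ for $a\in\Gamma' R$), and then using the $C_2$-structure or the explicit $b,c$ for only finitely many distinct values, keeping everything within the required finiteness constraints.
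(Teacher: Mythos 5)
Your proposal follows essentially the same route as the paper: compress a representative $a\in\Gamma R\setminus M_\infty R$ onto a generalized diagonal of nonzero entries using the finite-support conditions, then use the hypothesis on $R$ entrywise to turn that diagonal into the identity. The paper finishes more quickly, producing $A,B\in\Gamma'R$ with $AMB=1$ and invoking the characterization of unital purely infinite simple rings (every nonzero $x$ satisfies $axb=1$ for some $a,b$), whereas you build an infinite idempotent and argue simplicity separately; both endpoints work.

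One step needs tightening. Choosing the entries $a_{i_k,j_k}$ in pairwise distinct rows and pairwise distinct columns is not enough to make the compressed matrix diagonal: with $s=\sum_k\epsilon_{k,i_k}$ and $t=\sum_l\epsilon_{j_l,l}$ the $(k,l)$ entry of $sat$ is $a_{i_k,j_l}$, and working modulo $M_\infty R$ does not dispose of a possibly infinite set of nonzero off-diagonal terms. You must thin further so that $a_{i_k,j_l}=0$ for $k\ne l$; this is again possible because each row and column is finitely supported (excluding finitely many rows and columns still leaves infinitely many nonzero entries), and the paper arranges it by confining the supports of the selected columns to pairwise disjoint intervals $[N_j+1,N_{j+1}]$ before selecting one row index from each interval. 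Your bookkeeping for the $\Sigma'R$ case --- only finitely many distinct entry values of $a$, hence only finitely many distinct $b_k,c_k$, keeping $\#\Im$ finite and $N$ bounded --- is exactly the point that makes the auxiliary matrices land in $\Gamma'R$.
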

\begin{proof} 
It suffices to show that if $M\in \Gamma R\setminus M_\infty R$ then there are $A,B\in \Gamma' R$ such that $AMB=1$. The conditions defining 
$\Gamma'$ and $\Gamma$ imply that there are infinite, strictly increasing sequences $Y=\{y_1, y_2,\dots\}, N=\{N_1=1,N_2,\dots\}\subset\N$ such that for each $j$, $\emptyset\ne\supp(m_{*,y_j})\subset[N_j+1,N_{j+1}]$. Let $B_1$ be the matrix whose $n^{th}$ column is the canonical basis element $\epsilon_{y_n}$. The support of the $j^{th}$-column of the matrix $MB_1$ is contained in $[N_j+1,N_{j+1}]$. Choose, for each $j$, an element $x_j\in [N_{j+1},N_{j+1}]$ such that $(MB_1)_{x_j,j}\ne 0$. Let $A_1$ be the matrix whose $j^{th}$ row is the basis element $\epsilon_{x_j}$. The matrix $A_1MB_1$ is diagonal, and all its diagonal entries are nonzero. Hence by our hypothesis on $R$ there are diagonal matrices $A_2$ and $B_2$ such that $A_2A_1MB_1B_2=1$.  
\end{proof}

Recall from \cite{dwkk}*{Lemma 2.8 and the paragraph below} that when $R$ is unital, every extension of an algebra $A$ by $M_\infty R$ is classified by a homomorphism $A\to \Sigma R$. By \cite{dwkk}*{Lemma 2.5}, the sets $[A,\Sigma R]_{M_2}$ and
$[A,\Sigma'R]_{M_2}$ are abelian monoids with the sum induced by \eqref{map:boxplus}. Put
\[
\cExt(A,R)=[A,\Sigma R]_{M_2},\quad \cExt(A,R)_f=[A,\Sigma'R]_{M_2}.
\]
By definition, there is a canonical map $\cExt(A,R)_f\to\cExt(A,R)$; by \cite{dwkk}*{Remark 5.8} there is also a natural map $\cExt(A,R)\to kk_{-1}(A,R)$. 
\begin{thm}\label{thm:ext}
Let $R$ be either a division algebra or a $K_0$-regular purely infinite simple unital algebra and $E$ a finite graph such that $L(E)$ is simple. Then the canonical maps
\[
\cExt(L(E),R)_f\to\cExt(L(E),R)\to kk_{-1}(L(E),R) 
\]
are isomorphisms. Moreover every nonzero element of each of these groups represents the $M_2$-homotopy class a nontrivial extension of $L(E)$ by $M_\infty(R)$. 
\end{thm}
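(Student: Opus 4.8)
The plan is to deduce everything from Theorem~\ref{intro:kklift} (which, via Remark~\ref{rem:simpnopis} and Proposition~\ref{prop:homotopis}, applies whenever $L(E)$ is simple), applied with the suspensions $\Sigma R$ and $\Sigma'R$ in place of the target $R$, and then to compare the three objects through the identifications $kk(L(E),\Sigma R)\cong kk(L(E),\Sigma'R)\cong kk_{-1}(L(E),R)$. Recall from \cite{dwkk}*{Remark 5.8} that the canonical map $\cExt(L(E),R)=[L(E),\Sigma R]_{M_2}\to kk_{-1}(L(E),R)$ is, under this identification, the map induced by the canonical functor $j$, and similarly for $\Sigma'R$. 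I would first record that $\Sigma R$ and $\Sigma'R$ satisfy the hypotheses of Theorem~\ref{intro:kklift}: they are purely infinite simple by Proposition~\ref{prop:wagopis}, and they are unital because the identity matrix lies in $\Gamma R$ and in $\Gamma'R$ (its image is $\{0,1\}$ and $N=1$) and descends to a unit of the quotient. The only remaining hypothesis, $K_1$-regularity, is the one that must be earned, and I expect this to be the main obstacle.

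For $K_1$-regularity I would argue that $\Gamma R$ and $\Gamma'R$ are flasque. The block-diagonal sum $a\mapsto a\oplus a\oplus\cdots$ preserves the conditions defining $\Gamma R$ (finitely supported rows and columns) and those defining $\Gamma'R$ (additionally $\#\Im(a)<\infty$ and $N(a)<\infty$); this operation is $\ell$-linear, so the flasque structure persists after applying $-\otimes_\ell\ell[t_1,\dots,t_m]$. Hence $\Gamma R[t_1,\dots,t_m]$ and $\Gamma'R[t_1,\dots,t_m]$ are flasque and have vanishing $K$-theory in all degrees. Applying the $K$-theory long exact sequence to the extensions $M_\infty S\to\Gamma S\to\Sigma S$ and $M_\infty S\to\Gamma'S\to\Sigma'S$ with $S=R[t_1,\dots,t_m]$, and using naturality in the inclusion $R\to R[t_1,\dots,t_m]$, I obtain a commuting square
\[
\begin{CD}
K_1(\Sigma R) @>\cong>> K_0(R)\\
@VVV @VVV\\
K_1((\Sigma R)[t_1,\dots,t_m]) @>\cong>> K_0(R[t_1,\dots,t_m])
\end{CD}
\]
(and its analogue for $\Sigma'R$), whose horizontal maps are the boundary isomorphisms. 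Since a division algebra is $K_0$-regular, and in the other case $R$ is $K_0$-regular by hypothesis, the right vertical map is an isomorphism, hence so is the left one; thus $\Sigma R$ and $\Sigma'R$ are $K_1$-regular for every $m$.

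With the hypotheses verified, Theorem~\ref{intro:kklift} yields isomorphisms of monoids $\cExt(L(E),R)\setminus\{0\}\iso kk(L(E),\Sigma R)$ and $\cExt(L(E),R)_f\setminus\{0\}\iso kk(L(E),\Sigma'R)$, each exhibiting the target as the group completion of the corresponding extension monoid. The inclusion $\Sigma'R\hookrightarrow\Sigma R$ gives a morphism of the two extensions which is the identity on $M_\infty R$; since $\Gamma'R$ and $\Gamma R$ are flasque, hence $kk$-contractible, the induced map $j(\Sigma'R)\to j(\Sigma R)$ is an isomorphism, so $kk(L(E),\Sigma'R)\iso kk(L(E),\Sigma R)=kk_{-1}(L(E),R)$. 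Assembling these into the square whose bottom row is made of these $kk$-isomorphisms and whose left and middle vertical arrows are the isomorphisms of Theorem~\ref{intro:kklift}, one reads off that the canonical maps $\cExt(L(E),R)_f\to\cExt(L(E),R)\to kk_{-1}(L(E),R)$ are isomorphisms (of the associated groups of nonzero classes, equivalently of the group completions).

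Finally, for the \emph{moreover} clause I would use that, by Theorem~\ref{intro:kklift}, the nonzero homotopy classes \emph{already} form the group $kk_{-1}(L(E),R)$, with no formal completion required. Thus every nonzero element of each of the three groups is, under the isomorphisms above, the class of a genuine nonzero homomorphism $\tau:L(E)\to\Sigma R$ (resp. $\Sigma'R$). By \cite{dwkk}*{Lemma 2.8} such a $\tau$ classifies an honest extension of $L(E)$ by $M_\infty R$; because $L(E)$ is simple, $\tau$ is injective, and since its class in $\cExt(L(E),R)$ is nonzero the extension is not $M_2$-homotopic to the trivial extension, i.e.\ it is nontrivial.
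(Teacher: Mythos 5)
Your proposal is correct and follows essentially the same route as the paper: establish via Proposition~\ref{prop:wagopis} that $\Sigma R$ and $\Sigma'R$ are purely infinite simple unital algebras, verify the needed regularity of the suspensions from the $K_0$-regularity of $R$, and then apply Theorem~\ref{thm:kkliftr} (resp.\ Proposition~\ref{prop:homotopis} when $L(E)\cong M_n$), together with \cite{dwkk}*{Lemma 2.8} for the nontriviality of the extensions classified by nonzero classes. The only point to polish is that $(\Sigma R)[t_1,\dots,t_m]$ is not literally $\Sigma(R[t_1,\dots,t_m])$; the relevant extension should be obtained by tensoring $M_\infty R\to\Gamma R\to\Sigma R$ with $\ell[t_1,\dots,t_m]$ (flatness of the polynomial ring), after which $(\Gamma R)[t_1,\dots,t_m]$ inherits the infinite sum ring structure and your boundary-map square is exactly as displayed.
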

\begin{proof} 
Since $\ell$ is a field, $\Sigma$ and $\Sigma'$ are models for the suspension functor. By Proposition \ref{prop:wagopis}, $\Sigma R$ and $\Sigma ' R$ are purely infinite simple. Now apply Theorem \ref{thm:kkliftr} to prove the first assertion. The second assertion follows from Theorem \ref{thm:kkliftr} and \cite{dwkk}*{Lemma 2.8}.
\end{proof}

\begin{coro}\label{coro:ckext}[cf. \cite{ck}*{Theorem 5.3}]
For $E$ as in the theorem above, we have
\[
\cExt(L(E),\ell)=\coker(I-A_E).
\] 
\end{coro}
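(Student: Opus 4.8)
The plan is to read the formula off directly from Theorem~\ref{thm:ext}. Since $\ell$ is a field it is in particular a division algebra, so Theorem~\ref{thm:ext} applies with $R=\ell$ and gives a natural isomorphism $\cExt(L(E),\ell)\cong kk_{-1}(L(E),\ell)$. Thus the whole corollary reduces to the computation of the single bivariant group $kk_{-1}(L(E),\ell)$, and it suffices to exhibit an isomorphism $kk_{-1}(L(E),\ell)\cong\coker(I-A_E)$.

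To compute this group I would use the fundamental distinguished triangle attached to $L(E)$ in the triangulated category $kk$ (the algebraic counterpart of the Cuntz--Krieger / Pimsner--Voiculescu sequence) established in \cite{dwkk}, namely
\[
\Omega\, j(L(E))\to j(\ell^{\reg(E)})\xrightarrow{(I-A_E)^t} j(\ell^{E^0})\to j(L(E)).
\]
That the connecting map is $(I-A_E)^t$, with the transpose on this side, is fixed by the requirement that $kk_*(\ell,-)$ applied to the triangle reproduce the standard computation $K_0(L(E))=\coker((I-A_E)^t)$. Now apply the contravariant cohomological functor $kk(-,\ell)$. Because $kk$ is triangulated this turns the triangle into a long exact sequence; because $kk(-,\ell)$ is additive and $\reg(E),E^0$ are finite, one has $kk_n(\ell^S,\ell)=\bigoplus_S kk_n(\ell,\ell)$ for $S\in\{\reg(E),E^0\}$, with $kk_n(\ell,\ell)=K_n(\ell)$ since $\ell$ is regular, and dualising interchanges the connecting morphism $(I-A_E)^t$ with its transpose $I-A_E$. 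The relevant segment of the long exact sequence is therefore
\[
kk_0(\ell^{E^0},\ell)\xrightarrow{I-A_E} kk_0(\ell^{\reg(E)},\ell)\to kk_{-1}(L(E),\ell)\to kk_{-1}(\ell^{E^0},\ell).
\]
Here $kk_0(\ell^{E^0},\ell)=\Z^{E^0}$ and $kk_0(\ell^{\reg(E)},\ell)=\Z^{\reg(E)}$, while the final term is $\bigoplus_{E^0}K_{-1}(\ell)=0$ because the negative $K$-theory of a field vanishes. Exactness then yields $kk_{-1}(L(E),\ell)=\coker(I-A_E)$, which combined with the first paragraph proves the corollary.

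The main obstacle is bookkeeping rather than conceptual. First, one must pin down the precise form of the fundamental triangle in \cite{dwkk}: it is likely stated through the edge matrix $I-B_E^t$ of Section~\ref{sec:k0k1lift} and must be transported to the reduced vertex matrix $I-A_E^t$ along the isomorphism $s^*\colon\ker(I-A_E^t)\iso\ker(I-B_E^t)$ (and, more to the point, along the corresponding identification of the whole triangle). Second, and this is the delicate step, one has to track carefully how the contravariant functor $kk(-,\ell)$ simultaneously transposes the integral matrix $(I-A_E)^t$ into $I-A_E$ and shifts the degree, so that the cokernel genuinely lands in $kk_{-1}$ and not in an adjacent degree; the only honest nonformal input beyond this manipulation is the vanishing $K_{-1}(\ell)=0$, which is immediate for a field.
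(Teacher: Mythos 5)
Your argument is correct and follows essentially the same route as the paper: both reduce the corollary via Theorem~\ref{thm:ext} to the single computation $kk_{-1}(L(E),\ell)\cong\coker(I-A_E)$. The only difference is that the paper simply quotes this computation as $KH^1(L(E))=\coker(I-A_E)$ from \cite{dwkk}*{Formula 6.4}, whereas you re-derive it from the fundamental triangle and the vanishing of $K_{-1}(\ell)$ --- which is exactly the argument underlying the cited formula.
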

\begin{proof} Immediate from Theorem \ref{thm:ext} and the the fact that $KH^1(L(E))=\coker(I-A_E)$ \cite{dwkk}*{Formula 6.4}.  
\end{proof}

\begin{coro}\label{coro:cute}
Let $E$ and $R$ be as in Theorem \ref{thm:ext}. Then there is an exact sequence
\begin{multline*}
0\to\Ext^1_\Z(K_0(L(E)),K_0(R))\to \cExt(L(E),R)\to\\
\Hom_\Z(\ker(I-A_E^t),K_0(R))\oplus\Hom_\Z(K_0(L(E)),K_{-1}R)\to 0.
\end{multline*}
\end{coro}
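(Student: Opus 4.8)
The plan is to reduce the computation to bivariant $K$-theory and then read off the sequence from a universal-coefficient argument. By Theorem \ref{thm:ext} the canonical map is an isomorphism $\cExt(L(E),R)\cong kk_{-1}(L(E),R)$, so it suffices to produce the stated sequence with $kk_{-1}(L(E),R)$ in the middle term. The engine is the fundamental distinguished triangle of \cite{dwkk}, which exhibits $j(L(E))$ as the mapping cone of the map $I-A_E^t\colon j(\ell^{\reg(E)})\to j(\ell^{E^0})$ induced by the reduced, transposed incidence matrix (so that $\coker(I-A_E^t)=K_0(L(E))$ and $\ker(I-A_E^t)$ is as in \eqref{map:elonto}). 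First I would apply the cohomological functor $kk_{-1}(-,R)$ to this triangle and use that $kk_n(\ell,R)=KH_n(R)$.

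Next I would pin down the coefficient groups. Since $R$ is $K_0$-regular it is also $K_{-1}$-regular by Vorst's theorem \cite{vorst}, so $KH_0(R)=K_0(R)$ and $KH_{-1}(R)=K_{-1}(R)$ (and $K_{-1}(R)=0$ when $R$ is a division algebra). With these identifications the long exact sequence of the triangle collapses, because the relevant map $I-A_E^t$ has zero contribution in the remaining degrees, into the short exact sequence
\begin{multline*}
0\to \coker\bigl(I-A_E\colon K_0(R)^{E^0}\to K_0(R)^{\reg(E)}\bigr)\to kk_{-1}(L(E),R)\\
\to \ker\bigl(I-A_E\colon K_{-1}(R)^{E^0}\to K_{-1}(R)^{\reg(E)}\bigr)\to 0,
\end{multline*}
where both maps are those induced contravariantly by $I-A_E^t$, i.e.\ given by its transpose $I-A_E$. (As a sanity check, taking $R=\ell$ recovers $\coker(I-A_E)$, matching Corollary \ref{coro:ckext}.)

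The remaining step rewrites the outer terms in $\Hom$/$\Ext$ language. I would use the complex of free abelian groups $C_\bullet=\bigl[\,\Z^{\reg(E)}\xrightarrow{I-A_E^t}\Z^{E^0}\,\bigr]$, placed in degrees $1$ and $0$, with $H_1(C_\bullet)=\ker(I-A_E^t)$ and $H_0(C_\bullet)=K_0(L(E))$. Applying the classical integral universal coefficient theorem for $\Hom$ and $\Ext$ to $C_\bullet$ (legitimate since $C_\bullet$ consists of free groups) identifies the kernel term above with $\Hom_\Z(K_0(L(E)),K_{-1}R)$, and exhibits the cokernel term as a natural extension of $\Hom_\Z(\ker(I-A_E^t),K_0(R))$ by $\Ext^1_\Z(K_0(L(E)),K_0(R))$. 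Splicing these and observing that the two $\Hom$-groups arise from the two \emph{different} cells of $C_\bullet$ (the degree-$0$ cell $\ell^{E^0}$ pairing $K_0(L(E))$ with $K_{-1}R$, the degree-$1$ cell $\ell^{\reg(E)}$ pairing $\ker(I-A_E^t)$ with $K_0R$ and, after an $\Ext$-shift, $K_0(L(E))$ with $K_0R$) shows that the quotient of $kk_{-1}(L(E),R)$ by $\Ext^1_\Z(K_0(L(E)),K_0(R))$ is the direct sum, not merely an extension, yielding the asserted sequence with all maps natural.

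The hard part is the degree bookkeeping: one must verify that, after the suspension implicit in passing from $kk$ to $kk_{-1}$, the summand $K_0R$ attaches to $\ker(I-A_E^t)$ (and produces the $\Ext^1$ subgroup) while $K_{-1}R$ attaches to $K_0(L(E))$. This is precisely the degree-shifted analogue of the universal-coefficient computation of \cite{dwkk}*{Theorem 7.11} that underlies the proofs of Theorems \ref{thm:ror} and \ref{thm:kkliftr}; indeed, a cleaner route is to apply that degree-$0$ computation directly to the coefficient algebra $\Sigma R$ and then use $KH_n(\Sigma R)\cong KH_{n-1}(R)$ together with the regularity of $R$ to rewrite the coefficients, which automatically delivers the sequence in the form stated (compare the invariants $K_0(\xi)$ and $K_1(\xi)\gamma$ used there, which are exactly the two $\Hom$-summands one degree up). The only point requiring care beyond this is the splitting into a direct sum, which I would justify by naturality in $R$ and the grading by cellular degree rather than by exhibiting an explicit section.
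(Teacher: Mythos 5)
Your proposal is correct and takes essentially the same route as the paper, whose proof is literally ``immediate from Theorem \ref{thm:ext} and \cite{dwkk}*{Corollary 7.19}'': identify $\cExt(L(E),R)$ with $kk_{-1}(L(E),R)$ and invoke the universal-coefficient sequence of \cite{dwkk}. Your longer hand-derivation from the mapping-cone triangle is sound up to the final point you flag yourself --- the direct-sum splitting of the quotient is not formal from the filtration and the classical UCT alone, but comes from the explicit section $\gamma$ of \eqref{map:elonto} already packaged into \cite{dwkk}*{Corollary 7.19}, which is exactly the ``cleaner route'' you indicate at the end.
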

\begin{proof} Immediate from Theorem \ref{thm:ext} and \cite{dwkk}*{Corollary 7.19}.
\end{proof}

\begin{ex}\label{ex:cute}
If $R$ is either $\ell$ or a purely infinite simple unital Leavitt path algebra, then $K_{-1}R=0$, so the exact sequence
of Corollary \ref{coro:cute} becomes
\[
0\to\Ext^1_\Z(K_0(L(E)),K_0(R))\to \cExt(L(E),R)\to
\Hom_\Z(\ker(I-A_E^t),K_0(R))\to 0.
\]
If furthermore $K_0(L(E))$ is torsion, then $\ker(I-A_E^t)=0$, and we get a canonical isomorphism
\[
\cExt(L(E),R)=\Ext^1_\Z(K_0(L(E)),K_0(R)).
\]
\end{ex}

\section{Maps into tensor products with \topdf{$L_2$}{L2}}\label{sec:tenso}

\begin{lem}\label{lem:homozero} Let $E$ be a graph and let $\phi:L(E)\to R$ be an algebra homomorphism. Then $\phi\simh 0\iff \phi=0$.
\end{lem}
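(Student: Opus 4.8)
The backward implication is immediate: if $\phi=0$ then the constant homotopy witnesses $\phi\simh 0$. The content is the forward implication, and the plan is to reduce everything to the behaviour of $\phi$ on the vertices of $E$, which are idempotents, and then invoke Lemma \ref{lem:idemzero}.

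First I would record the two structural facts I intend to use. Each vertex $v\in E^0$ is an idempotent of $L(E)$, and the vertices serve as local units: every element of $L(E)$ is an $\ell$-linear combination of monomials $\alpha\beta^*$ with $\alpha,\beta$ paths satisfying $r(\alpha)=r(\beta)$, and each such monomial obeys $s(\alpha)\,\alpha\beta^*=\alpha\beta^*$. Consequently, if I can show $\phi(v)=0$ for every $v\in E^0$, then for each monomial $\phi(\alpha\beta^*)=\phi(s(\alpha))\,\phi(\alpha\beta^*)=0$, and hence $\phi=0$ by linearity. Thus the whole problem reduces to proving $\phi(v)=0$ for all $v$.

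Now suppose $\phi\simh 0$, witnessed by an algebra homomorphism $H:L(E)\to R[t]$ with $\ev_0 H=\phi$ and $\ev_1 H=0$. Fix $v\in E^0$. Since $v^2=v$ and $H$ is multiplicative, $H(v)\in\Idem_1(R[t])$, with $(\ev_0 H)(v)=\phi(v)$ and $(\ev_1 H)(v)=0$. Composing $H$ with the unital inclusion $\ell\cong\ell v\hookrightarrow L(E)$, $1\mapsto v$, therefore exhibits a polynomial homotopy between the homomorphism $\ell\to R$ classified by the idempotent $\phi(v)$ and the zero homomorphism; that is, $\phi(v)\simh 0$ in the sense of Lemma \ref{lem:idemzero}. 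By that lemma, $\phi(v)=0$. As $v$ was arbitrary, the reduction of the preceding paragraph yields $\phi=0$, completing the proof.

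I do not expect a genuine obstacle here: once the rigidity of idempotents under polynomial homotopy is isolated in Lemma \ref{lem:idemzero}, the only remaining work is the routine reduction to vertices. The single point deserving minor care is that $L(E)$ need not be unital (when $E^0$ is infinite), so the final step should be carried out monomial by monomial, or equivalently using that each element of $L(E)$ is fixed by multiplication by a suitable finite sum of distinct vertices, rather than by appeal to a global identity.
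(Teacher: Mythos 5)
Your proof is correct and follows essentially the same route as the paper: reduce to the vertices, which generate $L(E)$ as local units, and apply the rigidity of idempotents under polynomial homotopy from Lemma \ref{lem:idemzero}. The paper's own proof is just a two-line version of this same argument.
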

\begin{proof} It suffices to show that if $H:L(E)\to R[t]$ satisfies $\ev_0H=0$, and $v\in E^0$, then $H(v)=0$. This follows from Lemma \ref{lem:idemzero}.
\end{proof}

A unital algebra $R$ is \emph{regular supercoherent} if for every $n\ge 0$, $R[t_1, \dots, t_n]$ is regular coherent in the sense of \cite{gersten}.

\begin{lem}\label{lem:kreg} 
Let $E$ be graph and $R$ a regular supercoherent algebra. Then $L(E)\otimes R$ is $K$-regular. In particular, $L(E) \otimes L(F)$ is $K$-regular for every finite graph $F$.
\end{lem}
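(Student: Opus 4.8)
The plan is to reduce to finite graphs and then argue through regular supercoherence. First I would reduce to the case in which $E$ is finite. Any Leavitt path algebra is the filtered colimit of the Leavitt path algebras $L(E_i)$ of its finite complete subgraphs $E_i$ (see \cite{libro}); since $-\otimes R$ and $-[t_1,\dots,t_m]$ both commute with filtered colimits, and $K_n$, hence each iterated $NK$-functor, commutes with filtered colimits of rings, $K$-regularity passes to filtered colimits. Thus it suffices to show that $L(E)\otimes R$ is $K$-regular when $E$ is finite.

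For finite $E$, the algebra $L(E)$ is regular supercoherent by \cite{libro}*{Lemma 6.4.16}, and $R$ is regular supercoherent by hypothesis. I would then prove that $L(E)\otimes R$ is again regular supercoherent, i.e. that $(L(E)\otimes R)[t_1,\dots,t_m]=L(E)\otimes R[t_1,\dots,t_m]$ is regular coherent for every $m$. Granting this, the lemma follows at once, because a regular supercoherent ring is $K_n$-regular for all $n$; this is the mechanism of \cite{gersten} underlying \cite{dwkk}*{Example 5.5}, applied now to the tensor product rather than to $L(E)$ alone.

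The main obstacle is precisely this preservation of regular supercoherence under $-\otimes R$. I would not attempt a general closure theorem for tensor products of regular coherent $\ell$-algebras, which is too much to hope for. Instead I would revisit the proof of \cite{libro}*{Lemma 6.4.16}: that argument builds $L(E)$ from the coefficient ring $\ell$ by finitely many operations (matrix amplifications, passage to corners, and universal localization along the graph data, in the sense that $L(E)$ is a universal localization of the Cohn algebra) under which regular coherence is known to be preserved, and it produces finite resolutions by finitely generated projectives functorially in the coefficients. Carrying the same construction out with $R[t_1,\dots,t_m]$ in place of $\ell$ as coefficient ring uses only that each $R[t_1,\dots,t_m]$ is regular coherent, which is exactly the assumption that $R$ is regular supercoherent; this yields regular coherence of $L(E)\otimes R[t_1,\dots,t_m]$ for all $m$, as required.

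Finally, for the last assertion I would apply the first part with $R=L(F)$: since $F$ is finite, $L(F)$ is regular supercoherent by \cite{libro}*{Lemma 6.4.16}, so $L(E)\otimes L(F)$ is $K$-regular.
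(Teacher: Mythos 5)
Your argument has a genuine gap at its central step. Everything hinges on the claim that $L(E)\otimes R[t_1,\dots,t_m]$ is regular coherent, and the justification offered --- that the proof of \cite{libro}*{Lemma 6.4.16} ``builds $L(E)$ from the coefficient ring by operations under which regular coherence is known to be preserved'' --- does not hold up. That proof is tied to field coefficients: the path algebra of a finite graph over $\ell$ is a tensor algebra over a semisimple ring, hence hereditary, and one then invokes the fact that universal localization preserves hereditariness. Over a general regular supercoherent coefficient ring the path algebra is no longer hereditary, universal localization is not known to preserve regular coherence in this generality, and no reference you cite (nor \cite{abc}, which works at the level of $K$-theory long exact sequences rather than coherence of the localized ring) establishes that $L_S(E)=L(E)\otimes S$ is regular coherent for regular coherent $S$. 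So ``carrying the same construction out with $R[t_1,\dots,t_m]$ in place of $\ell$'' is precisely the assertion that needs a proof, and none is given. The reduction to finite $E$ via filtered colimits is fine but does not touch this difficulty.

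The paper avoids the issue entirely and never proves (or needs) coherence of the tensor product. It uses \cite{dwkk}*{Example 5.5}, which gives $K_*(S\otimes L(E))\cong KH_*(S\otimes L(E))$ whenever $S$ is regular supercoherent. Since $R$ is regular supercoherent, each $R_n=R[t_1,\dots,t_n]$ is regular supercoherent by definition, so $K_*(R_n\otimes L(E))\cong KH_*(R_n\otimes L(E))=KH_*(R\otimes L(E))$ by homotopy invariance of $KH$; hence $K_*(R\otimes L(E))\to K_*\bigl((R\otimes L(E))[t_1,\dots,t_n]\bigr)$ is an isomorphism for all $n$. If you want to salvage your route, you would have to supply an actual proof that regular supercoherence passes to $-\otimes L(E)$; otherwise the $KH$ mechanism is the correct and much shorter path.
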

\begin{proof} 

By definition, $R_n= R[t_1,\dots,t_n]$ is regular supercoherent for every $n\ge 0$. Hence by \cite{dwkk}*{Example 5.5} the canonical map $K_*(R_n\otimes L(E))\iso KH_*(R_n\otimes L(E))=KH_*(R_0\otimes L(E))$ is an isomorphism for every $n$, whence also $K_*(R_0\otimes L(E))\to K_*(R_n\otimes L(E))$ is an isomorphism for all $n$. The second assertion follows from the first, using \cite{libro}*{Lemma 6.4.16}.
\end{proof}

Let $R,S$ be unital algebras. Put
\[
[R,S]\supset [R,S]_1=\{[f]: f \text{ unital }\}.
\]

\begin{thm}\label{thm:mapl2}
Let $E$ be finite graph such that $L(E)$ is simple and $R$ a purely infinite simple regular supercoherent algebra. Then $[L(E),L_2]_1=[L(E),L_2]_{M_2}\setminus\{0\}$, $[L(E),R\otimes L_2]_1=[L(E),R\otimes L_2]_{M_2}$, and both sets have exactly one element each.  
\end{thm}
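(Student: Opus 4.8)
The plan is to deduce the whole statement from two facts about each of the two targets $T\in\{L_2,\ R\otimes L_2\}$: that $T$ is purely infinite simple, unital and $K_1$-regular, and that $kk(L(E),T)=0$. Granting these, I feed $T$ into the machinery already in place. When $L(E)$ is purely infinite simple I apply Theorem \ref{thm:kkliftr}; when it is not, Remark \ref{rem:simpnopis} gives $L(E)\cong M_n$ and I apply Proposition \ref{prop:homotopis} instead. In both cases the canonical map $[L(E),T]_{M_2}\setminus\{0\}\iso kk(L(E),T)=0$ is a bijection, so this monoid has exactly one element; the remaining work is to match that element with a unital class.

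First the $kk$-vanishing. The graph $\cR_2$ has incidence matrix $(2)$, so $I-A_{\cR_2}^t=(-1)$ and hence $\coker(I-A_{\cR_2}^t)=0=\ker(I-A_{\cR_2}^t)$; thus $K_0(L_2)=0$, and by the exact sequence of Remark \ref{rem:xi0detxi1} also $K_1(L_2)=K_0(L_2)\otimes K_1(\ell)=0$. Plugging $K_0(L_2)=K_1(L_2)=0$ into the universal coefficient sequence \cite{dwkk}*{Corollary 7.19} gives $kk(L_2,L_2)=0$; as $kk$ is additive this says $j(L_2)$ is a zero object. Since $j:\aha\to kk$ is compatible with tensor products (\cite{kkwt}), $j(R\otimes_\ell L_2)\cong j(R)\otimes j(L_2)$ is then a zero object as well. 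Consequently $kk(L(E),L_2)=kk(L(E),R\otimes L_2)=0$, and moreover $K_*(R\otimes L_2)=0$.

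Next, the algebra $R\otimes L_2$. Its $K_1$-regularity (indeed $K$-regularity) is Lemma \ref{lem:kreg} applied to the graph $\cR_2$. For simplicity and pure infiniteness I argue as follows. Since $L_2$ is simple with center $\ell$ and $R$ is simple, the standard central-simple-factor argument (every two-sided ideal of $R\otimes_\ell L_2$ is forced to have the form $R\otimes_\ell J$) shows $R\otimes_\ell L_2$ is simple. It is not a division ring, and its unit is properly infinite: writing $x_1,x_2,y_1,y_2\in L_2$ for the canonical generators, the elements $1\otimes x_i,\,1\otimes y_i$ satisfy $(1\otimes y_i)(1\otimes x_j)=\delta_{ij}$ and $(1\otimes x_1)(1\otimes y_1)+(1\otimes x_2)(1\otimes y_2)=1$, so $1$ is the sum of two orthogonal idempotents each MvN-equivalent to $1$. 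By the characterization of purely infinite simple unital rings in \cite{agp} this makes $R\otimes L_2$ purely infinite simple. This structural step is the main obstacle: without it neither Theorem \ref{thm:kkliftr} nor Proposition \ref{prop:homotopis} can be applied to this target.

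Finally I pass from the singleton to the unital classes. Fix $T\in\{L_2,\ R\otimes L_2\}$. The unique class in $[L(E),T]_{M_2}\setminus\{0\}$ corresponds under $j$ to $0\in kk(L(E),T)$, which is unital because $[1_T]=0$ in $K_0(T)=0$; hence by Theorem \ref{thm:kkliftr}(i) (respectively Proposition \ref{prop:homotopis} when $L(E)\cong M_n$, together with properly infinite $1_T$ to produce a unital map) it is represented by a unital homomorphism, so $[L(E),T]_1\neq\emptyset$. Any two unital homomorphisms $L(E)\to T$ have the same image $0$ in $kk$, hence are unitally $\ad$-homotopic by Theorem \ref{thm:kkliftr}(ii). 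I upgrade this to genuine homotopy using $\pi_0(U(T))=K_1(T)=0$ (Corollary \ref{coro:kv1pis}): every unit of $T$ lies in $U(T)^0$, so conjugation by it is a homotopy, and thus the type-(a) steps of Remark \ref{rem:adhomo} are homotopies just as the type-(b) steps are. Therefore $[L(E),T]_1$ is a single class; since unital maps are nonzero and the zero class is always separate by Lemma \ref{lem:idemzero}, this class maps bijectively onto the unique element of $[L(E),T]_{M_2}\setminus\{0\}$. Running the argument for $T=L_2$ and $T=R\otimes L_2$ yields both equalities, each side a one-element set.
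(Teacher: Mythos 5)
Your overall strategy coincides with the paper's: show each target $T$ is purely infinite simple, unital and $K$-regular with $kk(L(E),T)=0$, then invoke Theorem \ref{thm:kkliftr} (resp.\ Proposition \ref{prop:homotopis} when $L(E)\cong M_n$), and upgrade unital $\ad$-homotopy to genuine homotopy using $\pi_0(U(T))=K_1(T)=0$ via Corollary \ref{coro:kv1pis}. All of that is sound. The genuine gap sits exactly at the step you yourself flag as the main obstacle: pure infiniteness of $R\otimes L_2$. You deduce it from ``simple, not a division ring, and the unit is properly infinite,'' appealing to the characterization in \cite{agp}. But that characterization reads: not a division ring \emph{and} for every nonzero $x$ there exist $a,b$ with $axb=1$. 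Simplicity only yields $\sum_i a_ixb_i=1$ for some finite family, and a properly infinite unit does not let you collapse the sum to a single term (the naive choice $a=\sum_i t_ia_i$, $b=\sum_j b_js_j$ produces cross terms, because the isometries $s_i\in 1\otimes L_2$ do not commute with the $a_i,b_i\in R\otimes L_2$). Indeed ``simple $+$ not a division ring $+$ properly infinite unit'' does not imply purely infinite simple: this is essentially the content of R\o rdam's example of a simple unital $C^*$-algebra containing both a finite and an infinite projection. Pure infiniteness of $R\otimes L_2$ is a genuine theorem about tensoring with $L_2$; the paper imports it from \cite{apgpsm}*{Theorem 7.9}, and some such input is unavoidable here.

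Two smaller points. Simplicity of $R\otimes L_2$ requires that $L_2$ be \emph{central} simple; you assert centrality without justification (the paper cites \cite{apcrow}*{Theorem 4.2}). And in the uniqueness step for unital maps you invoke Theorem \ref{thm:kkliftr}(ii) across the board, but when $L(E)\cong M_n$ that theorem does not apply; there you must argue via Proposition \ref{prop:homotopis} (unital maps with equal $K_0$ are conjugate, hence homotopic since $\pi_0(U(T))=0$), as the paper does and as you set up but do not carry out. With the \cite{apgpsm} and \cite{apcrow} inputs restored, the remainder of your argument goes through and is essentially the paper's proof.
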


\begin{proof} By Remark \ref{rem:simpnopis}, Proposition \ref{prop:homotopis} and Theorem \ref{intro:kklift}, $[L(E),L_2]_{M_2}\setminus\{0\}$ has exactly one element, since $j(L_2)=0$ in $kk$; by Corollary \ref{coro:tododentro} this element is the class of a unital homomorphism. Next let $\phi,\psi:L(E)\to L_2$ be unital homomorphisms. If $L(E)$ is not purely infinite, then by Proposition \ref{prop:homotopis}, $\phi$ and $\psi$ are conjugate, and therefore homotopic, since by Corollary \ref{coro:kv1pis}, $ \pi_0(U(L_2)) = K_1(L_2) = 0$. If $L(E)$ is purely infinite, then by part iii) of Theorem \ref{thm:kkliftr}, $\phi$ and $\psi$ are $3$-step unitally ad-homotopic. Hence by Remark \ref{rem:adhomo} and the argument we have just used, $\phi\simh\psi$. Thus the assertions about homomorphisms $L(E)\to L_2$ are proved. It is well-known that the tensor product of a unital simple algebra with a unital central simple algebra is again simple. By \cite{apcrow}*{Theorem 4.2}, $L_2$ is central, so $R\otimes L_2$ is simple. Moreover, $R\otimes L_2$ is purely infinite by \cite{apgpsm}*{Theorem 7.9}. Hence using that $j(R\otimes L_2)=0$ in $kk$ and applying Lemmas  \ref{lem:homozero} and \ref{lem:kreg}, Proposition \ref{prop:homotopis} and Theorem \ref{thm:kkliftr}, we obtain
\[
[L(E),R\otimes L_2]_{M_2}\setminus\{0\} = kk(L(E),R\otimes L_2)=0.
\] 
By Corollary \ref{coro:tododentro} there is a unital homomorphism $\phi:L(E)\to L(F)\otimes L_2$. If $\psi$ is another, then $\phi\simh\psi$ by Lemma \ref{lem:adxy} and the argument above.   
\end{proof}

\begin{ex}\label{ex:otimesl2} 
Let $R$ be as in Theorem \ref{thm:mapl2}, let $d:L_2\to R \otimes L_2$, $a\mapsto 1\otimes a$ and let $\phi:L_2\to R\otimes L_2$ be any homomorphism. Setting $L(E)=L_2$ in Theorem \ref{thm:mapl2} we get that if $\phi$ is nonzero then it is $M_2$-homotopic to $d$ and that if $\phi$ is unital then it is homotopic to $d$.
\end{ex}

\begin{bibdiv}
\begin{biblist}


\bib{libro}{book}{
author={Abrams, Gene},
author={Ara, Pere},
author={Siles Molina, Mercedes},
title={Leavitt path algebras}, 
date={2017},
series={Lecture Notes in Math.},
volume={2008},
publisher={Springer},
doi={$10.1007/978-1-4471-7344-1$},
}
\bib{aap}{article}{
   author={Abrams, G.},
   author={Aranda Pino, G.},
   title={The Leavitt path algebras of arbitrary graphs},
   journal={Houston J. Math.},
   volume={34},
   date={2008},
   number={2},
   pages={423--442},
   issn={0362-1588},
   review={\MR{2417402}},
}
 \bib{alps}{article}{
   author={Abrams, Gene},
   author={Louly, Adel},
   author={Pardo, Enrique},
   author={Smith, Christopher},
   title={Flow invariants in the classification of Leavitt path algebras},
   journal={J. Algebra},
   volume={333},
   date={2011},
   pages={202--231},
   issn={0021-8693},
   review={\MR{2785945}},
}

\bib{abc}{article}{
   author={Ara, Pere},
   author={Brustenga, Miquel},
   author={Corti\~nas, Guillermo},
   title={$K$-theory of Leavitt path algebras},
   journal={M\"unster J. Math.},
   volume={2},
   date={2009},
   pages={5--33},
   issn={1867-5778},
   review={\MR{2545605}},
}

\bib{agp}{article}{,
  title={$K_0$ of purely infinite simple regular rings},
  author={P. Ara},
  author={K. Goodearl},
  author={E. Pardo},
  journal={K-theory},
  volume={26},
  number={1},
  pages={69--100},
  year={2002},
  publisher={Springer}
}  
\bib{apcrow}{article}{
 author={Aranda Pino, Gonzalo},
   author={Crow, Kathi},
   title={The center of a Leavitt path algebra},
   journal={Rev. Mat. Iberoam.},
   volume={27},
   date={2011},
   number={2},
   pages={621--644},
   issn={0213-2230},
   review={\MR{2848533}},
   doi={10.4171/RMI/649},
}     
\bib{apgpsm}{article}{
author={Aranda Pino, G.},
   author={Goodearl, K. R.},
   author={Perera, F.},
   author={Siles Molina, M.},
   title={Non-simple purely infinite rings},
   journal={Amer. J. Math.},
   volume={132},
   date={2010},
   number={3},
   pages={563--610},
   issn={0002-9327},
   review={\MR{2666902}},
   doi={10.1353/ajm.0.0119},
}      

\bib{black}{article}{
    AUTHOR = {Blackadar, Bruce},
     TITLE = {{$K$}-theory for operator algebras},
    SERIES = {Mathematical Sciences Research Institute Publications},
    VOLUME = {5},
 PUBLISHER = {Springer-Verlag, New York},
      YEAR = {1986},
     PAGES = {viii+338},
      ISBN = {0-387-96391-X},
  review = {\MR{859867}},
       URL = {http://dx.doi.org/10.1007/978-1-4613-9572-0},
}
\bib{brosore}{article}{
author={Brownlowe, Nathan},
author={S\o rensen, Adam Peder Wie},
title={Leavitt $R$-algebras over countable graphs embed into $L_(2,R)$},
journal={J. Algebra},
volume={454},
   date={2007},
   pages={334--356},
   issn={0021-8693},
   review={\MR{3473431}},

}
\bib{www}{article}{
   author={Corti\~nas, Guillermo},
   title={Algebraic v. topological $K$-theory: a friendly match},
   conference={
      title={Topics in algebraic and topological $K$-theory},
   },
   book={
      series={Lecture Notes in Math.},
      volume={2008},
      publisher={Springer, Berlin},
   },
   date={2011},
   pages={103--165},
   review={\MR{2762555}},
}
\bib{dwkk}{article}{
author={Corti\~nas, Guillermo},
author={Montero, Diego},
title={Algebraic bivariant $K$-theory and Leavitt path algebras},
eprint={arXiv:1806.09204},
}
\bib{kkwt}{article}{
   author={Corti\~nas, Guillermo},
   author={Thom, Andreas},
   title={Bivariant algebraic $K$-theory},
   journal={J. Reine Angew. Math.},
   volume={610},
   date={2007},
   pages={71--123},
   issn={0075-4102},
   review={\MR{2359851}},
}
\bib{ck}{article}{
author={Cuntz, Joachim},
author={Krieger, Wolfgang},
title={A class of $C^*$-algebras and topological Markov chains},
journal={Inventiones Mathematicae}
}
\comment{
\bib{crr}{book}{
   author={Cuntz, Joachim},
   author={Meyer, Ralf},
   author={Rosenberg, Jonathan M.},
   title={Topological and bivariant $K$-theory},
   series={Oberwolfach Seminars},
   volume={36},
   publisher={Birkh\"auser Verlag, Basel},
   date={2007},
   pages={xii+262},
   isbn={978-3-7643-8398-5},
   review={\MR{2340673}},
}
}
\comment{
\bib{DT}{article}{
   author={Drinen, D.},
   author={Tomforde, M.},
   title={The $C^*$-algebras of arbitrary graphs},
   journal={Rocky Mountain J. Math.},
   volume={35},
   date={2005},
   number={1},
   pages={105--135},
   issn={0035-7596},
   review={\MR{2117597}},
   doi={10.1216/rmjm/1181069770},
}
}
\bib{gersten}{article}{
author={Gersten, Stephen M.},
title={{$K$}-theory of free rings},
journal={Comm. Algebra},
volume={1},
   date={1974},
   pages={39--64},
   issn={0092-7872},
   review={\MR{0396671}},

}
\bib{goodearl}{book}{
   author={Goodearl, K. R.},
   title={von Neumann regular rings},
   series={Monographs and Studies in Mathematics},
   volume={4},
   publisher={Pitman (Advanced Publishing Program), Boston, Mass.-London},
   date={1979},
   pages={xvii+369},
   isbn={0-273-08400-3},
   review={\MR{533669}},
}
\bib{kv}{article}{
   author={Karoubi, Max},
   author={Villamayor, Orlando},
   title={$K$-th\'eorie alg\'ebrique et $K$-th\'eorie topologique. I},
   language={French},
   journal={Math. Scand.},
   volume={28},
   date={1971},
   pages={265--307 (1972)},
   issn={0025-5521},
   review={\MR{0313360}},
   doi={10.7146/math.scand.a-11024},
}
\comment{
\bib{leav}{article}{
   author={Leavitt, W. G.},
   title={The module type of a ring},
   journal={Trans. Amer. Math. Soc.},
   volume={103},
   date={1962},
   pages={113--130},
   issn={0002-9947},
   review={\MR{0132764}},
}
}
\bib{m&m}{article}{
   author={Menal, Pere},
   author={Moncasi, Jaume},
   title={On regular rings with stable range $2$},
   journal={J. Pure Appl. Algebra},
   volume={24},
   date={1982},
   number={1},
   pages={25--40},
   issn={0022-4049},
   review={\MR{647578}},
   doi={10.1016/0022-4049(82)90056-1},
}
\comment{
\bib{neeman}{book}{
 author={Neeman, Amnon},
   title={Triangulated categories},
   series={Annals of Mathematics Studies},
   volume={148},
   publisher={Princeton University Press, Princeton, NJ},
   date={2001},
   pages={viii+449},
   isbn={0-691-08685-0},
   isbn={0-691-08686-9},
   review={\MR{1812507}},
}

\bib{emathesis}{thesis}{
author={Rodr{\'\i }guez Cirone, Emanuel},
title={Bivariant algebraic $K$-theory categories and a spectrum for $G$-equivariant bivariant algebraic $K$-theory},
type={PhD thesis},
address={Buenos Aires},
date={2017},
eprint={http://cms.dm.uba.ar/academico/carreras/doctorado/tesisRodriguez.pdf}
}
}
\bib{ror}{article}{
  title={Classification of Cuntz-Krieger algebras},
  author={R{\o}rdam, Mikael},
  journal={K-theory},
  volume={9},
  number={1},
  pages={31--58},
  year={1995},
  publisher={Springer}
}
\bib{robook}{book}{
   author={R\o rdam, M.},
   author={Larsen, F.},
   author={Laustsen, N.},
   title={An introduction to $K$-theory for $C^*$-algebras},
   series={London Mathematical Society Student Texts},
   volume={49},
   publisher={Cambridge University Press, Cambridge},
   date={2000},
   pages={xii+242},
   isbn={0-521-78334-8},
   isbn={0-521-78944-3},
   review={\MR{1783408}},
}

\bib{rosen}{book}{
   author={Rosenberg, Jonathan},
   title={Algebraic $K$-theory and its applications},
   series={Graduate Texts in Mathematics},
   volume={147},
   publisher={Springer-Verlag, New York},
   date={1994},
   pages={x+392},
   isbn={0-387-94248-3},
   review={\MR{1282290}},
}
\comment{
\bib{rt}{article}{
   author={Ruiz, Efren},
   author={Tomforde, Mark},
   title={Classification of unital simple Leavitt path algebras of infinite
   graphs},
   journal={J. Algebra},
   volume={384},
   date={2013},
   pages={45--83},
   issn={0021-8693},
   review={\MR{3045151}},
}
}
\bib{vorst}{article}{
author={Vorst, Ton},
title={Localization of the $K$-theory of polynomial extensions},
journal={Math. Ann.},
volume={244},
date={1979},
pages={33--43},
review={\MR{0550060}},
}

\bib{wagoner}{article}{
author={Wagoner, J. B.},
   title={Delooping classifying spaces in algebraic $K$-theory},
   journal={Topology},
   volume={11},
   date={1972},
   pages={349--370},
   issn={0040-9383},
   review={\MR{0354816}},
}

\bib{weih}{article}{
   author={Weibel, Charles A.},
   title={Homotopy algebraic $K$-theory},
   conference={
      title={Algebraic $K$-theory and algebraic number theory},
      address={Honolulu, HI},
      date={1987},
   },
   book={
      series={Contemp. Math.},
      volume={83},
      publisher={Amer. Math. Soc., Providence, RI},
   },
   date={1989},
   pages={461--488},
   review={\MR{991991}},
}

\end{biblist}
\end{bibdiv} 
\end{document}